\newtheorem{thm}{Theorem}[section]
\newtheorem{prop}[thm]{Proposition}
\newtheorem{lem}[thm]{Lemma}
\newtheorem{prop-def}{Proposition-Definition}[section]
\theoremstyle{definition}
\newtheorem{defn}[thm]{Definition}
\newtheorem{remark}[thm]{Remark}
\newtheorem{exam}[thm]{Example}
\newtheorem{Ques}[thm]{Question}
\newcommand{\nc}{\newcommand}
\nc{\delete}[1]{{}}
\nc{\mmargin}[1]{}
\nc{\mlabel}[1]{\label{#1}}  
\nc{\mcite}[1]{\cite{#1}}  
\nc{\mref}[1]{\ref{#1}}  
\nc{\mbibitem}[1]{\bibitem{#1}} 
	\nc{\mlabel}[1]{\label{#1}  
		{\hfill \hspace{1cm}{\bf{{\ }\hfill(#1)}}}}
	\nc{\mcite}[1]{\cite{#1}{{\bf{{\ }(#1)}}}}  
	\nc{\mref}[1]{\ref{#1}{{\bf{{\ }(#1)}}}}  
	\nc{\mbibitem}[1]{\bibitem[\bf #1]{#1}} 
 \font\cyrs=wncyr7
\newcommand{\bk}{{\mathbf{k}}}
\nc{\vep}{\varepsilon}
\nc{\bin}[2]{ (_{\stackrel{\scs{#1}}{\scs{#2}}})}  
\nc{\binc}[2]{(\!\! \begin{array}{c} \scs{#1}\\
		\scs{#2} \end{array}\!\!)}  
\nc{\bincc}[2]{  ( {\scs{#1} \atop
		\vspace{-1cm}\scs{#2}} )}  
\nc{\oline}[1]{\overline{#1}}
\nc{\mapm}[1]{\lfloor\!|{#1}|\!\rfloor}
\nc{\bs}{\bar{S}}
\nc{\la}{\longrightarrow}
\nc{\ot}{\otimes}
\nc{\rar}{\rightarrow}
\nc{\lon }{\,\rightarrow\,}
\nc{\dar}{\downarrow}
\nc{\dap}[1]{\downarrow \rlap{$\scriptstyle{#1}$}}
\nc{\defeq}{\stackrel{\rm def}{=}}
\nc{\dis}[1]{\displaystyle{#1}}
\nc{\dotcup}{\ \displaystyle{\bigcup^\bullet}\ }
\nc{\hcm}{\ \hat{,}\ }
\nc{\hts}{\hat{\otimes}}
\nc{\hcirc}{\hat{\circ}}
\nc{\lleft}{[}
\nc{\lright}{]}
\nc{\curlyl}{\left \{ \begin{array}{c} {} \\ {} \end{array}
	\right .  \!\!\!\!\!\!\!}
\nc{\curlyr}{ \!\!\!\!\!\!\!
	\left . \begin{array}{c} {} \\ {} \end{array}
	\right \} }
\nc{\longmid}{\left | \begin{array}{c} {} \\ {} \end{array}
	\right . \!\!\!\!\!\!\!}
\nc{\ora}[1]{\stackrel{#1}{\rar}}
\nc{\ola}[1]{\stackrel{#1}{\la}}
\nc{\scs}[1]{\scriptstyle{#1}} \nc{\mrm}[1]{{\rm #1}}
\nc{\dirlim}{\displaystyle{\lim_{\longrightarrow}}\,}
\nc{\invlim}{\displaystyle{\lim_{\longleftarrow}}\,}
\nc{\dislim}[1]{\displaystyle{\lim_{#1}}} \nc{\colim}{\mrm{colim}}
\nc{\mvp}{\vspace{0.3cm}} \nc{\tk}{^{(k)}} \nc{\tp}{^\prime}
\nc{\ttp}{^{\prime\prime}} \nc{\svp}{\vspace{2cm}}
\nc{\vp}{\vspace{8cm}}
\nc{\modg}[1]{\!<\!\!{#1}\!\!>}
\nc{\intg}[1]{F_C(#1)}
\nc{\lmodg}{\!<\!\!}
\nc{\rmodg}{\!\!>\!}
\nc{\cpi}{\widehat{\Pi}}
\nc{\ssha}{{\mbox{\cyrs X}}} 
\nc{\tsha}{{\mbox{\cyrt X}}}
\nc{\shpr}{\diamond}    
\nc{\labs}{\mid\!}
\nc{\rabs}{\!\mid}
 \nc{\zhx}{\text{-}}
\nc{\ad}{\mrm{ad}}
\nc{\ann}{\mrm{ann}}
\nc{\Aut}{\mrm{Aut}}
\nc{\Av}{\mrm{Av}}
\nc{\bim}{\mbox{-}\mathsf{Bimod}}
\nc{\br}{\mrm{bre}}
\nc{\can}{\mrm{can}}
\nc{\Cont}{\mrm{Cont}}
\nc{\rchar}{\mrm{char}}
\nc{\cok}{\mrm{coker}}
\nc{\db}{\mrm{db}}
\nc{\de}{\mrm{dep}}
\nc{\dgg}{\mrm{dgg}}
\nc{\dgp}{\mrm{dgp}}
\nc{\dgx}{\mrm{dgx}}
\nc{\Dif}{\mrm{Diff}}
\nc{\dtf}{{R-{\rm tf}}}
\nc{\dtor}{{R-{\rm tor}}}
\nc{\Div}{{\mrm Div}}
\nc{\Diff}{\mrm{DA}}
\nc{\Diffl}{\mathsf{DA}_\lambda}
\nc{\diffo}{{\mathsf{DO}_\lambda}}
\nc{\dl}{{\mathrm{PD}}}
\nc{\dRB}{{\mathrm{\Phi}_\mathsf{DRB}}}
\nc{\udRB}{{\mathrm{\Phi}_\mathsf{uDRB}}}
\nc{\OdRB}{{\mathrm{\Phi}_\mathsf{DRB}^0}}
 \nc{\OudRB}{{\mathrm{\Phi}_\mathsf{uDRB}^0}}
\nc{\inte}{{\mathrm{\Phi}_\mathsf{ID}}}
\nc{\uinte}{{\mathrm{\Phi}_\mathsf{uID}}}
\nc{\Ointe}{{\mathrm{\Phi}_\mathsf{ID}^0}}
 \nc{\Ouinte}{{\mathrm{\Phi}_\mathsf{uID}^0}}
\nc{\alg}{\mathsf{Alg}}
\nc{\End}{\mrm{End}}
\nc{\Ext}{\mrm{Ext}}
\nc{\Fil}{\mrm{Fil}}
\nc{\Fr}{\mrm{Fr}}
\nc{\Frob}{\mrm{Frob}}
\nc{\Gal}{\mrm{Gal}}
\nc{\GL}{\mrm{GL}}
\nc{\Hom}{\mrm{Hom}}
\nc{\Hoch}{\mrm{Hoch}}
\nc{\hsr}{\mrm{H}}
\nc{\hpol}{\mrm{HP}}
\nc{\id}{\mrm{id}}
\nc{\im}{\mrm{im}}
\nc{\Id}{\mrm{Id}}
\nc{\ID}{\mrm{ID}}
\nc{\Irr}{\mrm{Irr}}
\nc{\incl}{\mrm{incl}}
\nc{\length}{\mrm{length}}
\nc{\NLSW}{\mrm{NLSW}}
\nc{\Lie}{\mrm{Lie}}
\nc{\Nij}{\mrm{Nij}}
\nc{\mchar}{\rm char}
\nc{\mpart}{\mrm{part}}
\nc{\ql}{{\QQ_\ell}}
\nc{\qp}{{\QQ_p}}
\nc{\rank}{\mrm{rank}}
\nc{\rcot}{\mrm{cot}}
\nc{\rdef}{\mrm{def}}
\nc{\rdiv}{{\rm div}}
\nc{\Rey}{\mrm{Rey}}
\nc{\rtf}{{\rm tf}}
\nc{\rtor}{{\rm tor}}
\nc{\res}{\mrm{res}}
\nc{\SL}{\mrm{SL}}
\nc{\Spec}{\mrm{Spec}}
\nc{\tor}{\mrm{tor}}
\nc{\Tr}{\mrm{Tr}}
\nc{\tr}{\mrm{tr}}
\nc{\wt}{\mrm{wt}}
\def\ot{\otimes}
\nc{\udl}{{\mathrm{udl}}}
\nc{\bfk}{{\bf k}}
\nc{\bfone}{{\bf 1}}
\nc{\bfzero}{{\bf 0}}
\nc{\detail}{\marginpar{\bf More detail}
	\noindent{\bf Need more detail!}
	\svp}
\nc{\gap}{\marginpar{\bf Incomplete}\noindent{\bf Incomplete!!}
	\svp}
\nc{\FMod}{\mathbf{FMod}}
\nc{\Int}{\mathbf{Int}}
\nc{\remarks}{\noindent{\bf Remarks: }}
\nc{\ob}{\mathsf{Ob}}
\nc{\BA}{{\mathbb A}}   \nc{\CC}{{\mathbb C}}
\nc{\DD}{{\mathbb D}}   \nc{\EE}{{\mathbb E}}
\nc{\FF}{{\mathbb F}}   \nc{\GG}{{\mathbb G}}
\nc{\HH}{{\mathbb H}}   \nc{\LL}{{\mathbb L}}
\nc{\NN}{{\mathbb N}}   \nc{\PP}{{\mathbb P}}
\nc{\QQ}{{\mathbb Q}}   \nc{\RR}{{\mathbb R}}
\nc{\TT}{{\mathbb T}}   \nc{\VV}{{\mathbb V}}
\nc{\ZZ}{{\mathbb Z}}   \nc{\TP}{\widetilde{P}}
\nc{\cala}{{\mathcal A}}    \nc{\calc}{{\mathcal C}}
\nc{\cald}{\mathcal{D}}     \nc{\cale}{{\mathcal E}}
\nc{\calf}{{\mathcal F}}    \nc{\calg}{{\mathcal G}}
\nc{\calh}{{\mathcal H}}    \nc{\cali}{{\mathcal I}}
\nc{\call}{{\mathcal L}}    \nc{\calm}{{\mathcal M}}
\nc{\caln}{{\mathcal N}}    \nc{\calo}{{\mathcal O}}
\nc{\calp}{{\mathcal P}}    \nc{\calr}{{\mathcal R}}
\nc{\cals}{{\mathcal S}}    \nc{\calt}{{\Omega}}
\nc{\calv}{{\mathcal V}}    \nc{\calw}{{\mathcal W}}
\nc{\calx}{{\mathcal X}}    \nc{\calu}{{\mathcal U}}
\nc{\caly}{{\mathcal Y}}
\nc{\uOpAlg}{{\mathfrak{uOpAlg}}}
\nc{\OpAlg}{{\mathfrak{OpAlg}}}
\nc{\ComOpAlg}{{\mathfrak{ComOpAlg}}}
\nc{\OpVect}{{\mathfrak{OpVect}}}
\nc{\OpSet}{{\mathfrak{OpSet}}}
\nc{\OpMon}{{\mathfrak{OpMon}}}
\nc{\ComOpMon}{{\mathfrak{ComOpMon}}}
\nc{\OpSem}{{\mathfrak{OpSem}}}
\nc{\ComOpSem}{{\mathfrak{ComOpSem}}}
\nc{\uAlg}{{\mathfrak{uAlg}}}
\nc{\Alg}{{\mathfrak{Alg}}}
\nc{\ComAlg}{{\mathfrak{ComAlg}}}
\nc{\Vect}{{\mathfrak{Vect}}}
\nc{\Set}{{\mathfrak{Set}}}
\nc{\Mon}{{\mathfrak{Mon}}}
\nc{\ComMon}{{\mathfrak{ComMon}}}
\nc{\Sem}{{\mathfrak{Sem}}}
\nc{\mtOpSet}{{\Omega\zhx\mathfrak{Set}}}
\nc{\mtOpSem}{{\Omega\zhx\mathfrak{Sem}}}
\nc{\mtOpMon}{{\Omega\zhx\mathfrak{Mon}}}
\nc{\mtOpVect}{{\Omega\zhx\mathfrak{Vect}}}
\nc{\mtOpAlg}{{\Omega\zhx\mathfrak{Alg}}}
\nc{\mtuOpAlg}{{\Omega\zhx\mathfrak{uAlg}}}
\nc{\ComSem}{\mathfrak{ComSem}}
\nc{\fraka}{{\mathfrak a}}
\nc{\frakb}{\mathfrak{b}}
\nc{\frakg}{{\frak g}}
\nc{\frakl}{{\frak l}}
\nc{\fraks}{{\frak s}}
\nc{\frakB}{{\frak B}}
\nc{\frakm}{{\frak m}}
\nc{\frakM}{{\mathfrak{M}_\Omega}}
\nc{\frakp}{{\frak p}}
\nc{\frakW}{{\frak W}}
\nc{\frakX}{{\frak X}}
\nc{\frakS}{{\frak{S}_\Omega}}
\nc{\frakA}{{\frak A}}
\nc{\frakx}{{\frakx}}
\nc{\frakMstar}{{\mathfrak{M}_\Omega^\star}}
\nc{\frakSstar}{{\mathfrak{S}_\Omega^\star}}
\nc{\lir}[1]{\textcolor{red}{\underline{Li:}#1 }}
\begin{document}

\title[GS bases for free multi-operated algebras over algebras]{Gr\"obner-Shirshov bases and linear bases    for  free multi-operated algebras over algebras with applications to differential Rota-Baxter algebras and   integro-differential algebras}

\author{Zuan Liu, Zihao Qi, Yufei Qin and Guodong Zhou}

\address{Zuan Liu, Yufei Qin and Guodong Zhou,  School of Mathematical Sciences, Shanghai Key Laboratory of PMMP,
	East China Normal University,
	Shanghai 200241,
	China}
\email{2451533837@qq.com}
\email{290673049@qq.com}
 \email{gdzhou@math.ecnu.edu.cn}

 \address{Zihao  Qi, School of Mathematical Sciences,
	Fudan University, Shanghai 200433, China}
\email{qizihao@foxmail.com}

\date{\today}

\begin{abstract}  Quite much recent studies has been attracted to the operated algebra since it unifies various notions such as the differential algebra and the Rota-Baxter algebra. An $\Omega$-operated algebra is a an (associative) algebra equipped with a set $\Omega$ of linear operators which might satisfy certain operator identities such as the Leibniz rule. A free $\Omega$-operated algebra $B$ can be generated on an algebra $A$ similar to a free algebra generated on a set. If $A$ has a Gr\"{o}bner-Shirshov basis $G$ and if the linear operators $\Omega$ satisfy a set $\Phi$ of operator identities, it is natural to ask when the union $G\cup \Phi$ is a Gr\"{o}bner-Shirshov basis of $B$. A previous work answers this question affirmatively under a mild condition, and thereby obtains a canonical linear basis of $B$.

In this paper, we answer this question in the general case of multiple linear operators. As applications we get operated Gr\"{o}bner-Shirshov bases for free differential Rota-Baxter algebras and free integro-differential algebras over algebras as well as their linear bases. One of the key technical difficulties is to introduce new monomial orders for the case of two operators, which might be of independent interest.

\end{abstract}

\subjclass[2010]{
16Z10 
03C05 
08B20 
12H05 
16S10 
17B38  
}

\keywords{}

\maketitle

 \tableofcontents

\allowdisplaybreaks

\section*{Introduction}

  This paper extends the results of \cite{QQWZ21} to  algebras endowed with several operators, with applications to differential Rota-Baxter algebras and   integro-differential algebras.

 \subsection{Operated GS basis theory: from a single operator to multiple operators}\

Since its introduction  by Shirshov \cite{Shirshov} and Buchberger \cite{Buc} in the  sixties  of last century,   Gr\"obner-Shirshov (=GS) basis theory  has become one of the main tools of computational algebra; see for instance \cite{Green, BokutChen14, BokutChen20}. In order to deal with algebras endowed with operators, Guo and his coauthors introduced   a  GS basis theory   in a series of papers \cite{Guo09a,  GGSZ14, GSZ13,  GaoGuo17} (see also \cite{BokutChenQiu})
 with the goal to attack Rota's program \cite{Rota} to classify ``interesting'' operators on algebras.
Guo et al.  considered operators satisfying some polynomial identities, hence called operated polynomial identities (aka. OPIs) \cite{Guo09a,  GGSZ14, GSZ13,  GaoGuo17}. Via GS basis theory and  the somewhat equivalent theory: rewriting systems, they could define when OPIs are GS.  They are mainly interested into two classes of OPIs:  differential type OPIs and Rota-Baxter type OPIs, which are carefully  studied  in \cite{GSZ13, GGSZ14, GaoGuo17}.
 For   the state of art, we refer the reader to the survey paper \cite{GGZ21}  and for recent development, see \cite{ZhangGaoGuo21,  GuoLi21,  QQWZ21,   ZhangGaoGuo}.

 In these  papers \cite{Guo09a,  GGSZ14, GSZ13,  GaoGuo17}, the operated GS theory and hence Rota's classification  program have been  carried out only   for  algebras endowed with a single operator.
 It would be very interesting to carry out further   Rota's program for  the general case of multiple linear operators.

The paper \cite{BokutChenQiu}   contains a first step of this program  by   developing the GS basis theory in this generalised setup. We will review and update the GS basis theory in the multi-operated setup in Section~\ref{Section: multi-operated  GS bases}.
 
 Another direction is to generalise from  operated algebras over a base field to operated algebras over a base ring. While previous papers  \cite{QQWZ21, QQZ21} considered this aspect for single operator case, this paper  is aimed to deal with this aspect for     multiple linear operator case. In particular, some new monomial orders for the two operator case will be constructed which enable us to study operated GS bases for free  operated algebras generated by   algebras, while it seems that the monomial orders appeared in previous papers can be applied directly when the base ring is not a field any more.


\medskip

\subsection{Free operated algebras over algebras}\

Recently, there is a need to develop free  operated algebras satisfying some OPIs  over a fixed  algebras and construct   GS  bases and linear bases  for these free  algebras as long as a GS  basis is known for the given algebra.   Ebrahimi-Fard and Guo  \cite{ERG08a} used rooted trees and forests to give explicit constructions of free noncommutative
Rota-Baxter algebras on modules and sets; Lei and Guo \cite{LeiGuo} constructed the linear basis of free Nijenhuis algebras over associative algebras;
   Guo and Li \cite{GuoLi21} gave a linear basis of the free differential algebra over associative algebras by introducing the notion of differential GS bases.

In a previous paper \cite{QQWZ21}, the authors considered a  question which can be roughly stated as follows:
\begin{Ques} \label{Ques: GS basis for free algebras over algebras} Given a (unital or nonunital)  algebra  $A$  with a   GS basis  $G$ and  a set $\Phi$ of OPIs,
	  assume that   these OPIs  $\Phi$ are GS  in the sense of \cite{BokutChenQiu, GSZ13, GGSZ14,GaoGuo17}.  Let $B$
be the free  operated algebra satisfying $\Phi$ over $A$.  When will $\Phi\cup G$  be  a GS  basis  for $    B$?
\end{Ques}
They answer this question in the affirmative under a mild condition in \cite[Theorem 5.9]{QQWZ21}.  When this   condition is satisfied, $\Phi\cup G$  is  a GS  basis  for $    B$ and as a consequence, we also get  a linear basis of $B$. This result    has been applied to all Rota-Baxter type OPIs, a class of differential type OPIs, averaging OPIs and Reynolds OPI in \cite{QQWZ21}.
It was also applied to
  differential type OPIs by  introducing
  some new monomial orders    \cite{QQZ21}.

  In this paper, we consider a similar question for multi-operated algebras.

  Let $\Omega$ be a nonempty set which will be the index set of operators. Algebras endowed with operators indexed by $\Omega$ will be called $\Omega$-algebras.  OPIs can be extended to the multi-operated setup and one can introduce the notion of $\Omega$-GS for OPIs.

 \begin{Ques}  \label{Ques: nulti-operated setting}  Let $\Phi$ be  a set  of OPIs of a set of operators indexed by $\Omega$.  Let $A$ be  a (unital) algebra  together   with a   GS  basis $G$.  Assume that   these OPIs  $\Phi$ are GS  in the sense of  Section~\ref{Section: multi-operated  GS bases}.  Let $B$
be the free  $\Omega$-algebra over $A$ such that  the operators satisfy  $\Phi$.  When will $\Phi\cup G$ be  an $\Omega$-GS  basis  for $    B$?
\end{Ques}

We extend  the main result   of \cite{QQWZ21} to multi-operated cases; see Theorem~\ref{Thm: GS basis for free unital Phi algebra over unital alg} for unital algebras  and Theorem~\ref{Thm: GS basis for free nonunital Phi algebra over nonunital alg} for nonunital algebras.

\medskip

\subsection{Differential Rota-Baxter algebras and   integro-differential algebras}\

The main motivation of this paper comes, in fact,  from differential Rota-Baxter algebras and   integro-differential algebras.

Differential Rota-Baxter algebras were introduced by Guo and Keigher \cite{GK08} which  reflect  the relation between the differential operator and the integral
operator as in the First Fundamental Theorem of Calculus.  Free differential Rota-Baxter algebras were constructed by using various tools including  angularly decorated rooted forests  and GS basis theory \cite{GK08, BokutChenQiu}.

Integro-differential algebras (of zero weight) were defined  for the algebraic study of boundary problems
for  linear systems of linear ordinary differential equations.  Guo, Regensburger and Rosenkranz \cite{GRR14} introduced Integro-differential algebras with weight.
Free objects and their linear bases   were constructed by using GS basis theory \cite{GRR14, GGZ14, GGR15}

The main goal of this paper is to study free differential Rota-Baxter algebras  and free integro-differential algebras  over algebras from the viewpoint of operated GS basis theory.
In particular, when the base algebra is reduced to $\bfk$, our results also give GS bases and linear bases   for free differential Rota-Baxter algebras  and free integro-differential algebras.

However, the original monomial orders used in \cite{BokutChenQiu, GRR14, GGZ14, GGR15}  do not satisfy the hypothesis in Theorems~\ref{Thm: GS basis for free unital Phi algebra over unital alg} and  \ref{Thm: GS basis for free nonunital Phi algebra over nonunital alg} for free multi-operated algebras over algebras, and we have to introduce a new monomial order  $\leq_{\operatorname{PD}}$ (resp.~$\leq_{\operatorname{uPD}}$) to overcome the problem; see Section~\ref{section: order}.

In contrast to  the use  different monomial orders while dealing with free differential Rota-Baxter algebras and free integro-differential algebras  in \cite{BokutChenQiu}  and     \cite{GGR15} respectively, we will demonstrate that our monomial ordering  $\leq_{\operatorname{PD}}$       can be applied to both types of algebras   simultaneously, as we shall see in Sections~\ref{Section: differential Rota-Baxter algebras} and \ref{Section: integro-differential algebras}.
Moreover, since the case of the unital algebras was not discussed in \cite{BokutChenQiu}, this aspect is addressed in  Subsection~\ref{Subsection: case unital   algebras} by using our monomial order  $\leq_{\operatorname{uPD}} $.

\medskip

\subsection{Outline of the paper}\

This paper is organized as follows.

The first section contains remainder on  free objects in multi-operated setting and on the construction of free $\Omega$-semigroups and related structures,
 and introduces some new monomial orders for the case of two operators, which will be the key technical tool of this paper.

 In the second section, we recall the theory of GS bases for the multi-operated setting.  After introducing OPIs,  GS property for OPIs and $\Omega$-GS bases for multi-operated algebras are defined; after giving some facts about free multi-operated $\Phi$-algebras on algebras, answers to Question~\ref{Ques: nulti-operated setting} are presented.

 In the third section, multi-operated GS bases and linear bases   for free differential Rota-Baxter algebras on algebras are studied and the fourth section contains our investigation  for free   integro-differential algebras on algebras.

\medskip

\textbf{Notation:} Throughout this paper, $\bk$ denotes a  base field. All the vector spaces and algebras are over $\bk$.

\bigskip

\section{New monomial orders on  free multi-operated semigroups and monoids}

In this section,  we recall  free objects in multi-operated setting and the construction of free $\Omega$-semigroups and related structures, and
   define two new monomial  orders  $\leq_{\operatorname{PD}}$ and $\leq_{\operatorname{uPD}}$  on free multi-operated semigroups and monoids.
    The  main results of this paper will  highly depend on these new monomial  orders.

  For a set $Z$, denote by $\bk Z$ (resp.  $\cals(Z)$, $\calm(Z)$) the free $\bk$-vector space (resp. free semigroup, free monoid) generated by $Z$. Denote the category of  sets (resp. semigroups, monoids)  by $\Set$ (resp. $\Sem$, $\Mon$). Denote the categories of  $\bk$-algebras and unital $\bk$-algebras by $\Alg$ and $\uAlg$  respectively.

Throughout this section, let $\Omega$ be a nonempty set which will be the index set of operators.

\subsection{Free objects in the multi-operated setup}\

\begin{defn}
  An operated set with an operator index set $\Omega$ or simply  an $\Omega$-set is a set $S$ endowed with a family of maps
    $P_\omega: S\rightarrow S$ indexed by $\omega\in \Omega$.  The morphisms between $\Omega$-sets  can be defined in the obvious way.    Denote the category of  $\Omega$-sets by $\mtOpSet$.

  Similarly, we can define  $\Omega$-semigroups and $\Omega$-monoids.  Their categories are denoted by   $\mtOpSem$ and  $\mtOpMon$ respectively.

$\Omega$-vector spaces,   nonunital or unital  $\Omega$-algebras can be defined in a similar way,  except asking, moreover, that  all the operators are $\bk$-linear maps. Denote the category of $\Omega$-vector spaces, (resp. nonunital $\Omega$-algebras,  unital  $\Omega$-algebras) by $\mtOpVect$ (resp.  $\mtOpAlg$, $\mtuOpAlg$) with obvious morphisms.
\end{defn}
%
%
%

As in  \cite{QQWZ21}, there exists the following diagram of  functors:
\[\xymatrixrowsep{2.5pc}
\xymatrix{
	& \mtOpVect \ar@<.5ex>@{->}[rr]\ar@<.5ex>@{->}[ld]\ar@<.5ex>@{-->}[dd]
	& &\mtOpAlg   \ar@<.5ex>@{->}[rr]\ar@<.5ex>@{->}[ll]\ar@<.5ex>@{->}[ld]\ar@<.5ex>@{-->}[dd]
	&  & \mtuOpAlg \ar@<.5ex>@{->}[ld] \ar@<.5ex>@{->}[ll]\ar@<.5ex>@{->}[dd] \\
	\mtOpSet \ar@<.5ex>@{->}[ur] \ar@<.5ex>@{->}[rr] \ar@<.5ex>@{->}[dd]
	& &\mtOpSem  \ar@<.5ex>@{->}[ur]   \ar@<.5ex>@{->}[rr]\ar@<.5ex>@{->}[ll]\ar@<.5ex>@{->}[dd]
	& & \mtOpMon  \ar@<.5ex>@{->}[ur]\ar@<.5ex>@{->}[ll]  \ar@<.5ex>@{->}[dd]  &\\
	&  \Vect  \ar@<.5ex>@{-->}[uu]   \ar@<.5ex>@{-->}[rr] \ar@<.5ex>@{-->}[ld]
	&  &  \Alg   \ar@<.5ex>@{-->}[uu]  \ar@<.5ex>@{-->}[rr]\ar@<.5ex>@{-->}[ll]\ar@<.5ex>@{-->}[ld]
	& &  \uAlg  \ar@<.5ex>@{->}[uu]\ar@<.5ex>@{-->}[ll]\ar@<.5ex>@{->}[ld]   \\
	\Set \ar@<.5ex>@{->}[uu] \ar@<.5ex>@{-->}[ur] \ar@<.5ex>@{->}[rr]
	&  & \Sem   \ar@<.5ex>@{->}[rr] \ar@<.5ex>@{->}[uu]\ar@<.5ex>@{-->}[ur]\ar@<.5ex>@{->}[ll]
	&  & \Mon  \ar@<.5ex>@{->}[ur]  \ar@<.5ex>@{->}[uu] \ar@<.5ex>@{->}[ll] &   }
\]
In this diagram, all functors from right to left, from below to above and from southwest to northeast are the obvious forgetful functors. The other functors are free object functors   which  are left adjoint to    the   forgetful functors.

Our notations for free object functors are analogous to those in \cite{QQWZ21}. For instance, $\calf^{\mtOpAlg}_{\Alg}$ denotes the free object functor from
the category of algebras to that of nonunital $\Omega$-algebras.

We could give similar constructions of these free object functors as in Sections 1-3 of \cite{QQWZ21}. However, as we don't need the details, we will not repeat them. The curious readers could consult \cite{QQWZ21} and extend the constructions in \cite{QQWZ21} without essential difficulties.

\subsection{Free multi-operated semigroups and monoids}\


%
%


  Now we explain  the construction of the  free $\Omega$-semigroup generated by  a set $Z$.

For $\omega\in \Omega$, denote by  $\lfloor Z \rfloor_\omega$     the set of all formal elements $ \lfloor z \rfloor_\omega, z\in Z$ and put
$\lfloor Z \rfloor_\Omega=\sqcup_{\omega\in \Omega} \left \lfloor Z\right \rfloor_\omega$. The inclusion into the first component  $ Z\hookrightarrow Z\sqcup \lfloor Z \rfloor_\Omega$ induces an injective semigroup homomorphism
$$ i_{0,1}: \frakS_{,0}(Z):=\cals(Z)  \hookrightarrow \frakS_{,1}(Z):= \cals(Z\sqcup  \lfloor Z \rfloor_\Omega).
$$

  For $n \geq 2$, assume  that we have constructed $\frakS_{,n-2}(Z)$ and  $\frakS_{,n-1}(Z)= \cals(Z\sqcup \lfloor \frakS_{,n-2}(Z) \rfloor_\Omega)$  endowed  with  an  injective homomorphism of semigroups
 $
i_{n-2,n-1}: \frakS_{,n-2}(Z) \hookrightarrow \frakS_{,n-1}(Z).
$
We define the semigroup
$$
\frakS_{,n}(Z):= \cals(Z\sqcup \lfloor \frakS_{,n-1}(Z) \rfloor_\Omega)
$$ and
the natural  injection
$$
\Id_Z\sqcup \lfloor i_{n-2,n-1} \rfloor_\Omega:Z\sqcup \lfloor \frakS_{,n-2}(Z) \rfloor_\Omega \hookrightarrow Z\sqcup \lfloor \frakS_{,n-1}(Z) \rfloor_\Omega
$$
induces an injective semigroup  homomorphism
$$	i_{n-1,n}: \frakS_{,n-1}(Z)= \cals(Z\sqcup \lfloor  \frakS_{,n-2}(Z) \rfloor_\Omega) \hookrightarrow  \frakS_{,n}(Z) = \cals (Z\sqcup \lfloor \frakS_{,n-1}(Z) \rfloor_\Omega).$$
Define $  \frakS(Z)=\varinjlim \frakS_{,n}(Z) $  and the maps  sending  $u\in \frakS_{,n}(Z)$ to $\left \lfloor u\right \rfloor_{\omega}\in \frakS_{,n+1}(Z)$ induces a family of  operators $P_{\omega}, \omega \in \Omega$  on $\frakS(Z)$.


The construction of the free $\Omega$-monoid $\frakM (M)$ over a set $Z$ is similar, by just replacing $\cals(Z)$ by $\calm(Z)$ everywhere in the construction.

 \begin{remark}\label{remark: monoids}
 We will use another  construction of  $\frakM (Z)$.  In fact, add some symbols $\lfloor1\rfloor_\Omega=\{\lfloor1\rfloor_\omega, \omega\in \Omega\}$  to $Z$ and form $ \frakS (Z \sqcup \lfloor1\rfloor_\Omega  )$, then $\frakM (Z)$ can be obtained from $  \frakS(Z \sqcup \lfloor1\rfloor_\Omega  )$ by just adding the empty word $1$.

 \end{remark}

 It is easy to see that $\bk\frakS(Z)$(resp. $\bk\frakM (Z)$) is the free nonunital (resp. unital) $\Omega$-algebra generated by $Z$.


\subsection{Monomial orders}\label{section: order}\

In this subsection, we introduce some new monomial orders on   free $\Omega$-semigroups and  free $\Omega$-monoids. We
    only consider the case of two operators, say $\Omega=\{P, D\}$ as the main examples in mind are differential Rota-Baxter algebras and   integro-differential algebras following the convention from \cite{GGR15}.

     We first recall the definitions of well orders and monomial orders.

\begin{defn}
	Let $Z$ be a nonempty set.
	\begin{itemize}
		\item [(a)] A preorder   $\leq$  is a binary relation on $Z$ that is reflexive and transitive, that is, for all $x, y, z \in Z$, we have
		\begin{itemize}
			\item [(i)] $x \leq x$; and
			\item[(ii)]if $x \leq y, y \leq z$, then $x \leq z$.
		\end{itemize}
 In the presence of a preoder $\leq$, we denote $x=_{Z} y$ if $x \leq y$ and $x \geq y$; if $x \leq y$ but $x \neq y$, we write $x< y$ or $y> x$.
		\item[(b)] A pre-linear order $\leq$ on $Z$ is a preorder $\leq$ such that either $x \leq y$ or $x \geq y$ for all $x, y \in Z$.
		\item[(c)] A linear order or a total order $\leq$ on $Z$ is a pre-linear order $\leq$  such that  $\leq$ is symmetric, that is,   $x \leq y$ and $y \leq x$ imply    $x=y$.

\item[(d)] A preorder   $\leq$ on $Z$ is said to satisfy  the descending chain condition, if for each descending chain $x_1\geq x_2\geq x_3\geq \cdots$,   there exists $N\geq 1$ such that $x_N=_Z x_{N+1}=_Z\cdots$.
    A linear order satisfying the descending chain condition is called a well order.

	\end{itemize}
\end{defn}

Before giving the definition of monomial orders, we need to introduce the following notions generalising the case of one operator.
\begin{defn}
	Let $Z$ be a set and $\star$ a symbol not in $Z$.
	\begin{itemize}
		\item [(a)] Define $\frakMstar(Z)$  to be the subset of $\frakM(Z\cup\star)$ consisting of elements with $\star$ occurring only once.
		\item [(b)] For $q\in\frakMstar(Z)$ and $u\in   \frakM(Z)$,	we define $q|_{u}\in \frakM(Z)$ to be the element obtained by
		replacing the symbol $\star$ in $q$ by $u$. In this case, we say $u$ is a subword of $q|_{u}$.
		\item [(c)] For $q\in\frakMstar(Z)$ and $s=\sum_ic_iu_i\in \bk \frakM(Z)$  with $c_i\in\bk$ and $u_i\in\frakM(Z)$, we define
		$$q|_s:=\sum_ic_iq|_{u_i}.$$
		\item [(d)] Define $\frakSstar(Z)$ to be the subset of $\frakS(Z\cup\star)$ consisting of elements with $\star$ occurring only once. It is easy to see $\frakSstar(Z)$ is a subset of $\frakMstar(Z)$, so we also have notations in (a)-(c) for $\frakSstar(Z)$ by restriction.
	\end{itemize}
\end{defn}

\begin{defn}
	Let $Z$ be a set. 
	
	\begin{itemize}	
		\item [(a)]A monomial order on $\cals(Z)$ is a well-order $\leq$ on $\cals(Z)$ such that
		$$  u < v \Rightarrow  uw < vw\text{ and }wu<wv  \text{ for any }u, v, w\in \cals(Z);$$
		\item [(a')] a monomial order on $\calm(Z)$ is a well-order $\leq$ on $\calm(Z)$ such that
		$$ u < v \Rightarrow  wuz < wvz \text{ for any }u, v, w,z\in \calm(Z);$$
		
		\item [(b)]a monomial order on  $\frakS(Z)$  is a well-order $\leq$ on $\frakS(Z)$  such that
		$$u< v \Rightarrow q|_u<q|_v\quad\text{for all }u,v\in\frakS(Z)\text{ and } q\in \frakSstar(Z);$$
		\item [(b')]a monomial order on $\frakM(Z)$ is a well-order $\leq$ on $\frakM(Z)$ such that
		$$u< v \Rightarrow q|_u<q|_v\quad\text{for all }u,v\in\frakM(Z)\text{ and } q\in \frakMstar(Z). $$
	\end{itemize}
\end{defn}

Let us recall some known preorders.

\begin{defn}
	
	For two elements $u, v \in \frakS(Z)$,
	\begin{itemize}
		\item [(a)]  define
		$$
		u \leq_{\operatorname{D}} v \Leftrightarrow \operatorname{deg}_{D}(u) \leq \operatorname{deg}_{D}(v),
		$$
		where the $D$-degree $\operatorname{deg}_{D}(u)$ of $u$ is the number of occurrence  of $\lfloor~\rfloor_D$ in $u$;
           \item [(b)]  define
		$$
		u \leq_{\operatorname{P}} v \Leftrightarrow \operatorname{deg}_{P}(u) \leq \operatorname{deg}_{P}(v),
		$$
		where the $P$-degree $\operatorname{deg}_{P}(u)$ of $u$ is the number of occurrence  of $\lfloor~\rfloor_P$ in $u$;

		\item [(c)]   define
		$$
		u \leq_{\operatorname{dZ}} v \Leftrightarrow \operatorname{deg}_{Z}(u) \leq \operatorname{deg}_{Z}(v),
		$$
		where the $Z$-degree $\operatorname{deg}_{Z}(u)$ is the number of elements of $Z$ occurring in $u$ counting  the repetitions;
	\end{itemize}
	
\end{defn}

\begin{defn}\label{Def: deg-lex order}
	Let $Z$ be a set endowed with a well order $\leq_{Z}$.   
	Introduce  the degree-lexicographical order $\leq_{\rm {dlex }}$ on $\cals(Z)$ by imposing, for any $u\neq  v \in \cals(Z)$, $u <_{\rm {dlex}}v$   if
	\begin{itemize}
		\item[(a)] either $\operatorname{deg}_{Z}(u)<\operatorname{deg}_{Z}(v)$, or
		\item[(b)] $\operatorname{deg}_{Z}(u)=\operatorname{deg}_{Z}(v)$, and $u=mu_{i}n$, $v=mv_{i}n^\prime$ for some $m,n,n^\prime\in \calm(Z)$ and $u_{i},v_{i}\in Z$ with $u_{i}<_{Z} v_{i}$.
	\end{itemize}
\end{defn}
It  is obvious that the degree-lexicographic order $\leq_{\mathrm{dlex}}$ on $\cals(Z)$ is a well order .


  We now define a preorder $\leq_{\operatorname{Dlex}}$ on $\frakS(Z)$, by the following recursion process:
\begin{itemize}
	\item [(a)] For  $u,v\in \frakS_{,0}(Z)=\cals(Z)$,  define
	$$u\leq_{\operatorname{Dlex}_0} v \Leftrightarrow u \leq_{\mathrm{dlex}}v.$$
	\item [(b)] Assume that we have constructed  a well order $\leq_{\operatorname{Dlex}_n}$  on $\frakS_{,n}(Z)$ for $n\geq 0$ extending all $\leq_{\operatorname{Dlex}_i}$ for any $0\leq i\leq n-1$.   The  well order $\leq_{\operatorname{Dlex}_n}$ on $\frakS_{,n}(Z)$ induces a well order   on $\lfloor\frakS_{,n}(Z)\rfloor_P$  (resp. $\lfloor\frakS_{,n}(Z)\rfloor_D$), by imposing
$\lfloor u\rfloor_P\leq \lfloor v\rfloor_P$  (resp.  $\lfloor u\rfloor_D\leq \lfloor v\rfloor_D$) whenever $u\leq_{\operatorname{Dlex}_n} v\in \frakS_{,n}(Z)$.
  By setting   $u<v<w$ for all  $u\in Z$, $v\in \lfloor\frakS_{,n}(Z)\rfloor_D$, and $w\in \lfloor\frakS_{,n}(Z)\rfloor_P$, we obtain a  well order  on  $Z\sqcup \lfloor\frakS_{,n}(Z)\rfloor_P \sqcup \lfloor\frakS_{,n}(Z)\rfloor_D$.
Let    $\leq_{\operatorname{Dlex}_{n+1}}$ be  the  degree lexicographic order on   $\frakS_{,n+1}(Z)=\cals(Z\sqcup \lfloor\frakS_{,n}(Z)\rfloor_P \sqcup \lfloor\frakS_{,n}(Z)\rfloor_D)$  induced by that on $Z\sqcup \lfloor\frakS_{,n}(Z)\rfloor_P \sqcup \lfloor\frakS_{,n}(Z)\rfloor_D$.
\end{itemize}
Obviously     $\leq_{\operatorname{Dlex}_{n+1}}$ extends    $\leq_{\operatorname{Dlex}_{n}}$. By a limit process, we get a preorder on $\frakS(Z)$ which will be  denoted by
 $\leq_{\operatorname{Dlex}}$. As is readily seen, $\leq_{\operatorname{Dlex}}$ is a linear order.

\begin{remark}\label{remark: monomial order for multi-operator case}
	It is easy to see that the above  construction of $\leq_{\operatorname{Dlex}}$ can be  extended to the case of more than two operators.

In fact, for  a given well order
$\leq_{\Omega}$ in the index set  $\Omega$,
the defining process of   $\leq_{\operatorname{Dlex}}$ on $\frakS(Z)$ is the same as above except one detail in step (b), where
we need to put  $u<v<w$ for all  $u\in Z$, $v\in \lfloor\frakS_{,n}(Z)\rfloor_{\omega_1}$ and $w\in \lfloor\frakS_{,n}(Z)\rfloor_{\omega_2}$    with  $\omega_1 \leq_{\Omega} \omega_2\in \Omega$.
\end{remark}

\begin{defn}\label{GD}

For any $u\in\frakS(Z)$, let $u_1,\dots ,u_n\in Z$ be  all the elements occurring in $u$ from left to right. If a right half bracket $\rfloor_D$ locates in the gap between $u_i$ and $u_{i+1}$, where $1\leq i<n$,   the  GD-degree of this right half bracket is defined  to be $n-i$;  if there is a  right half bracket $\rfloor_D$ appearing  on the right of $u_n$, we define the GD-degree of this half bracket to be  $0$. We define the GD-degree of $u$, denoted by $\deg_{GD}(u)$, to be  the sum of the   GD-degrees  of all the half right brackets in $u$.

For example,  the GD-degrees of the half right brackets in  $u=\left \lfloor x \right \rfloor_D \left \lfloor y \right \rfloor_D$   with  $x,y \in Z$ are  respectively 1 and 0 from left to right, so   $\deg_{GD}(u)=1$ by definition.

For $u, v\in\frakS(Z)$, define the GD-degree  order $\leq_{\mathrm{GD}}$  by
         $$ u\leq_{\mathrm{GD}}v\Leftrightarrow \deg_{GD}(u)\leq \deg_{GD}(v).$$

\end{defn}

\begin{defn}\label{GP}
For any $u\in\frakS(Z)$, let $u_1,\dots ,u_n\in Z$ be  all the elements occurring in $u$ from left to right. If there are $i$ elements in $Z$ contained in a bracket $\lfloor~\rfloor_P$ ,   the  GP-degree of this  bracket is defined  to be $n-i$. We denote by $\deg_{GP}(u)$   the sum of the  GP-degree  of all the  brackets  $\lfloor~\rfloor_P$ in $u$.

For example, the the GP-degrees of the brackets $\lfloor~\rfloor_P$ of $u=\left \lfloor xy \right \rfloor_P \left \lfloor z \right \rfloor_P$ with $x,y,z \in Z$ are respectively 1 and 2 from left to right, so  $\deg_{GP}(u)=3$ by definition.

For $u,v\in\frakS(Z)$, define the GD-degree  order $\leq_{\mathrm{GD}}$  by
         $$ u\leq_{\mathrm{GP}}v\Leftrightarrow \deg_{GP}(u)\leq \deg_{GP}(v).$$

\end{defn}

It is easy to obtain the following lemma whose  proof is thus omitted.

\begin{lem}\label{lemma: prelinear order with descending chain	condition}
	The orders $\leq_{\mathrm{D}}$, $\leq_{\mathrm{P}}$, $\leq_{\mathrm{dZ}}$,   $\leq_{\operatorname{GD}}$ and $\leq_{\mathrm{GP}}$    are pre-linear orders satisfying the descending chain	condition.
\end{lem}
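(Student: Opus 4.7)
The plan is to observe that all five orders are defined as pullbacks along a nonnegative-integer-valued degree function, so they inherit reflexivity, transitivity, comparability, and the descending chain condition uniformly from $(\NN,\leq)$.

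First I would verify that the five functions $\deg_D,\deg_P,\deg_Z,\deg_{GD},\deg_{GP}\colon \frakS(Z)\to \NN$ are well defined. Each element $u\in\frakS(Z)$ is a bracketed word built from finitely many generators in $Z$ and finitely many brackets $\lfloor\,\rfloor_P$, $\lfloor\,\rfloor_D$; counting occurrences (for $\deg_D$, $\deg_P$, $\deg_Z$), or summing the nonnegative contributions coming from each right bracket $\rfloor_D$ (for $\deg_{GD}$) or each bracket $\lfloor\,\rfloor_P$ (for $\deg_{GP}$), yields a finite nonnegative integer. In particular, the only ambiguity to rule out is that the count is intrinsic to the word rather than to any representation, which follows because the inductive construction $\frakS(Z)=\varinjlim\frakS_{,n}(Z)$ uses injective transition maps.

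Second, for each degree function $\deg$, the relation $u\leq v\Leftrightarrow \deg(u)\leq \deg(v)$ is reflexive since $\deg(u)\leq \deg(u)$ in $\NN$, and transitive since $\leq$ is transitive on $\NN$. It is pre-linear because $(\NN,\leq)$ is totally ordered, so either $\deg(u)\leq\deg(v)$ or $\deg(v)\leq\deg(u)$ holds. Note that these orders are genuinely only pre-linear and not linear: distinct monomials of equal degree give $u=_Z v$ without $u=v$.

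Third, for the descending chain condition, let $x_1\geq x_2\geq x_3\geq\cdots$ be a descending chain in $\frakS(Z)$ with respect to one of these orders. Applying the corresponding degree function yields a weakly decreasing sequence $\deg(x_1)\geq \deg(x_2)\geq\cdots$ in $\NN$. Since $(\NN,\leq)$ is well ordered, this sequence stabilizes: there exists $N$ with $\deg(x_N)=\deg(x_{N+1})=\deg(x_{N+2})=\cdots$, which translates to $x_N=_Z x_{N+1}=_Z x_{N+2}=_Z\cdots$ by the very definition of the preorder.

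There is no real obstacle here; the entire lemma reduces to the well-known fact that $(\NN,\leq)$ is a totally ordered set satisfying DCC, together with the routine observation that each degree function is well defined on $\frakS(Z)$. The proof for all five orders is identical, so one could write it once in a general lemma about pullbacks of total orders along $\NN$-valued functions and then apply it five times.
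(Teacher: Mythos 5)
Your proof is correct and is exactly the argument the paper has in mind (the paper omits the proof as ``easy''): each of the five relations is the pullback of the total order on $\NN$ along a well-defined degree function, so reflexivity, transitivity, comparability, and the descending chain condition all follow at once from the corresponding properties of $(\NN,\leq)$, with stabilization of degrees translating into $=_Z$-equality as required by the paper's definition of DCC for preorders. No gaps; the uniform ``pullback along an $\NN$-valued function'' formulation is a clean way to handle all five orders simultaneously.
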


Combining all the orders above, we can now construct an order $\leq_{\operatorname{PD}}$ of $\frakS(Z)$:
$$u \leq_{\operatorname{PD}}v \Leftrightarrow \left\{
\begin{array}{lcl}
	u \leq_{\mathrm{D}} v ,\text{or }\\
	u =_{\mathrm{D}} v \text{ and } u \leq_{\mathrm{P}} v , \text{or }\\
	u =_{\mathrm{D}} v , u =_{\mathrm{P}} v \text{ and } u \leq_{\mathrm{dZ}} v, \text{or }\\
	 u =_{\mathrm{D}} v , u =_{\mathrm{P}} v , u =_{\mathrm{dZ}} v \text{ and } u \leq_{\mathrm{GD}} v , \text{or }\\
      u =_{\mathrm{D}} v , u =_{\mathrm{P}} v , u =_{\mathrm{dZ}} v, u =_{\mathrm{GD}} v \text{ and }  u \leq_{\mathrm{GP}} v , \text{or }\\
      u =_{\mathrm{D}} v , u =_{\mathrm{P}} v , u =_{\mathrm{dZ}} v, u =_{\mathrm{GD}} v ,  u =_{\mathrm{GP}} v \text{ and } u \leq_{\mathrm{Dlex}} v.
\end{array}
\right.	$$

To  prove that the $\leq_{\operatorname{PD}}$ is a well-order, we need some preparation.

\begin{defn} 	\begin{itemize}
		\item [(a)] Given some preorders $\leq_{\alpha_{1}}, \dots, \leq_{\alpha_{k}}$ on a set $Z$ with $k\geq 2$, introduce another preorder
$\leq_{\alpha_{1}, \dots, \alpha_{k}}$ by imposing recursively
$$
	u \leq_{\alpha_{1}, \dots, \alpha_{k}} v \Leftrightarrow\left\{
	\begin{array}{l}
		u<_{\alpha_{1}} v,  \text{ or }  \\
		 u=_{\alpha_{1}} v \text { and } u \leq_{\alpha_{2}, \dots, \alpha_{k}} v.
	\end{array}\right.
	$$
\item [(b)] 	Let $k \geq2$ and let $\leq_{\alpha_i}$ be a pre-linear order on $Z_{i},~1 \leq i \leq k$. Define the lexicographical product order $\leq_{\text{clex}}$ on the cartesian product $Z_{1} \times Z_{2} \times \cdots \times Z_{k}$ by defining
$$(x_{1},\cdots, x_{k}) \leq_{\text {clex}}(y_{1},\cdots, y_{k}) \Leftrightarrow\left\{\begin{array}{l}x_{1}<_{\alpha_{1}} y_{1}, \text {or } \\  x_{1}=_{Z_{1}}y_{1} \text { and }\left(x_{2}, \cdots, x_{k}\right) \leq_{\rm{clex}}\left(y_{2}, \cdots, y_{k}\right),\end{array}\right.$$ where $\left(x_{2}, \cdots, x_{k}\right) \leq_{\rm{clex}}\left(y_{2}, \cdots, y_{k}\right)$ is defined by induction, with the convention that $\leq_{\rm{clex}}$ is the trivial relation when $k=1$.
\end{itemize}
\end{defn}

\begin{lem}[{\cite[Lemma 1.7]{QQZ21}}] \label{sequence of order gives linear order}
	\begin{itemize}
		\item[(a)]	For $k \geq  2$, let $\leq_{\alpha_{1}}, \dots, \leq_{\alpha_{k-1}}$ be pre-linear orders on $Z$, and $\leq_{\alpha_{k}}$ a linear order on $Z$. Then   $\leq_{\alpha_{1}, \dots, \alpha_{k}}$ is a linear order on $ Z$.
		\item[(b)] Let $ \leq_{\alpha_{i}} $ be a well order on $Z_i$, $1\leq i\leq k$.  Then the lexicographical product
		order $\leq_{\rm{clex}}$ is a well order on the cartesian product $Z_1\times Z_2\times \cdots \times Z_k$.
	
	\end{itemize}
\end{lem}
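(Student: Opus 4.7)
The plan is to prove both parts by induction on $k$, with the base case $k=2$ carrying the essential content.

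For part (a), I would verify the three axioms of a linear order for $\leq_{\alpha_1,\alpha_2}$ in the base case $k=2$. Reflexivity is immediate from reflexivity of both $\leq_{\alpha_1}$ and $\leq_{\alpha_2}$. Transitivity follows from a short case analysis: given $u \leq_{\alpha_1,\alpha_2} v$ and $v \leq_{\alpha_1,\alpha_2} w$, one splits on whether strict inequality $u <_{\alpha_1} v$ or equality $u =_{\alpha_1} v$ holds (and likewise for the pair $v, w$), and each subcase reduces to transitivity of $\leq_{\alpha_1}$ or of $\leq_{\alpha_2}$. Pre-linearity uses pre-linearity of $\leq_{\alpha_1}$ first and falls back on linearity of $\leq_{\alpha_2}$ when $u =_{\alpha_1} v$. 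The crucial place where the linearity hypothesis on $\leq_{\alpha_k}$ is used is antisymmetry: if $u \leq_{\alpha_1,\alpha_2} v$ and $v \leq_{\alpha_1,\alpha_2} u$, then one must have $u =_{\alpha_1} v$ (otherwise the two strict relations $u <_{\alpha_1} v$ and $v <_{\alpha_1} u$ would contradict the preorder axioms for $\leq_{\alpha_1}$), reducing the question to antisymmetry of $\leq_{\alpha_2}$. The inductive step then treats $\leq_{\alpha_2,\dots,\alpha_k}$ as a single linear order (by the induction hypothesis) and applies the $k=2$ case to the pair $(\leq_{\alpha_1}, \leq_{\alpha_2,\dots,\alpha_k})$.

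For part (b), I would again induct on $k$, with $k=1$ trivial. For $k=2$, linearity of $\leq_{\rm{clex}}$ follows from part (a), since well orders are in particular pre-linear and the last factor is linear. The critical point is the descending chain condition: given a descending chain $\{(x_1^{(n)}, x_2^{(n)})\}_{n \geq 1}$ in $Z_1 \times Z_2$, the first coordinates form a (weakly) descending chain in $Z_1$ by the very definition of $\leq_{\rm{clex}}$, which stabilizes at some index $N$ by the well-order condition on $\leq_{\alpha_1}$. From index $N$ onward the second coordinates form a descending chain in $Z_2$, which in turn stabilizes by the well-order condition on $\leq_{\alpha_2}$. The inductive step collapses the tail $Z_2 \times \cdots \times Z_k$ into a single well order by the induction hypothesis and reduces to the $k=2$ case.

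The only real subtlety, though not a serious obstacle, is the distinction between pre-linear and linear orders in part (a): since the intermediate orders $\leq_{\alpha_i}$ for $i < k$ are merely pre-linear, the symmetric relation $=_{\alpha_i}$ is only an equivalence relation (automatic from the preorder axioms) rather than literal equality, and one must be careful in the case analyses for transitivity, pre-linearity and antisymmetry not to conflate the equivalence $=_{\alpha_i}$ with equality $=$. Once this bookkeeping is handled, both parts reduce to short and essentially routine verifications.
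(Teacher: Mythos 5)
Your proposal is correct. Note that the paper does not actually prove this lemma: it is imported verbatim from \cite[Lemma 1.7]{QQZ21}, so there is no in-paper argument to compare against; your direct verification by induction on $k$ (with the $k=2$ case carrying the content) is exactly the routine argument one would expect, and the two key points — antisymmetry of $\leq_{\alpha_1,\dots,\alpha_k}$ resting on linearity of the last order $\leq_{\alpha_k}$, and the descending chain condition for $\leq_{\mathrm{clex}}$ obtained by first stabilizing the leading coordinates and then the remaining ones — are handled correctly. One small imprecision: in part (b) the linearity of $\leq_{\mathrm{clex}}$ does not follow \emph{literally} from part (a), since the coordinate comparisons pulled back to the product $Z_1\times\cdots\times Z_k$ are only pre-linear orders there and the last one is not linear on the product (two tuples agreeing in the final coordinate need not be equal); antisymmetry should instead be checked coordinatewise, using that each $\leq_{\alpha_i}$ is antisymmetric on $Z_i$ so that $=_{Z_i}$ is genuine equality. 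This is a one-line fix and does not affect the rest of your argument; your own caveat about not conflating the equivalence $=_{\alpha_i}$ with equality in part (a) is well taken, and indeed the strict relation $<_{\alpha_i}$ must be read as ``$\leq_{\alpha_i}$ and not $\geq_{\alpha_i}$'' for the lemma to hold.
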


\begin{prop}
	The order   $\leq_{\operatorname{PD}}$ is a well order on $\frakS(Z)$.
\end{prop}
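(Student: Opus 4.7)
The plan is to separately establish linearity and the descending chain condition (DCC) for $\leq_{\operatorname{PD}}$.

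For linearity, I would apply Lemma \ref{sequence of order gives linear order}(a) directly: by construction, $\leq_{\operatorname{PD}}$ is the lexicographic combination of the five pre-linear orders $\leq_{\mathrm{D}}, \leq_{\mathrm{P}}, \leq_{\mathrm{dZ}}, \leq_{\operatorname{GD}}, \leq_{\mathrm{GP}}$ (Lemma \ref{lemma: prelinear order with descending chain	condition}) followed by the linear order $\leq_{\operatorname{Dlex}}$ (noted just after its construction). Lemma \ref{sequence of order gives linear order}(a) then immediately yields that $\leq_{\operatorname{PD}}$ is a linear order on $\frakS(Z)$.

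For DCC, I would use a stabilization-plus-finiteness argument. Given any weakly decreasing chain $u_1 \geq_{\operatorname{PD}} u_2 \geq_{\operatorname{PD}} \cdots$ in $\frakS(Z)$, the nested structure of $\leq_{\operatorname{PD}}$ forces the integer sequences $\deg_D(u_i)$, $\deg_P(u_i)$, $\deg_Z(u_i)$ to be, successively, eventually weakly decreasing. Since each of $\leq_{\mathrm{D}}, \leq_{\mathrm{P}}, \leq_{\mathrm{dZ}}$ satisfies DCC (Lemma \ref{lemma: prelinear order with descending chain	condition}), these three sequences become eventually constant with stabilized values $d, p, n \in \mathbb{N}$, say from some index $N$ onwards.

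The crux of the proof, and the step I expect to be the main obstacle, is to show that the level set
\[
S_{n,d,p} := \{\, u \in \frakS(Z) \mid \deg_Z(u) = n,\ \deg_D(u) = d,\ \deg_P(u) = p \,\}
\]
is finite. The intuition is that the total \emph{token count} of $u$---the number of elements of $Z$ plus the number of brackets $\lfloor\cdot\rfloor_P$ and $\lfloor\cdot\rfloor_D$ occurring at every nesting depth---equals $n + d + p$, so the underlying combinatorial tree of $u$ has bounded size. I would formalize this by induction on $n + d + p$: every $u \in \frakS(Z)$ is a word of length at most $n+d+p$ over the alphabet $Z \sqcup \lfloor \frakS(Z) \rfloor_\Omega$, each non-$Z$ letter being a bracketed element $\lfloor v \rfloor_\omega$ with $v$ of strictly smaller token count, so the inductive hypothesis gives finitely many choices for each letter and hence for $u$. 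Once $|S_{n,d,p}| < \infty$ is in hand, the tail $u_N \geq_{\operatorname{PD}} u_{N+1} \geq_{\operatorname{PD}} \cdots$ lies in a finite totally ordered set and must stabilize, which yields DCC and completes the proof.
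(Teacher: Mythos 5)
Your linearity argument is exactly the paper's. The DCC part, however, has a genuine gap at precisely the step you flag as the crux: the level set $S_{n,d,p}$ is \emph{not} finite in general, because the set $Z$ is an arbitrary well-ordered set and may be infinite. Your induction on the token count $n+d+p$ only bounds the number of bracket/tree shapes; when $Z$ is infinite there are infinitely many choices of letters from $Z$ for each shape, so the tail of the chain does not lie in a finite totally ordered set and the "finite set must stabilize" conclusion fails. This is not a corner case you can discard: in the paper's applications $Z$ is the generating set of an arbitrary algebra $A=\bk\cals(Z)/I_A$, so infinite $Z$ must be covered, and indeed the definition of $\leq_{\mathrm{Dlex}}$ only asks for a well order on $Z$, not finiteness.

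The paper's proof avoids this by never invoking finiteness. After stabilizing \emph{all five} degree functions $\deg_D,\deg_P,\deg_Z,\deg_{GD},\deg_{GP}$ (you stabilize only the first three, which would also need fixing, since otherwise the tail need not be weakly decreasing for $\leq_{\mathrm{Dlex}}$), one observes that every term of the tail lies in the finite-stage piece $\frakS_{,k+p}(Z)$, where $k$ and $p$ are the stabilized $D$- and $P$-degrees. On $\frakS_{,k+p}(Z)$ the restriction of $\leq_{\mathrm{Dlex}}$ is the order $\leq_{\operatorname{Dlex}_{k+p}}$, which is a well order by construction: it is built recursively from the degree-lexicographic order over a well-ordered (possibly infinite) alphabet, and deg-lex over a well-ordered alphabet satisfies the descending chain condition because for each fixed length the words form a finite lexicographic product of well orders (Lemma~\ref{sequence of order gives linear order}(b)). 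Hence the tail, being weakly $\leq_{\operatorname{Dlex}_{k+p}}$-decreasing, stabilizes. To repair your write-up, replace the finiteness claim for $S_{n,d,p}$ by this localization to $\frakS_{,k+p}(Z)$ and the well-orderedness of $\leq_{\operatorname{Dlex}_{k+p}}$, and add the stabilization of $\deg_{GD}$ and $\deg_{GP}$ before passing to $\leq_{\mathrm{Dlex}}$.
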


\begin{proof}Since $\leq_{\mathrm{Dlex}}$ is a linear order, so is $\leq_{\operatorname{PD}}$ by Lemma~\ref{lemma: prelinear order with descending chain	condition} and Lemma~\ref{sequence of order gives linear order}(a).

It suffices to	verify that $\leq_{\operatorname{PD}}$ satisfies the descending chain condition. Let $$v_1\geq_{\operatorname{PD}} v_2\geq_{\operatorname{PD}} v_3\geq_{\operatorname{PD}}\cdots \in\frakS(Z) $$ be a descending chain. By Lemma~\ref{lemma: prelinear order with descending chain	condition},  there exist $N\geq 1$  such that $$\deg_D(v_N)=\deg_D(v_{N+1})=\deg_D(v_{N+2})=\cdots=:k,$$
$$\deg_P(v_N)=\deg_P(v_{N+1})=\deg_P(v_{N+2})=\cdots=:p,$$ $$\deg_Z(v_N)=\deg_Z(v_{N+1})=\deg_Z(v_{N+2}) =\cdots$$  $$\deg_{GD}(v_N)=\deg_{GD}(v_{N+1})=\deg_{GD}(v_{N+2})=\cdots,$$ and $$\deg_{GP}(v_N)=\deg_{GP}(v_{N+1})=\deg_{GP}(v_{N+2})=\cdots.$$
	Thus all $v_i$ with $i\geq N$  belong to $\frakS_{,k+p}(Z)$. The restriction of the order $\leq_{\mathrm{Dlex}}$ to $\frakS_{,k+p}(Z)$ equals
	to the well order $\leq_{\operatorname{Dlex}_{k+p}}$, which by definition satisfies the descending chain condition, so the chain $v_1\geq_{\operatorname{PD}} v_2\geq_{\operatorname{PD}} v_3\geq_{\operatorname{PD}}\cdots$ stabilizes after finite steps.
\end{proof}

\begin{defn}[{\cite[Definition~5.6]{GGSZ14}}]
	A preorder   $\leq_{\alpha}$ on $\frakS(Z)$ is called bracket compatible (resp. left compatible, right compatible) if
	$$u \leq_{\alpha} v \Rightarrow\lfloor u\rfloor_D \leq_{\alpha}\lfloor v\rfloor_D \text{ and } \lfloor u\rfloor_P \leq_{\alpha}\lfloor v\rfloor_P , \text { (resp. } w u \leq_{\alpha} w v, ~u w \leq_{\alpha} v w ,\  \text { for all }  w \in \frakS(Z))\
 $$
\end{defn}

\begin{lem}[{\cite[Lemma~5.7]{GGSZ14}}]\label{monomial order lemma}
	A well order $\leq$ is a monomial order on $\frakS(Z)$  if and only if $ \leq$ is bracket compatible, left compatible and right compatible.
\end{lem}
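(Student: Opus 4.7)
The plan is to prove both implications separately. The forward direction follows by specializing the defining property of a monomial order to well-chosen $q \in \frakSstar(Z)$, while the backward direction requires a structural induction on $q$.

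For $(\Rightarrow)$, suppose $\leq$ is a monomial order. Taking $q = \lfloor \star \rfloor_D$ in the defining implication yields $u < v \Rightarrow \lfloor u \rfloor_D < \lfloor v \rfloor_D$, and similarly $q = \lfloor \star \rfloor_P$ gives $\lfloor u \rfloor_P < \lfloor v \rfloor_P$, so $\leq$ is bracket compatible. Taking $q = w \star$ for arbitrary $w \in \frakS(Z)$ yields left compatibility, and $q = \star w$ yields right compatibility.

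For $(\Leftarrow)$, assume $\leq$ is a well order that is bracket, left, and right compatible. I must show $u < v \Rightarrow q|_u < q|_v$ for every $q \in \frakSstar(Z)$. The argument proceeds by induction on the structural size of $q$ viewed as an element of $\frakS(Z \sqcup \{\star\})$, for instance the total number of generators and brackets appearing in $q$. The base case $q = \star$ is immediate since then $q|_u = u < v = q|_v$. For the inductive step, the grammar of the free $\Omega$-semigroup forces one of two decompositions: either $q = \lfloor q' \rfloor_\omega$ for some $q' \in \frakSstar(Z)$ and $\omega \in \Omega$, in which case the induction hypothesis gives $q'|_u < q'|_v$ and bracket compatibility yields $q|_u = \lfloor q'|_u \rfloor_\omega \leq \lfloor q'|_v \rfloor_\omega = q|_v$; or $q = q_1 \cdots q_n$ with $n \geq 2$ and a unique factor $q_j$ containing $\star$ (so $q_j \in \frakSstar(Z)$ and the remaining factors lie in $\frakS(Z)$), in which case the induction hypothesis combined with iterated applications of left and right compatibility (multiplying by $q_1 \cdots q_{j-1}$ on the left and $q_{j+1} \cdots q_n$ on the right) yields $q|_u \leq q|_v$.

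The main mild obstacle is to upgrade the weak inequality $q|_u \leq q|_v$ obtained at each inductive step to the strict inequality $q|_u < q|_v$. This is handled by the separate observation that distinctness of $u$ and $v$ propagates through $q$: injectivity of the operators $\lfloor \cdot \rfloor_\omega$ together with the cancellativity of the free semigroup $\frakS(Z \sqcup \{\star\})$ implies that $u \neq v$ forces $q|_u \neq q|_v$ for every $q \in \frakSstar(Z)$. Combining the weak inequality with distinctness yields strict inequality at every stage, closing the induction and completing the proof.
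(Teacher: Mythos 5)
Your proof is correct; the paper itself gives no argument for this lemma (it is quoted from \cite[Lemma~5.7]{GGSZ14}), and your two-step argument --- specializing $q$ to $\lfloor\star\rfloor_\omega$, $w\star$, $\star w$ for the forward direction, and structural induction on $q$ plus bracket/left/right compatibility for the converse --- is exactly the standard proof given there. You also correctly identify and close the one delicate point, upgrading $q|_u\leq q|_v$ to a strict inequality via the fact that substitution $u\mapsto q|_u$ is injective in the free $\Omega$-semigroup.
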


Now we can prove the main result of this section which is the main technical point of this paper.
\begin{thm}
	The well order $\leq_{\operatorname{PD}}$  is a monomial order on $\frakS(Z)$.
\end{thm}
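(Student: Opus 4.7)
By Lemma~\ref{monomial order lemma}, it suffices to verify that $\leq_{\operatorname{PD}}$ is bracket compatible, left compatible and right compatible. The plan is to inspect the six clauses in the definition of $\leq_{\operatorname{PD}}$ and to show that each of the component statistics $\deg_D,\deg_P,\deg_Z,\deg_{GD},\deg_{GP}$ and the residual order $\leq_{\mathrm{Dlex}}$ behaves appropriately under concatenation and bracketing.

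First I would record the elementary behavior of the simple statistics: $\deg_D$, $\deg_P$ and $\deg_Z$ are additive under concatenation, while $\deg_D(\lfloor u\rfloor_D)=\deg_D(u)+1$, $\deg_P(\lfloor u\rfloor_P)=\deg_P(u)+1$, and each of these three statistics is fixed by the bracket operator that does not involve it (and $\deg_Z$ is fixed by both brackets). For the more delicate statistics one proves the identities
\begin{align*}
\deg_{GD}(wu) &= \deg_{GD}(w) + \deg_D(w)\cdot\deg_Z(u) + \deg_{GD}(u),\\
\deg_{GP}(wu) &= \deg_{GP}(w) + \deg_P(w)\cdot\deg_Z(u) + \deg_{GP}(u) + \deg_P(u)\cdot\deg_Z(w),
\end{align*}
which follow by summing the contribution of each bracket in $w$ and each bracket in $u$ to $wu$: a right half-bracket $\rfloor_D$ of $w$ gains exactly $\deg_Z(u)$ further $Z$-elements to its right in $wu$, and similarly each $\lfloor-\rfloor_P$ of $w$ gains $\deg_Z(u)$ outside (and symmetrically for the brackets of $u$). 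One also checks that $\deg_{GD}$ and $\deg_{GP}$ are invariant under both $\lfloor-\rfloor_D$ and $\lfloor-\rfloor_P$: the newly created outer bracket contains all $Z$-elements of its argument, hence contributes $n-i=0$, while the inner brackets see the same $Z$-configuration as before.

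Armed with these formulas I would verify left compatibility by a case analysis on which clause of the definition witnesses $u<_{\operatorname{PD}} v$. If the discriminating statistic is $\deg_D$, $\deg_P$, or $\deg_Z$, pure additivity under concatenation gives $wu<_{\operatorname{PD}} wv$. If it is $\deg_{GD}$, then the earlier stages force $\deg_D(u)=\deg_D(v)$ and $\deg_Z(u)=\deg_Z(v)$, so the above formula yields $\deg_{GD}(wu)-\deg_{GD}(wv)=\deg_{GD}(u)-\deg_{GD}(v)<0$ while the preceding three statistics coincide on $wu$ and $wv$; the argument for $\deg_{GP}$ is identical, using that $\deg_P(u)=\deg_P(v)$ as well. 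Finally, if the discriminating statistic is $\leq_{\mathrm{Dlex}}$, then $\deg_D$ and $\deg_P$ coincide on $u,v$, so $u$ and $v$ sit in the same $\frakS_{,N}(Z)=\cals(\bar Z_N)$; here $\leq_{\operatorname{Dlex}_N}$ is by construction the standard degree-lexicographical order on a free semigroup, which is left compatible, giving $wu<_{\mathrm{Dlex}} wv$ with all earlier statistics equal. Right compatibility is established by the mirror argument (the symmetric form of the formulas for $\deg_{GD}$ and $\deg_{GP}$ works identically), and bracket compatibility follows from the single-variable behavior of each statistic together with the definition of $\leq_{\operatorname{Dlex}_{n+1}}$ via $\leq_{\operatorname{Dlex}_n}$ on $\lfloor\frakS_{,n}(Z)\rfloor_\omega$.

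The main technical obstacle is to pin down the additivity formulas for $\deg_{GD}$ and $\deg_{GP}$ under concatenation with the correct cross terms, and to notice that the conditions $u=_D v$, $u=_P v$, $u=_{dZ} v$ guaranteed by the preceding clauses are precisely what is needed for those cross terms to cancel when comparing $wu$ with $wv$ (or $uw$ with $vw$). Once these identities are in place, the six clauses propagate cleanly through concatenation and bracketing, and Lemma~\ref{monomial order lemma} finishes the proof.
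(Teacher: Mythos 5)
Your proposal is correct and follows essentially the same route as the paper: reduce to bracket, left and right compatibility via Lemma~\ref{monomial order lemma} and run a case analysis over the six clauses of $\leq_{\operatorname{PD}}$, handling the final clause through the construction of $\leq_{\operatorname{Dlex}_n}$ on the levels $\frakS_{,n}(Z)$. The only difference is that you make explicit the additivity and bracket-invariance formulas for $\deg_{GD}$ and $\deg_{GP}$ (with the correct cross terms cancelling because the earlier clauses force $\deg_D$, $\deg_P$, $\deg_Z$ to agree), which the paper leaves as ``obvious''; this is a welcome amount of extra detail but not a different argument.
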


\begin{proof}

Let $u\leq_{\operatorname{PD}}v$.
	It is obvious that preorders $\leq_{\mathrm{D}}$, $\leq_{\mathrm{P}}$ and $\leq_{\mathrm{dZ}}$ are bracket compatible, left compatible and right compatible. This solves the three cases  $u<_{\mathrm{D}} v$; $u=_{\mathrm{D}} v$,  $u<_{\dgp} v$;     $u=_{\mathrm{D}} v$, $u=_{\dgp} v$ and $u<_{\dgx} v$.

If $u=_{\mathrm{D}} v$, $u=_{\mathrm{P}} v, u=_{\mathrm{dZ}} v$  and   $u <_{\mathrm{GD}}v$,
  obviously    $\lfloor u\rfloor_D <_{\mathrm{GD}}\lfloor v\rfloor_D$, $\lfloor u\rfloor_P <_{\mathrm{GD}}\lfloor v\rfloor_P$ $uw <_{\mathrm{GD}}vw$ and   $wu <_{\mathrm{GD}}wv$ for $w\in \frakS(Z)$.  So $\lfloor u\rfloor_D <_{\mathrm{PD}}\lfloor v\rfloor_D$, $\lfloor u\rfloor_P <_{\mathrm{PD}}\lfloor v\rfloor_P$, $uw <_{\mathrm{PD}}vw$ and   $wu <_{\mathrm{PD}}wv$.

  The case that  $u=_{\mathrm{D}} v$, $u=_{\mathrm{P}} v, u=_{\mathrm{dZ}} v$, $u =_{\mathrm{GD}}v$ and 	$u <_{\mathrm{GP}}v$ is similar to the above one.
	
	It remains to  consider the case that  $u=_{\mathrm{D}} v$, $u=_{\mathrm{P}} v, u=_{\mathrm{dZ}} v$, $u =_{\mathrm{GD}}v$, $u =_{\mathrm{GP}}v$ and 	$u <_{\mathrm{Dlex}}v$.
	Let $n\geq \deg_D(u), \deg_P(u)$. Since $u,v\in \frakS_{,n}(Z)$, thus  $u\leq_{\operatorname{Dlex}_{n}} v$. By the fact that  the restriction of $\leq_{\operatorname{Dlex}_{n+1}} $ to $\lfloor\frakS_{,n}(Z)\rfloor_D$ is induced by  $ \leq_{\operatorname{Dlex}_n}$, we have $\lfloor u\rfloor_D \leq_{\operatorname{Dlex}_{n+1}} \lfloor v\rfloor_D$, $\lfloor u\rfloor_D \leq_{\operatorname{Dlex}} \lfloor v\rfloor_D$,  and $\lfloor u\rfloor_D \leq_{\operatorname{PD}} \lfloor v\rfloor_D$. Similarly $\lfloor u\rfloor_P \leq_{\operatorname{PD}} \lfloor v\rfloor_P$.
Let $w\in\frakS_{,m}(Z)$.  One can obtain   $uw\leq_{\operatorname{Dlex}_r}vw$ and $wu\leq_{\operatorname{Dlex}_r}wv$ for $r=\max \left\lbrace m, n \right\rbrace $, so $uw\leq_{\operatorname{PD}} vw$ and $wu\leq_{\operatorname{PD}}wv$.

We are done.
\end{proof}

Now let's move   to the unital case.
Now we extend $\leq_{\operatorname{PD}} $ from $\frakS(Z)$ to $\frakM(Z)$ by using
Remark~\ref{remark: monoids}.
\begin{defn}\label{udl}
	Let $Z$ be a set with a well order. Let  $\dagger_P$ (resp. $\dagger_D$ ) be a symbol which is understood to be  $\lfloor 1 \rfloor_P$ (resp.  $\lfloor 1 \rfloor_D$) and write $Z'=Z\sqcup \{\dagger_P ,\dagger_D\}$.  Consider the free operated semigroup $\frakS(Z')$  over the set  $Z'$. The well order on $Z$ extends to a  well order $\leq$ on $Z'$ by setting $\dagger_P>z>\dagger_D$, for any $z\in Z$. Besides, we impose  $\operatorname{deg}_{P}(\dagger_P) = 1$ and $\deg_{GP}(\dagger_P)=0$.
	Then the monomial order $\leq_{\operatorname{PD}} $ on $\frakS(Z')$ induces a well order $\leq_{\operatorname{uPD}}$ on $\frakM(Z)=\frakS(Z')\sqcup \{1\}$ (in which $\lfloor 1 \rfloor_P$ and $\lfloor 1 \rfloor_D$ is identified with $\dagger_P$ and $\dagger_D$ respectively), by setting $u>_{\operatorname{uPD}}1$ for any $u\in \frakS(Z')$.
\end{defn}


\begin{thm}
	The well order $\leq_{\operatorname{uPD}}$  is a monomial order on $\frakM(Z)$.
\end{thm}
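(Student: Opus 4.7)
The plan is to transfer the monomial-order property of $\leq_{\operatorname{PD}}$ on $\frakS(Z')$ over to $\leq_{\operatorname{uPD}}$ on $\frakM(Z)$ through the identification $\frakM(Z)=\frakS(Z')\sqcup\{1\}$ set up in Definition~\ref{udl}, in which $1$ is declared to be the unique minimum and $\lfloor 1\rfloor_P$, $\lfloor 1\rfloor_D$ are incarnated as the special letters $\dagger_P,\dagger_D\in Z'$. By the evident analogue of Lemma~\ref{monomial order lemma} in the monoid setting, it suffices to verify bracket compatibility, left compatibility and right compatibility for $\leq_{\operatorname{uPD}}$.

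When both operands $u<_{\operatorname{uPD}} v$ lie in $\frakS(Z')$, all three compatibilities are immediate consequences of the already established monomial order property of $\leq_{\operatorname{PD}}$ on $\frakS(Z')$, since each of $\lfloor u\rfloor_\omega$, $\lfloor v\rfloor_\omega$, $wu$, $wv$, $uw$, $vw$ again lies in $\frakS(Z')$. The only genuinely new cases are those in which one of the operands is $1$, or a multiplier equals $1$; the latter is trivial.

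So consider $u=1<_{\operatorname{uPD}} v$ with $v\in\frakS(Z')$. For left and right compatibility by some $w\in\frakS(Z')$, I need $w<_{\operatorname{PD}} wv$ and $w<_{\operatorname{PD}} vw$. Any nonempty element $v\in\frakS(Z')$ contributes to at least one of $\deg_D,\deg_P,\deg_Z$: a $D$-bracket or an occurrence of $\dagger_D$ (using the symmetric convention $\deg_D(\dagger_D)=1$) forces $\deg_D(v)\geq 1$; otherwise a $P$-bracket or an occurrence of $\dagger_P$ forces $\deg_P(v)\geq 1$; in the remaining case $v$ is a nonempty word in $Z$ and $\deg_Z(v)\geq 1$. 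Since $\leq_{\operatorname{PD}}$ compares $\deg_D$, then $\deg_P$, then $\deg_Z$ in succession, the strict inequality in the first relevant degree yields $w<_{\operatorname{PD}} wv$ and $w<_{\operatorname{PD}} vw$.

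The main technical point is bracket compatibility in this same case, namely $\dagger_P<_{\operatorname{PD}}\lfloor v\rfloor_P$ and $\dagger_D<_{\operatorname{PD}}\lfloor v\rfloor_D$ for every $v\in\frakS(Z')$. The conventions $\deg_P(\dagger_P)=1$, $\deg_D(\dagger_P)=0$, $\deg_Z(\dagger_P)=0$ (and the symmetric ones for $\dagger_D$) prescribed in Definition~\ref{udl} are precisely calibrated for this. For $\dagger_P$ versus $\lfloor v\rfloor_P$: if $\deg_D(v)>0$ then $\dagger_P<_{\mathrm{D}}\lfloor v\rfloor_P$; if $\deg_D(v)=0$ and $\deg_P(v)\geq 1$, then $\deg_P(\lfloor v\rfloor_P)=\deg_P(v)+1\geq 2>1=\deg_P(\dagger_P)$; otherwise $v$ is a nonempty word in $Z$ and $\deg_Z(\lfloor v\rfloor_P)\geq 1>0=\deg_Z(\dagger_P)$. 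The same three-way split, with the roles of $P$ and $D$ interchanged, handles $\dagger_D<_{\operatorname{PD}}\lfloor v\rfloor_D$, completing the verification.
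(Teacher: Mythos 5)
Your overall strategy --- reduce everything to the monomial-order property of $\leq_{\operatorname{PD}}$ on $\frakS(Z')$ and then treat separately the comparisons involving $1$ --- is the same as the paper's. The problem is that your verification of the new cases rests on degree conventions that are not those of Definition~\ref{udl}, and at the one genuinely delicate point the argument fails for the order actually defined there. The paper imposes only $\deg_P(\dagger_P)=1$ and $\deg_{GP}(\dagger_P)=0$; it does not decree $\deg_D(\dagger_D)=1$, and the dZ-degree entering $\leq_{\operatorname{PD}}$ on $\frakS(Z')$ counts occurrences of \emph{all} letters of $Z'$, so $\deg_{Z'}(\dagger_P)=\deg_{Z'}(\dagger_D)=1$ (this is exactly how the paper computes in the proof of Proposition~\ref{uDl}). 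With these conventions your third case for $\dagger_P<\lfloor v\rfloor_P$ breaks down: when $v$ is a single letter $z\in Z$ (or $v=\dagger_D$), one has $\deg_{Z'}(\lfloor v\rfloor_P)=1=\deg_{Z'}(\dagger_P)$, not a strict inequality, and the GD- and GP-degrees tie as well (the convention $\deg_{GP}(\dagger_P)=0$ exists precisely to produce a tie rather than a wrong-way inequality), so the comparison is decided only at the $\leq_{\operatorname{Dlex}}$ stage, where a letter of $Z'$ is smaller than any $P$-bracketed word. That Dlex step, $\lfloor x\rfloor_P>_{\operatorname{Dlex}}\lfloor 1\rfloor_P$, is the actual content of the paper's proof, and it is absent from yours.

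Moreover, if one really adopts your ``symmetric'' conventions ($\deg_D(\dagger_D)=1$ and a dZ-degree counting only letters of $Z$, so that $\dagger_P,\dagger_D$ count $0$), the resulting order on $\frakS(Z')$ is no longer $\leq_{\operatorname{PD}}$: for instance it would give $\dagger_D> z$ for $z\in Z$ (via the $D$-degree) and $z>\dagger_D\dagger_D$ (via the modified dZ-degree), both opposite to the paper's order. So wherever your argument invokes those conventions it is proving monomiality of a different order than the $\leq_{\operatorname{uPD}}$ of the statement. Note also that no convention on $\dagger_D$ is needed at all: with the paper's definitions, $\deg_D(\dagger_D)=0<1\leq\deg_D(\lfloor v\rfloor_D)$ settles $\dagger_D<_{\operatorname{uPD}}\lfloor v\rfloor_D$ at once, and your left/right-compatibility argument goes through verbatim once $\deg_Z$ is replaced by $\deg_{Z'}$, which strictly increases under multiplication by any nonempty word. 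The needed repair is thus local --- redo the $\dagger_P$ versus $\lfloor v\rfloor_P$ comparison by descending through the tied degrees to the Dlex comparison --- but as written the proposal has a genuine gap at exactly the step the unital order was engineered to handle.
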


\begin{proof} Obviously,
the well order $\leq_{\operatorname{uPD}}$ is bracket compatible on $\frakM(Z)\backslash \{1\}$.
Let  $x\in \frakM(Z)\backslash \{1\}$. By definition, $x>_{\operatorname{uPD}}1$. We have  $\lfloor x\rfloor_P>_{\mathrm{Dlex}}\lfloor 1\rfloor_P$ which implies    $\lfloor x\rfloor_P>_{\operatorname{uPD}}\dagger_P$.
  It is ready to see that $\lfloor x\rfloor_D>_{\operatorname{uPD}} x >_{\operatorname{uPD}}\dagger_D$.
Thus $\leq_{\operatorname{uPD}}$ is bracket compatible.

 Clearly,  $\leq_{\operatorname{uPD}}$ is left and right compatible.
\end{proof}

We  record  several important conclusions which will be useful later.

\begin{prop}\label{uDl}
For any $ u,v\in \frakM(Z)\backslash \{1\}$, we have

 	\begin{itemize}	
		\item [(a)] $\lfloor u\rfloor_P\lfloor 1\rfloor_P >_{\operatorname{uPD}} \lfloor u\lfloor 1\rfloor_P \rfloor_P \geq_{\operatorname{uPD}} \lfloor \lfloor u\rfloor_P \rfloor_P $,

		\item [(b)] $\lfloor 1\rfloor_P\lfloor v \rfloor_P>_{\operatorname{uPD}} \lfloor \lfloor v\rfloor_P \rfloor_P  \geq_{\operatorname{uPD}} \lfloor \lfloor 1\rfloor_P v \rfloor_P  $,

		\item [(c)] $\lfloor 1\rfloor_P\lfloor 1\rfloor_P>_{\operatorname{uPD}} \lfloor \lfloor 1\rfloor_P\rfloor_P$,

		\item [(d)] $\lfloor 1\rfloor_P\lfloor v\rfloor_D>_{\operatorname{uPD}} \lfloor \lfloor 1\rfloor_P v\rfloor_D$,

		\item [(e)] $\lfloor u\rfloor_D\lfloor 1\rfloor_P>_{\operatorname{uPD}} \lfloor u\lfloor 1\rfloor_P\rfloor_D$.
 	\end{itemize}
\end{prop}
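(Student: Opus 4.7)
The plan is to pull each of (a)--(e) back along the identification $\lfloor 1\rfloor_\omega\leftrightarrow\dagger_\omega$ of Definition~\ref{udl}, turning every assertion into an inequality in $\frakS(Z')$, and then walk down the priority chain $\leq_{\operatorname{D}},\leq_{\operatorname{P}},\leq_{\operatorname{dZ}},\leq_{\operatorname{GD}},\leq_{\operatorname{GP}},\leq_{\operatorname{Dlex}}$ defining $\leq_{\operatorname{PD}}$, stopping at the first preorder that discriminates.

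First I would observe that in every pair of monomials appearing in (a)--(e), relocating a single $\dagger_\omega$ into or out of an outer bracket preserves the total $D$- and $P$-bracket counts and preserves $\operatorname{deg}_{\operatorname{dZ}}$ (since $\dagger_P,\dagger_D\notin Z$). A short case check then shows that the rightmost $\rfloor_D$ of an outer $D$-bracket still lies to the right of the last $Z$-letter after the transformation, so $\operatorname{deg}_{\operatorname{GD}}$ is unchanged; and the conventions $\operatorname{deg}_P(\dagger_P)=1$ and $\operatorname{deg}_{\operatorname{GP}}(\dagger_P)=0$ built into Definition~\ref{udl} are tailored precisely so that every $P$-bracket on either side sees the same total $n$ and the same internal $Z$-count $i$, leaving $\operatorname{deg}_{\operatorname{GP}}$ invariant as well. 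Consequently, for each of the five assertions the preorders $\leq_{\operatorname{D}},\dots,\leq_{\operatorname{GP}}$ collapse to equality, and the verdict is delivered by $\leq_{\operatorname{Dlex}}$.

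At the $\operatorname{Dlex}$ level the remaining work reduces to a short length count via clause~(a) of Definition~\ref{Def: deg-lex order}. In each strict inequality of (a)--(e), the larger side is a top-level word of length $2$ while the smaller side is a single top-level bracket of length $1$, so clause~(a) decides at once. For the two non-strict middle-to-right comparisons in (a) and (b), both sides are single top-level brackets of the same type; one then passes to the induced order on $\lfloor\frakS_{,n-1}(Z')\rfloor_\omega$ and compares contents, where a length discrepancy (guaranteed by the assumption $u,v\in\frakM(Z)\setminus\{1\}$) settles the question by another application of clause~(a) of $\leq_{\operatorname{dlex}}$ one level down.

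The main technical obstacle is purely bookkeeping: one has to apply the Definition~\ref{udl} conventions consistently and keep in mind that $\dagger_P$ and $\dagger_D$ count as ordinary letters of $Z'$ for the $\operatorname{Dlex}$ step but are invisible to $\operatorname{deg}_{\operatorname{dZ}}$, $\operatorname{deg}_{\operatorname{GD}}$, and $\operatorname{deg}_{\operatorname{GP}}$. Once this is in place, each of (a)--(e) follows from the one-line $\operatorname{Dlex}$ comparison described above.
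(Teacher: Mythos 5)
Your central claim --- that $\dagger_P,\dagger_D$ are ``invisible'' to $\deg_{\mathrm{dZ}}$, $\deg_{\mathrm{GD}}$ and $\deg_{\mathrm{GP}}$, so that all five degree comparisons collapse to equalities and every verdict is delivered by $\leq_{\operatorname{Dlex}}$ --- is not what Definition~\ref{udl} says, and it is false. By that definition $\leq_{\operatorname{uPD}}$ restricted to $\frakM(Z)\setminus\{1\}$ is the order $\leq_{\operatorname{PD}}$ on $\frakS(Z')$ with $Z'=Z\sqcup\{\dagger_P,\dagger_D\}$, so the ``dZ''-degree in the priority chain is the $Z'$-letter count, and the letters entering the GD- and GP-counts are the letters of $Z'$; the only special conventions are $\deg_P(\dagger_P)=1$ and $\deg_{GP}(\dagger_P)=0$. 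Consequently $\deg_{Z'}(\lfloor 1\rfloor_P\lfloor 1\rfloor_P)=2>1=\deg_{Z'}(\lfloor\lfloor 1\rfloor_P\rfloor_P)$, which is what decides (c); $\deg_{Z'}(\lfloor u\lfloor 1\rfloor_P\rfloor_P)=\deg_{Z'}(\lfloor\lfloor u\rfloor_P\rfloor_P)+1$, which decides the second inequality in (a); $\deg_{GP}(\lfloor u\rfloor_P\lfloor 1\rfloor_P)=\deg_{GP}(\lfloor u\lfloor 1\rfloor_P\rfloor_P)+1$, which decides the first inequality in (a); and $\deg_{GD}(\lfloor u\rfloor_D\lfloor 1\rfloor_P)=\deg_{GD}(\lfloor u\lfloor 1\rfloor_P\rfloor_D)+1$, because in $\lfloor u\rfloor_D\lfloor 1\rfloor_P$ the closing $\rfloor_D$ has the letter $\dagger_P$ to its right --- exactly the point where your ``the rightmost $\rfloor_D$ still lies to the right of the last letter'' check fails, and exactly how (e) is proved in the paper. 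Since $\leq_{\operatorname{PD}}$ is decided by the \emph{first} preorder in the chain at which the two monomials differ, you are not entitled to jump to $\leq_{\operatorname{Dlex}}$ in (a), (c), (e): the comparison there must be read off from $\deg_{Z'}$, $\deg_{GP}$, $\deg_{GD}$ (as the paper does), and only (d) is genuinely a $\leq_{\operatorname{Dlex}}$ comparison. That your Dlex length count happens to point the same way in (a), (c), (e) does not repair the argument, because the equalities it is conditioned on do not hold.

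Moreover, your proposal does not establish the second inequality of (b) even on its own terms: the ``contents'' you compare one level down are $\lfloor v\rfloor_P$ (top-level length $1$) and $\lfloor 1\rfloor_P v$ (top-level length at least $2$), so your clause-(a) length count gives $\lfloor\lfloor v\rfloor_P\rfloor_P<_{\operatorname{Dlex}}\lfloor\lfloor 1\rfloor_P v\rfloor_P$, the opposite direction to the claimed $\geq$; you never verify directions but only assert that ``a length discrepancy settles the question''. (Note that the $\deg_{Z'}$ count also favours $\lfloor\lfloor 1\rfloor_P v\rfloor_P$ for this pair, so this particular comparison deserves care in any case; for the later applications what matters is only that $\lfloor 1\rfloor_P\lfloor v\rfloor_P$ dominates both other terms coming from $\phi_1(1,v)$, and that part of (b) is decided by $\deg_{Z'}$, not by Dlex.) In short, the proposal rests on a misreading of the conventions of Definition~\ref{udl}, identifies the wrong deciding preorders in (a), (c), (e), and in (b) its own computation contradicts the inequality it purports to prove.
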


\begin{proof}
Let  $u , v\in  \frakM(Z)\backslash \{1\}=\frakS(Z')$.
	 	\begin{itemize}	

		\item [(a)]
It is easy to see $\lfloor \lfloor u\rfloor_P \rfloor_P$ have lowest $\deg_{Z'}$ among $\lfloor u\rfloor_P\lfloor 1\rfloor_P, \lfloor u\lfloor 1\rfloor_P \rfloor_P , \lfloor \lfloor u\rfloor_P \rfloor_P $, and
 we also have $\deg_{GP}(\lfloor u\rfloor_P\lfloor 1\rfloor_P)>\deg_{GP}(\lfloor u\lfloor 1\rfloor_P \rfloor_P)$.

		\item [(b)] It is similar to $(a)$.

		\item [(c)] It follows from  $\deg_{Z'}(\lfloor 1\rfloor_P\lfloor 1\rfloor_P)>\deg_{Z'}(\lfloor \lfloor 1\rfloor_P\rfloor_P)$.

		\item [(d)] It can be deduced from  $\lfloor 1\rfloor_P\lfloor v\rfloor_D >_{\operatorname{Dlex}} \lfloor \lfloor 1\rfloor_P v\rfloor_D$ by  Definition~\ref{Def: deg-lex order}.

		\item [(e)]It holds because  $\deg_{GD}(\lfloor u\rfloor_D\lfloor 1\rfloor_P)>\deg_{GD}(\lfloor u\lfloor 1\rfloor_P\rfloor_D)$.
	 	\end{itemize}
\end{proof}

\section{Operator  polynomial identities and multi-operated  GS bases}\label{Section: multi-operated  GS bases}

In this section, we extend  the theory of   operated  GS bases due to  \cite{BokutChenQiu, GSZ13, GGSZ14, GaoGuo17}  from the case of single operator to    multiple operators case. The presentation is essentially contained in \cite{GGR15}.


\subsection{Operator  polynomial identities}\


In this subsection, we   give   some basic notions and facts   related to   operator  polynomial identities.
Throughout this section, $X$ denotes a set.

\begin{defn}
	   We call an element     $\phi(x_1,\dots,x_n)   \in \bk\frakS(X) $ (resp. $\bk\frakM (X)$) with $ n\geq 1, x_1, \dots, x_n\in X$ an operated polynomial identity (aka  OPI).

\end{defn}
\textit{{From now on, we always assume that OPIs are multilinear, that is,  they are linear in each $x_i$.}}

\begin{defn}  Let $\phi(x_1,\dots,x_n)$ be an   OPI. A  (unital)  $\Omega$-algebra $A=(A,\{P_\omega\}_{\omega\in \Omega})$ is said to satisfy the OPI $\phi(x_1,\dots,x_n)$ if
	$\phi(r_1,\dots,r_n)=0,$ for all $r_1,\dots,r_n\in A.$
In this case, $(A,\{P_\omega\}_{\omega\in \Omega})$ is called a (unital) $\phi$-algebra.  
	
	Generally, for a family $\Phi$ of OPIs, we call a  (unital)   $\Omega$-algebra    $(A, \{P_\omega\}_{\omega\in \Omega})$ a  (unital)  $\Phi$-algebra    if it is a  (unital) $\phi$-algebra for any $\phi\in \Phi$.
	Denote the  category of $\Phi$-algebras (resp. unital $\Phi$-algebras)  by $\Phi\zhx\Alg$ (resp. $\Phi\zhx\uAlg$).

\end{defn}

\begin{defn}   An $\Omega$-ideal  of an $\Omega$-algebra  is   an  ideal   of the associative algebra  closed under the action of the operators.
	The $\Omega$-ideal generated by a subset $S\subseteq A$ is denoted by $\left\langle S\right\rangle_{\mtOpAlg}$ (resp. $\left\langle S\right\rangle_{\mtuOpAlg}$).
\end{defn}
Obviously   the quotient of an $\Omega$-algebra (resp. unital $\Omega$-algebra) by an $\Omega$-ideal is naturally an $\Omega$-algebra (resp.  $\Omega$-unital algebra).

	From now on, $\Phi$ denotes  a family  of OPIs    in $\bk\frakS(X) $ or   $\bk\frakM (X)$.  For a set $Z$ and  a subset  $Y$    of $ \frakM (Z)$, introduce the subset $S_\Phi(Y)\subseteq\bk \frakM (Z)$ to be
\[S_{\Phi}(Y):=\{\phi(u_1,\dots,u_k)\ |\ u_1,\dots,u_k\in Y,~\phi(x_1,\dots,x_k)\in \Phi\}.\]

\subsection{Multi-operated GS  bases for   $\Phi$-algebras}\

In this subsection, operated GS basis theory is extended to algebras with  multiple operators  following  closely \cite{BokutChenQiu}.

\begin{defn}
	Let $Z$ be a set,  $\leq$ a linear order on $\frakM(Z)$ and $f \in \bk \frakM(Z)$.
	\begin{itemize}
		\item [(a)] Let $f\notin \bk$. The leading monomial of $f$ , denoted by $\bar{f}$, is the largest monomial appearing in $f$. The leading coefficient of $f$ , denoted by $c_f$, is the coefficient of $\bar{f}$ in $f$. We call $f$	monic with respect to $\leq$ if $c_f = 1$.
		\item [(a')] Let $f\in \bk$ (including the case $f=0$).  We define the leading monomial of $f$ to be $1$ and the	leading coefficient of $f$ to be $c_f=f$.
		\item [(b)] A subset $S\subseteq \bk\frakM(Z)$ is called monicized with respect to $\leq$,  if each nonzero element of $S$ has leading coefficient $1$.
	\end{itemize}
\end{defn}
Obviously, each subset $S\subseteq \frakM(Z)$ can be made monicized  if we divide  each nonzero  element by    its  leading coefficient.

We need  another notation.
Let $Z$ be a set. For  $u\in\frakM(Z)$ with $u\neq1$, as  $u$ can be uniquely written as a product $ u_1 \cdots u_n $ with $ u_i \in Z \cup \left \lfloor\frakM(Z)\right \rfloor_\Omega$ for $1\leq i\leq n$, call $n$ the breadth of $u$, denoted by $|u|$; for $u=1$, we define
$ |u| = 0 $.

\begin{defn}
	Let $\leq$ be a monomial order on $\frakS(Z)$ (resp. $\frakM(Z)$)  and $ f, g \in \bk\frakS(Z)$ (resp. $\bk\frakM(Z)$) be monic.
	\begin{itemize}
		\item[(a)] If there are $p,u,v\in \frakS(Z)$ (resp. $\frakM(Z)$) such that $p=\bar{f}u=v\bar{g}$ with max$\left\lbrace |\bar{f}| ,|\bar{g}|   \right\rbrace < \left| p\right| < |\bar{f}| +|\bar{g}|$, we call
		$$\left( f,g\right) ^{u,v}_p:=fu-vg$$
		the intersection composition of $f$ and $g$ with respect to $p$.
		\item[(b)] If there are $p\in\frakS(Z)$ (resp. $\frakM(Z)$) and $q\in\frakSstar(Z)$ (resp. $\frakMstar(Z)$) such that $p=\bar{f}=q|_{\bar{g}}$, we call
		$$\left( f,g\right)^q_p:=f-q|_g $$
		the inclusion composition of $f$ and $g$ with respect to $p$.
	\end{itemize}
\end{defn}
\begin{defn}
	Let $Z$ be a set and $\leq$ a monomial order on $\frakS(Z)$ (resp. $\frakM(Z)$). Let $\calg\subseteq \bk\frakS(Z) $ (resp. $\bk\frakM(Z)$).
	\begin{itemize}
		\item[(a)]An element $f\in \bk\frakS(Z) $ (resp. $\bk\frakM(Z) $) is called trivial modulo $(\calg,p)$ for $p\in \frakS(Z)$ (resp. $\frakM(Z)$) if
		$$f=\underset{i}{\sum}c_iq_i|_{s_i}\text{ with }q_i|_{\bar s_i}<p\text{, where }c_i\in \bk,\ q_i\in\frakSstar(Z)~(\text{resp.}~\frakMstar(Z))\  \mathrm{and}\  s_i\in \calg.$$
If this is the case, we write
$$
(f, g)_p \equiv 0 \bmod (\calg, p) .
$$
In general, for any  $u, v\in \frakS(Z)$ (resp. $\frakM(Z)$), $u \equiv v \bmod (\calg, p)$ means that $u-v=\left.\sum c_i q_i\right|_{s_i}$, with $q_i|_{\bar s_i}<p$, where $c_i\in \bk,\ q_i\in\frakSstar(Z)$ (resp. $\frakMstar(Z))$ and $s_i\in \calg$.
		\item[(b)]  The subset $\calg$ is called a  GS basis in $\bk\frakS(Z)$ (resp. $\bk\frakM(Z)$) with respect to $\leq$ if, for all pairs $f,g\in \calg$ monicized with respect to $\leq$, every  intersection composition of the form $\left( f,g\right) ^{u,v}_p$ is trivial modulo $(\calg,p)$,  and every inclusion composition of the form $\left( f,g\right)^q_p$ is trivial modulo $(\calg,p)$.
	\end{itemize}
\end{defn}

\textit{ {To distinguish from usual GS bases for associative algebras,  from now on, we shall rename       GS bases  in  multi-operated contexts by $\Omega$-GS bases.}}

\begin{thm}  \label{Thm: unital CD}
	(Composition-Diamond Lemma) Let $Z$ be a set, $\leq$ a monomial order on $\frakM(Z)$ and $\calg\subseteq \bk\frakM(Z) $. Then the following conditions are equivalent:
	\begin{itemize}
		\item[(a)] $\calg$ is an $\Omega$-GS basis in $\bk\frakM(Z) $.
		\item[(b)]  Denote
		$$\Irr(\calg):=\frakM(Z)\backslash \left\lbrace q|_{\overline{s}}~|~s\in \calg,\ \ q\in\frakMstar(Z)\right\rbrace. $$
		As a $\bk$-space, $\bk\frakM(Z) =\bk\Irr(\calg)\oplus\left\langle \calg \right\rangle_\mtuOpAlg$ and     $ \Irr(\calg) $ is a $\bk$-basis of $\bk\frakM(Z)\slash\left\langle\calg \right\rangle_\mtuOpAlg$.
	\end{itemize}
\end{thm}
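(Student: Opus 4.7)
The plan is to follow the classical proof scheme of the Composition-Diamond Lemma, adapting the single-operator version of \cite{BokutChenQiu, GSZ13, GGSZ14} to the multi-operator setting. Preliminarily, I would record the basic \emph{reduction step}: for any monicized $s \in \calg$ and any $q \in \frakMstar(Z)$, we have $q|_{\bar s} = q|_s - q|_{s-\bar s}$, and by the monomial-order axiom every monomial appearing in $q|_{s-\bar s}$ is strictly smaller than $q|_{\bar s}$. Since $\leq$ is a well order, iterating this rewriting terminates and yields, for every $h \in \bk\frakM(Z)$, a decomposition $h = h' + \sum_i c_i q_i|_{s_i}$ with $h' \in \bk\Irr(\calg)$ and $q_i|_{\overline{s_i}} \leq \overline{h}$. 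In particular $\bk\frakM(Z) = \bk\Irr(\calg) + \langle \calg\rangle_{\mtuOpAlg}$ regardless of whether $\calg$ is an $\Omega$-GS basis.

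The content of the direction (a) $\Rightarrow$ (b) is thus the directness of the sum. Here I would isolate and first prove the following \emph{key lemma}: if $\calg$ is an $\Omega$-GS basis, $s_1,s_2\in\calg$ (monicized), and $q_1,q_2\in\frakMstar(Z)$, $u,v\in\frakM(Z)$ satisfy $q_1|_{\bar{s_1}u} = q_2|_{v\bar{s_2}} = w$ (or the analogous nested equality $q_1|_{\bar{s_1}} = q_2|_{q|_{\bar{s_2}}} = w$), then $q_1|_{s_1 u} - q_2|_{v s_2} \equiv 0 \pmod{(\calg,w)}$. The proof is by case analysis on the relative position of the two occurrences of $\bar{s_1}$ and $\bar{s_2}$ inside $w$: in the disjoint case the difference is immediately trivial modulo $(\calg,w)$; in the overlapping (respectively nested) case the difference equals, up to applying some $q\in\frakMstar(Z)$ and multiplying on both sides, an intersection (respectively inclusion) composition of $s_1$ and $s_2$, which is trivial modulo $(\calg,\cdot)$ by hypothesis; one then invokes the bracket-, left- and right-compatibility of $\leq$ (Lemma~\ref{monomial order lemma}) to lift the triviality from the composition level to the level of $w$.

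Once this is in hand, suppose for contradiction that $0 \neq f \in \bk\Irr(\calg) \cap \langle\calg\rangle_{\mtuOpAlg}$. Write $f = \sum_i c_i q_i|_{s_i}$ with $c_i \neq 0$, $q_i \in \frakMstar(Z)$, $s_i \in \calg$, and choose such a presentation minimizing $w := \max_i q_i|_{\overline{s_i}}$ under $\leq$. If only one index attains $w$, then $w$ occurs in $f$ with nonzero coefficient, contradicting $f \in \bk\Irr(\calg)$. If at least two attain $w$, pair two of them and apply the key lemma to rewrite their sum as an expression strictly below $w$ modulo $(\calg,w)$, producing a new presentation of $f$ with strictly smaller maximum --- contradicting minimality. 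The converse (b) $\Rightarrow$ (a) is then immediate: any composition $(f,g)^{u,v}_p$ or $(f,g)^q_p$ lies in $\langle\calg\rangle_{\mtuOpAlg}$ and has leading monomial strictly less than $p$ because the leading terms cancel by construction, so reducing via $\calg$ inside the direct sum $\bk\Irr(\calg) \oplus \langle\calg\rangle_{\mtuOpAlg}$ expresses it as $\sum c_i q_i|_{s_i}$ with $q_i|_{\overline{s_i}} < p$, i.e.\ trivially modulo $(\calg,p)$.

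The main obstacle I anticipate is the case analysis in the key lemma when the two substitution sites sit inside nested brackets $\lfloor\cdot\rfloor_{\omega_1}$ and $\lfloor\cdot\rfloor_{\omega_2}$ with possibly different $\omega_i \in \Omega$: one must verify that the triviality produced at the composition level can be transported upward through arbitrary alternating brackets and multiplications without raising the leading monomial above $w$. This is exactly the multi-operator point where the bracket compatibility has to hold for \emph{every} $\omega \in \Omega$ simultaneously, which is guaranteed by the axioms of a monomial order on $\frakM(Z)$ but was not needed in the single-operator literature; once this is handled the remainder of the argument is a routine extension of the single-operator case.
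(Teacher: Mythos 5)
Your outline is correct and is essentially the standard Composition--Diamond argument of Bokut--Chen--Qiu, which is exactly what the paper relies on: Theorem~\ref{Thm: unital CD} is stated without proof, the multi-operated theory being imported from \cite{BokutChenQiu}. The only point to tighten is the minimality argument in (a)$\Rightarrow$(b): when three or more indices attain the maximal word $w$, one application of your key lemma does not lower $w$ itself, so you should minimize the pair $\bigl(w,\ \#\{i : q_i|_{\overline{s_i}}=w\}\bigr)$ lexicographically (or run a secondary induction on that number) rather than $w$ alone.
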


\begin{thm} \label{Thm: nonunital CD}
	(Composition-Diamond Lemma) Let $Z$ be a set, $\leq$ a monomial order on $\frakS(Z)$ and $\calg\subseteq \bk\frakS(Z) $. Then the following conditions are equivalent:
	\begin{itemize}
		\item[(a)] $\calg$ is an $\Omega$-GS basis in $\bk\frakS(Z) $.
		\item[(b)]   Denote
		$$\Irr(\calg):=\frakS(Z)\backslash \left\lbrace q|_{\overline{s}}~|~s\in \calg,\ \ q\in\frakSstar(Z)\right\rbrace. $$
		As a $\bk$-space, $\bk\frakS(Z) =\bk\Irr(\calg)\oplus\left\langle \calg \right\rangle_\mtOpAlg$ and     $ \Irr(\calg)$ is a $\bk$-basis of $\bk\frakS(Z)\slash\left\langle\calg \right\rangle_\mtOpAlg$.
	\end{itemize}
\end{thm}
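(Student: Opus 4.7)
The plan is to adapt the classical Composition-Diamond argument (Bergman's diamond lemma, extended to operated contexts by \cite{BokutChenQiu} and Guo et al.) to the multi-operated setting $\bk\frakS(Z)$, where ordinary subwords are replaced by substitutions $q|_u$ with $q\in\frakSstar(Z)$. Both implications proceed by induction on the well-founded monomial order $\leq$, and both crucially exploit that $\leq$ is bracket-, left- and right-compatible by Lemma~\ref{monomial order lemma}.

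For (a)$\Rightarrow$(b), I would first show that $\bk\frakS(Z)=\bk\Irr(\calg)+\langle\calg\rangle_{\mtOpAlg}$ by a reduction procedure: given $f\in\bk\frakS(Z)$, if $\bar f\in\Irr(\calg)$, strip off the leading term and recurse on $f-c_f\bar f$; otherwise $\bar f=q|_{\overline s}$ for some $s\in\calg$, $q\in\frakSstar(Z)$, and $f-c_f q|_{s}\in\langle\calg\rangle_{\mtOpAlg}$ has strictly smaller leading monomial than $f$, allowing the recursion. Well-foundedness of $\leq$ terminates the process. For directness, suppose $0\ne f\in\bk\Irr(\calg)\cap\langle\calg\rangle_{\mtOpAlg}$ and choose a representation $f=\sum_i c_iq_i|_{s_i}$ with $s_i\in\calg$, $q_i\in\frakSstar(Z)$, $c_i\ne 0$, minimising $w:=\max_i q_i|_{\overline{s_i}}$ (possible by well-ordering). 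If only one index attains $w$, then $w=\bar f\in\Irr(\calg)$, contradicting $w=q_i|_{\overline{s_i}}\notin\Irr(\calg)$. If two indices $i\ne j$ attain $w$, analyse the positions of the two occurrences of $\overline{s_i}$ and $\overline{s_j}$ inside $w$: they are either disjoint, overlapping, or nested. The disjoint case yields a direct cancellation lowering $w$; in the overlapping and nested cases one extracts the corresponding intersection composition $(s_i,s_j)^{u,v}_p$ or inclusion composition $(s_i,s_j)^{q}_p$, uses the $\Omega$-GS hypothesis to rewrite it as $\sum_\ell d_\ell r_\ell|_{t_\ell}$ with $r_\ell|_{\overline{t_\ell}}<p$, and transports this reduction through the outer context via the compatibility of $\leq$, contradicting the minimality of $w$.

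For (b)$\Rightarrow$(a), let $h$ be any intersection or inclusion composition of elements of $\calg$ at the ambiguity $p$. By construction $h\in\langle\calg\rangle_{\mtOpAlg}$ and $\bar h<p$. Applying the reduction procedure above writes $h=r+\sum_j c_j q_j|_{s_j}$ with $r\in\bk\Irr(\calg)$ and $q_j|_{\overline{s_j}}\le\bar h<p$; the directness half of (b) forces $r=0$, which is exactly the triviality of $h$ modulo $(\calg,p)$.

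The main technical obstacle is the overlapping/nested subcase in the proof of directness in (a)$\Rightarrow$(b): one must verify that a compositional reduction valid for the small ambiguity $p$ can be promoted to a reduction of $c_i q_i|_{s_i}+c_j q_j|_{s_j}$ placed in arbitrary outer contexts $q_i,q_j$, without raising the leading monomial above $w$. This is precisely where bracket-, left- and right-compatibility of $\leq$ intervene, forcing the hypotheses of Lemma~\ref{monomial order lemma}; it is also why the new monomial orders $\leq_{\operatorname{PD}}$ and $\leq_{\operatorname{uPD}}$ constructed in Section~\ref{section: order} are indispensable for the later applications to differential Rota-Baxter and integro-differential algebras.
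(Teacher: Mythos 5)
Your sketch is the standard Composition--Diamond argument, and that is exactly what this theorem rests on in the paper: the paper does not prove Theorem~\ref{Thm: nonunital CD} itself but recalls it from the multi-operated Gr\"obner--Shirshov theory of \cite{BokutChenQiu}, whose proof proceeds precisely as you describe (a reduction procedure against $\calg$ for the spanning part, a minimal-counterexample analysis of two terms with the same maximal monomial for directness, and the reduction procedure combined with directness for (b)$\Rightarrow$(a), all hinging on $\overline{q|_s}=q|_{\overline{s}}$, i.e.\ on bracket-, left- and right-compatibility). So in substance your route coincides with the cited proof.

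One bookkeeping repair is needed in your directness step: minimising only $w=\max_i q_i|_{\overline{s_i}}$ does not immediately give a contradiction. In the disjoint case the two offending terms are replaced by $(c_i+c_j)\,q_i|_{s_i}$ plus terms with monomials strictly below $w$, and in the overlapping/nested cases the GS hypothesis likewise rewrites $q_i|_{s_i}-q_j|_{s_j}$ into lower terms; in either situation the maximum $w$ need not drop, since the merged term (when $c_i+c_j\neq 0$) or other summands may still attain $w$. The standard fix is to minimise lexicographically the pair consisting of $w$ and the number of indices attaining $w$; with that refined induction parameter your case analysis closes as intended, and the rest of the proposal (including the (b)$\Rightarrow$(a) direction, which correctly uses only the reduction procedure and the directness of the decomposition) is sound.
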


\begin{defn}[{\cite[Definiton~2.21(a)]{GaoGuo17}}]\label{def: Omega-GS}\begin{itemize}
		\item [(a)]	Let  $\Phi\subseteq \bk\frakS(X)$ be  a  family   of OPIs. Let $Z$ be a set and $\leq$ a monomial
		order on $\frakS(Z)$. We call $\Phi$  $\Omega$-GS  on $\bk\frakS(Z)$  with respect to $\leq$ if $S_{\Phi}(\frakS(Z))$    is an  $\Omega$-GS  basis  in $\bk\frakS(Z)$   with respect to $\leq$.
		\item [(b)]Let   $\Phi\subseteq  \bk\frakM(X)$ be  a  family  of OPIs. Let $Z$ be a set and $\leq$ a monomial
		order on  $\frakM(Z)$. We call $\Phi$   $\Omega$-GS  on  $\bk\frakM(Z)$  with respect to $\leq$ if  $S_{\Phi}(\frakM(Z))$    is an $\Omega$-GS basis  in   $\bk\frakM(Z)$  with respect to $\leq$.
	\end{itemize}

\end{defn}


\subsection{Multi-operated GS basis for free  $\Phi$-algebras over   algebras}\

In this subsection, we consider multi-operated GS basis for free  $\Phi$-algebras over   algebras and  generalise the main result of \cite{QQWZ21} to   multi-operated cases.

%
%
%
We will use the following results without proof as they are counterparts in multi-operated setup of \cite[Propositions 4.8]{QQWZ21}.

\begin{prop}\label{free phi algebra}
	\begin{itemize}
		\item [(a)] Let $\Phi\subset\bk\frakS(X)$ and $A=\bk \cals (Z) \slash I_A$ an algebra. Then
		$$ \calf_{\Alg}^{\Phi\zhx\Alg}(A):=\bk\frakS(Z)\slash\left\langle  S_{\Phi}(\frakS(Z))\cup I_A\right\rangle_\mtOpAlg$$
		is the free $\Phi$-algebra generated by  $A$.
		\item [(b)] Let $\Phi\subset\bk\frakM(X)$ and $A=\bk \calm (Z) \slash I_A$ a unital algebra. Then
		 $$\calf_{\uAlg}^{\Phi\zhx\uAlg}(A):=\bk\frakM(Z)\slash\left\langle  S_{\Phi}(\frakM(Z))\cup I_A\right\rangle_\mtuOpAlg$$
		 is the free unital $\Phi$-algebra over $A$.
	\end{itemize}
\end{prop}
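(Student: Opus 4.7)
The plan is to verify the universal property of a free $\Phi$-algebra over $A$ directly, in the spirit of \cite[Proposition 4.8]{QQWZ21}, by a three-step construction: lift a morphism from $A$, extend to $\bk\frakS(Z)$, then descend to the quotient. I will concentrate on part (a); part (b) is then a routine transcription, replacing $\cals$ by $\calm$, $\frakS$ by $\frakM$, and $\mtOpAlg$ by $\mtuOpAlg$ throughout (and using Remark~\ref{remark: monoids} to handle the unit).

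First, I would exhibit a canonical algebra homomorphism $\iota_A\colon A\to \calf_{\Alg}^{\Phi\zhx\Alg}(A)$. Since $\bk\cals(Z)\hookrightarrow \bk\frakS(Z)$ is an algebra inclusion and the image of $I_A$ is contained in $\langle S_{\Phi}(\frakS(Z))\cup I_A\rangle_{\mtOpAlg}$, the composition $\bk\cals(Z)\hookrightarrow \bk\frakS(Z)\twoheadrightarrow \calf_{\Alg}^{\Phi\zhx\Alg}(A)$ factors through $A=\bk\cals(Z)/I_A$, producing $\iota_A$.

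Next, given any $\Phi$-algebra $B$ and any algebra homomorphism $g\colon A\to B$, I would construct the extension. Composing $g$ with the canonical surjection $\bk\cals(Z)\twoheadrightarrow A$ and restricting to $Z$ yields a set map $Z\to B$. Using the freeness of $\bk\frakS(Z)$ as an $\Omega$-algebra (which is part of the diagram of adjunctions recalled in the previous subsection), this set map extends uniquely to an $\Omega$-algebra homomorphism $\widetilde{g}\colon \bk\frakS(Z)\to B$. I then claim $\widetilde{g}$ vanishes on the two generating sets of the relevant ideal. On $I_A$: the restriction of $\widetilde{g}$ to $\bk\cals(Z)$ is an algebra homomorphism agreeing with $g\circ \pi_A$ on the generating set $Z$, so they coincide on all of $\bk\cals(Z)$; hence $\widetilde{g}(I_A)=0$. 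On $S_{\Phi}(\frakS(Z))$: for every $\phi(x_1,\dots,x_n)\in\Phi$ and every $u_1,\dots,u_n\in \frakS(Z)$, because $\widetilde{g}$ is an $\Omega$-algebra homomorphism (in particular commutes with each $P_\omega$) and $\phi$ is a multilinear operated polynomial, we have
$$\widetilde{g}\bigl(\phi(u_1,\dots,u_n)\bigr)=\phi\bigl(\widetilde{g}(u_1),\dots,\widetilde{g}(u_n)\bigr)=0,$$
the last equality being the hypothesis that $B$ is a $\Phi$-algebra. Consequently $\widetilde{g}$ kills the $\Omega$-ideal $\langle S_{\Phi}(\frakS(Z))\cup I_A\rangle_{\mtOpAlg}$ and descends to a $\Phi$-algebra morphism $\overline{g}\colon \calf_{\Alg}^{\Phi\zhx\Alg}(A)\to B$ with $\overline{g}\circ \iota_A=g$.

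Finally, uniqueness: any $\Phi$-algebra morphism $h\colon \calf_{\Alg}^{\Phi\zhx\Alg}(A)\to B$ with $h\circ \iota_A=g$ is determined on the image of $Z$, and then on the image of $\bk\frakS(Z)$ because it commutes with the operators $P_\omega$ and with multiplication; since this image generates $\calf_{\Alg}^{\Phi\zhx\Alg}(A)$, we conclude $h=\overline{g}$. The only non-mechanical point in the whole argument is the verification that $\widetilde{g}$ annihilates $S_{\Phi}(\frakS(Z))$, which rests precisely on the multilinearity convention imposed on OPIs together with the $\Omega$-compatibility of $\widetilde{g}$; once that is in place, everything else is formal from the freeness of $\bk\frakS(Z)$ and the definition of $\langle-\rangle_{\mtOpAlg}$.
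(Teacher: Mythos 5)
Your proof is correct and follows exactly the intended route: the paper itself gives no proof, deferring to the counterpart statement \cite[Proposition 4.8]{QQWZ21}, and your direct verification of the universal property (factoring $\iota_A$ through the quotient, extending $g$ via freeness of $\bk\frakS(Z)$ as an $\Omega$-algebra, checking the ideal $\left\langle S_{\Phi}(\frakS(Z))\cup I_A\right\rangle_{\mtOpAlg}$ is killed, then uniqueness) is precisely that standard argument transported to the multi-operated setting. The only cosmetic remark is that the identity $\widetilde{g}(\phi(u_1,\dots,u_n))=\phi(\widetilde{g}(u_1),\dots,\widetilde{g}(u_n))$ follows from freeness of $\bk\frakS(X)$ applied to the two substitution morphisms, without needing multilinearity.
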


As in \cite{QQWZ21}, we consider the following question:
\begin{Ques}  Let $A$ be  a (unital) algebra  together   with a   Gr\"obner-Shirshov basis $G$. Assume that a set $\Phi$ of operated  polynomial identities is   $\Omega$-GS in the sense of  Definition~\ref{def: Omega-GS}.
Considering the free (unital)  $\Phi$-algebra $B$ over $A$,   when will
  the union ``$\Phi\cup G$" be  a  $\Omega$-GS  basis  for $B$?
\end{Ques}

It is surprising that the answer of the corresponding question given in
\cite{QQWZ21} can be generalised to multi-operated case without much modifications.


\begin{thm} \label{Thm: GS basis for free unital Phi algebra over unital alg}
	Let $X$ be a set and $\Phi\subseteq \bk\frakM (X)$ a system of OPIs. Let $A=\bk \calm (Z) \slash I_A$ be a unital algebra with generating set $Z$.
	Assume that $\Phi$ is $\Omega$-GS  on $Z$ with respect to a monomial order $\leq$ in $\frakM (Z)$ and that  $G$ is a GS    basis of $I_{A}$ in $\bk \calm (Z)$ with respect to the restriction of $\leq$ to $ \calm(Z)$.

	Suppose that the leading monomial  of any OPI $\phi(x_1, \dots, x_n)\in \Phi$ has no subword in $\calm(X)\backslash X$,  and  that   $\phi(u_1, \dots, u_n) $   vanishes or its  leading monomial is      $\overline{\phi}(u_1, \dots, u_n) $  for all $u_1, \dots, u_n\in \frakM  (Z)$.  Then $S_{\Phi}(\frakM (Z))\cup G$ is an $\Omega$-GS basis of $\left\langle S_{\Phi}(\frakM (Z))\cup I_A\right\rangle_{\mtuOpAlg}$ in $\bk\frakM (Z)$ with respect to $\leq$.
\end{thm}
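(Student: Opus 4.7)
The strategy is to verify the hypotheses of the Composition-Diamond Lemma (Theorem~\ref{Thm: unital CD}): every intersection and inclusion composition of monic pairs drawn from $S_\Phi(\frakM(Z))\cup G$ must be trivial modulo $(S_\Phi(\frakM(Z))\cup G,p)$ for the relevant ambiguity $p$. I split into three cases according to the origin of the pair.

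If both elements lie in $G$, the composition computed inside $\bk\calm(Z)$ is trivial modulo $(G,p)$ because $G$ is a Gr\"obner-Shirshov basis of $I_A$ in $\bk\calm(Z)$. Embedding $\bk\calm(Z)\hookrightarrow\bk\frakM(Z)$ preserves triviality, since the restriction of $\leq$ to $\calm(Z)$ is exactly the monomial order used for $G$ and $\calm^\star(Z)\subseteq\frakMstar(Z)$. If both elements lie in $S_\Phi(\frakM(Z))$, the hypothesis that $\Phi$ is $\Omega$-GS on $\bk\frakM(Z)$ yields triviality modulo $S_\Phi(\frakM(Z))\subseteq S_\Phi(\frakM(Z))\cup G$ by Definition~\ref{def: Omega-GS}.

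The substantial case is the mixed one: $s\in G$ (so $\bar s\in\calm(Z)$) against $\phi(u_1,\dots,u_n)\in S_\Phi(\frakM(Z))$, whose leading monomial by hypothesis equals $\bar\phi(u_1,\dots,u_n)$ whenever nonzero. The crucial structural observation is that the assumption ``$\bar\phi$ contains no subword in $\calm(X)\setminus X$'' forces the outer skeleton of $\bar\phi(u_1,\dots,u_n)$ to alternate the substituted factors $u_i$ with bracketed tokens $\lfloor\cdots\rfloor_\omega$. Since $\bar s$ is bracket-free, any inclusion of $\bar s$ inside $\bar\phi(u_1,\dots,u_n)$ lies inside a single factor $u_{i_0}$ (an inclusion of $\bar\phi(\vec u)$ inside $\bar s$ would force $\bar\phi$ itself to be a single variable, a degenerate situation). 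Likewise, any intersection overlap must meet $\bar\phi(u_1,\dots,u_n)$ where a single $u_{i_0}$ begins or ends. In both subcases one writes $u_{i_0}$ in the factored form exposing where $\bar s$ occurs, uses multilinearity of $\phi$ to turn the composition into $\phi$ applied to tuples where $u_{i_0}$ is replaced by $u_{i_0}-q|_s$, and then invokes the Gr\"obner-Shirshov property of $G$ to trivialize the resulting ``local'' composition of $s$ with the bracket-free piece of $u_{i_0}$.

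The main obstacle will be bookkeeping the strict decrease of leading monomials after these substitutions, and this is precisely where the hypothesis $\overline{\phi(u_1,\dots,u_n)}=\bar\phi(u_1,\dots,u_n)$ becomes indispensable: it ensures that whenever we replace $u_{i_0}$ by a $\bk$-linear combination of strictly smaller monomials, each resulting term $\phi(\dots)$ has its leading monomial strictly below $\bar\phi(u_1,\dots,u_n)$. Combining this with the bracket-, left- and right-compatibility of $\leq$ (cf.\ Lemma~\ref{monomial order lemma}) propagates the decrease through the ambient $q\in\frakMstar(Z)$ and delivers the triviality of the mixed composition modulo $(S_\Phi(\frakM(Z))\cup G,p)$. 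Once all three cases are settled, Theorem~\ref{Thm: unital CD} yields the conclusion.
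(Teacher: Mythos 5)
Your overall plan is the right one and matches the argument the paper invokes (its proof simply defers to \cite[Theorem 5.9]{QQWZ21}): verify the Composition--Diamond Lemma (Theorem~\ref{Thm: unital CD}) for $S_\Phi(\frakM(Z))\cup G$, dispose of the $G$--$G$ and $\Phi$--$\Phi$ cases by the two GS hypotheses, and use the no-subword condition to localize any overlap of $\bar s\in\calm(Z)$ with $\bar\phi(u_1,\dots,u_n)$ inside a single substituted slot; the mixed \emph{inclusion} case is then exactly the Lemma~\ref{including}-type computation, with the hypothesis $\overline{\phi(u_1,\dots,u_n)}=\bar\phi(u_1,\dots,u_n)$ controlling the decrease of leading monomials.

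However, your treatment of the mixed \emph{intersection} case has a genuine gap. There $\bar s$ is not a subword of $u_{i_0}$: only a suffix $b$ of $u_{i_0}=ab$ coincides with a prefix of $\bar s=bc$, and $\bar s$ sticks out past $\bar\phi(u_1,\dots,u_n)$, so "replacing $u_{i_0}$ by $u_{i_0}-q|_s$'' does not typecheck, and the trailing word $c$ cannot simply be absorbed into the $i_0$-th argument of $\phi$: while $\bar\phi$ has $x_{i_0}$ as its rightmost factor (this is what the overlap forces), the non-leading monomials of $\phi$ need not, so $\phi(u_1,\dots,u_{i_0},\dots,u_n)\,c\neq\phi(u_1,\dots,u_{i_0}c,\dots,u_n)$ and the naive expansion leaves a difference of bare monomials that is not visibly of the form $\sum c_iq_i|_{s_i}$ with $q_i|_{\bar s_i}<p$. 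The standard repair is to insert the element $f':=\phi(u_1,\dots,u_{i_0}c,\dots,u_n)\in S_\Phi(\frakM(Z))$, whose leading monomial is $p$: then $fc-dg=\bigl(fc-f'\bigr)+\bigl(f'-dg\bigr)$, where $f'-fc=(f',f)^{\star c}_p$ is an inclusion composition \emph{internal to} $S_\Phi(\frakM(Z))$, trivial precisely because $\Phi$ is assumed $\Omega$-GS, and $f'-dg$ is a mixed inclusion composition with $\bar s$ sitting inside the slot content $a\bar s$, handled by the multilinearity computation as before. Your sketch instead ``invokes the Gr\"obner--Shirshov property of $G$ to trivialize the resulting local composition of $s$ with the bracket-free piece of $u_{i_0}$'' --- but that piece of $u_{i_0}$ is just a monomial, not an element of $G$, so there is no such composition and the GS property of $G$ plays no role in the mixed case (it is used only for $G$--$G$ compositions); the hypothesis you actually need here, and do not use, is the $\Omega$-GS property of $\Phi$. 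Finally, the subcase you dismiss as degenerate ($\bar\phi(u_1,\dots,u_n)$ included in $\bar s$, forcing $\bar\phi\in X$) should either be argued or explicitly excluded rather than waved away.
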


\begin{proof}

The proof of  \cite[Theorem~5.9]{QQWZ21} carries verbatim over multi-operated case, because it reveals that   the key point   is that the leading monomial  of any OPI $\phi(x_1, \dots, x_n)\in \Phi$ has no subword in $\calm(X)\backslash X$.

For details, see the proof of \cite[Theorem~5.9]{QQWZ21}.
\end{proof}

%

There exists a nonunital version of the above  result, which is also a multi-operated version of \cite[Theorem 2.15]{QQZ21}. 

\begin{thm}\label{Thm: GS basis for free nonunital Phi algebra over nonunital alg}
	Let $X$ be a set and $\Phi\subseteq \bk\frakS(X)$ a system of OPIs. Let $A=\bk \cals (Z) \slash I_A$ be an algebra with generating set $Z$.
	Assume that $\Phi$ is $\Omega$-GS  on $Z$ with respect to a monomial order $\leq$ in $\frakS(Z)$ and that  $G$ is a GS    basis of $I_{A}$ in $\bk \cals(Z)$ with respect to the restriction of $\leq$ to $ \cals(Z)$.
	
	Suppose that the leading monomial  of any OPI $\phi(x_1, \dots, x_n)\in \Phi$ has no subword in $\cals(X)\backslash X$,  and  that  for all $u_1, \dots, u_n\in \frakS (Z)$,  $\phi(u_1, \dots, u_n) $   vanishes or its  leading monomial is      $\overline{\phi}(u_1, \dots, u_n) $.  Then $S_{\Phi}(\frakS(Z))\cup G$ is an $\Omega$-GS basis of $\left\langle S_{\Phi}(\frakS(Z))\cup I_A\right\rangle_\mtOpAlg$ in $\bk\frakS(Z)$ with respect to $\leq$.

\end{thm}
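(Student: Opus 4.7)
The plan is to invoke the Composition-Diamond Lemma (Theorem~\ref{Thm: nonunital CD}) and verify, for every pair of monicized elements $f_1,f_2\in S_{\Phi}(\frakS(Z))\cup G$, that each intersection and inclusion composition $(f_1,f_2)^{u,v}_p$ or $(f_1,f_2)^{q}_p$ is trivial modulo $S_{\Phi}(\frakS(Z))\cup G$ relative to $p$. The compositions split naturally into three classes: (i) both $f_1,f_2$ lie in $G$; (ii) both lie in $S_{\Phi}(\frakS(Z))$; (iii) one lies in $G$ and the other in $S_{\Phi}(\frakS(Z))$. I would dispose of the three classes in turn.

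Class (i) is immediate: every $g\in G$ has $\bar g\in\cals(Z)$, so any composition between two elements of $G$ is already a composition inside $\bk\cals(Z)$, which is trivial modulo $G$ there since $G$ is a GS basis of $I_A$; the witnessing expression survives in $\bk\frakS(Z)$ modulo $S_{\Phi}(\frakS(Z))\cup G$. Class (ii) is immediate from the hypothesis that $\Phi$ is $\Omega$-GS on $\bk\frakS(Z)$, i.e.\ that $S_{\Phi}(\frakS(Z))$ is itself an $\Omega$-GS basis. The crux is Class (iii), where the subword-freeness hypothesis on $\bar\phi$ enters: if $s=\phi(u_1,\dots,u_n)\in S_{\Phi}(\frakS(Z))$, then $\bar s=\bar\phi(u_1,\dots,u_n)$ by assumption, and since $\bar\phi$ contains no subword in $\cals(X)\setminus X$, every contiguous pure-$Z$ subword of $\bar s$ is forced to lie entirely inside a single substituted $u_i$.

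Consequently, for any $g\in G$ the pure-$Z$ leading word $\bar g$ can overlap $\bar s$ only inside one $u_i$, so both intersection and inclusion compositions $(g,s)^\bullet_p$ and $(s,g)^\bullet_p$ reduce to compositions of $g$ against $u_i$. Since $G$ is a GS basis of $I_A$, the overlap inside $u_i$ is trivial modulo $G$ in $\bk\cals(Z)$, producing a rewriting of $u_i$ as a $\bk$-linear combination of monomials strictly less than $u_i$. Substituting this rewriting back into $\phi$ by multilinearity, and using the hypothesis that $\phi(v_1,\dots,v_n)$ either vanishes or has leading monomial $\bar\phi(v_1,\dots,v_n)$, produces an expression of the composition as a $\bk$-linear combination of terms $q|_{g'}$ with $g'\in G$ and $q|_{s'}$ with $s'\in S_{\Phi}(\frakS(Z))$ satisfying $q|_{\overline{g'}},\,q|_{\overline{s'}}<p$, which is exactly triviality modulo $S_{\Phi}(\frakS(Z))\cup G$ relative to $p$.

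The main obstacle is the combinatorial bookkeeping in Class~(iii), which is aggravated by the nonunital context: since $1\notin\cals(Z)$, in any factorisation $u_i=v_1\bar g v_2$ the prefix or suffix may need to be empty, and these degenerate situations must be tracked when manufacturing the placeholders in $\frakSstar(Z)$ that witness triviality. This is precisely the adaptation already carried out in \cite[Theorem~5.9]{QQWZ21} for the unital case (which transfers to the multi-operated setting verbatim, as recorded in Theorem~\ref{Thm: GS basis for free unital Phi algebra over unital alg}) and in \cite[Theorem~2.15]{QQZ21} for the nonunital single-operator case; the proof to be given follows the same scheme with $\frakM$ replaced by $\frakS$ throughout and a single operator replaced by the family indexed by $\Omega$.
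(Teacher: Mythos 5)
Your proposal is correct and follows essentially the same route as the paper: the paper proves this result (and its unital counterpart, Theorem~\ref{Thm: GS basis for free unital Phi algebra over unital alg}) by observing that the proofs of \cite[Theorem~5.9]{QQWZ21} and \cite[Theorem~2.15]{QQZ21} carry over verbatim to the multi-operated setting, the key point being exactly the hypothesis that $\overline{\phi}$ has no subword in $\cals(X)\backslash X$ together with the condition that $\phi(u_1,\dots,u_n)$ vanishes or has leading monomial $\overline{\phi}(u_1,\dots,u_n)$. Your three-class analysis of compositions, with the mixed class resolved by confining overlaps with $\bar g$ to a single substituted $u_i$, is precisely the mechanism of those cited proofs.
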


\section{Free differential Rota-Baxter algebras over algebras }\label{Section: differential Rota-Baxter algebras}

In this section, we apply Theorems~\ref{Thm: GS basis for free unital Phi algebra over unital alg} and \ref{Thm: GS basis for free nonunital Phi algebra over nonunital alg} to differential Rota-Baxter algebras.

From now on, let  $\Omega=\{D, P\}$, fix a set $X=\{x, y\}$ with two elements such that variables in OPIs will take values in $X$.
When talking about algebras or reductions of OPIs, fix a set $Z$ and we understand that variables in OPIs will be replaced by elements of
$\frakS(Z)$ or $\frakM(Z)$.

We first recall  the definition  of differential Rota-Baxter algebras. We use $D(~)$ and $P(~)$ instead of the linear operators $\lfloor ~ \rfloor_D$ and $\lfloor ~ \rfloor_P$.

\begin{defn}[{\cite[Definition~2.1]{GGR15}}]\label{differentila rota-baxter algebras}
 Let $\lambda \in \bk$ be fixed.
\begin{itemize}
		\item[(a)] A (unital) differential $\bk$-algebra of weight $\lambda$ (also called a (unital) $\lambda$-differential $\bk$-algebra) is a (unital) associative $\bk$-algebra $R$ together with a linear operator $D: R \rightarrow R$ such that
$$
 \quad D(u v)=D(u) v+u D(v)+\lambda D(u) D(v) \quad \text { for all } u, v \in R;
$$
when $R$ has a unity $1$,  it is asked that $D(1)=0$.

\item[(b)] A Rota-Baxter $\bk$-algebra of weight $\lambda$ is an associative $\bk$-algebra $R$ together with a linear operator $P: R \rightarrow R$ such that
$$
P(u) P(v)=P(u P(v))+P(P(u) v)+\lambda P(u v) \quad \text { for all } u, v \in R.
$$
\item[(c)] A (unital) differential Rota-Baxter $\bk$-algebra of weight $\lambda$ (also called a  (unital) $\lambda$-differential Rota-Baxter $\bk$-algebra) is a  (unital) differential k-algebra $(R, D)$ of weight $\lambda$ and a Rota-Baxter operator $P$ of weight $\lambda$ such that
$$
D \circ P=\text { id }.
$$

\end{itemize}
\end{defn}

When we consider free  differential Rota-Baxter algebras on algebras, it is disappointing  to see that the traditional order (see \cite{BokutChenQiu})  would  not meet the condition of Theorems~\ref{Thm: GS basis for free unital Phi algebra over unital alg} and \ref{Thm: GS basis for free nonunital Phi algebra over nonunital alg}.  This is the intention  of the new monomial orders $\leq_{\operatorname{PD}}$ and $\leq_{\operatorname{uPD}}$ introduced in Section~\ref{section: order}.

\medskip

\subsection{ Case of nonunital algebras with $\lambda\neq 0$} \label{Subsection: Case of lambda non zero}\


Assume in this subsection that $\lambda\neq 0$. Denote
\begin{itemize}
     \item[(1)]  $\phi_1(x,y) = P(x) P(y)-P(x P(y))-P(P(x) y)-\lambda P(x y)$,
   \item[(2)]  $\phi_2(x,y) = D(x)D(y) + \lambda^{-1}D(x)y + \lambda^{-1}xD(y) - \lambda^{-1}D(xy)$,
    \item[(3)]  $\phi_3(x) = D(P(x))-x$.
\end{itemize}

We first consider   nonunital free differential Rota-Baxter algebras  on algebras.


\begin{prop}For any $u, v \in \frakS(Z)$,  the leading monomials of $\phi_1(u, v)$,  $\phi_2(u, v)$ and $\phi_3(u)$ under $\leq_{\operatorname{PD}}$ are respectively
 $ P(u)P(v),  D(u)D(v)$ and $D(P(u))$.

\end{prop}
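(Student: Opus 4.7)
The plan is to handle each of $\phi_1,\phi_2,\phi_3$ separately by listing the monomials of the substituted identity and ranking them against the cascade $\leq_{\mathrm{D}},\leq_{\mathrm{P}},\leq_{\mathrm{dZ}},\leq_{\mathrm{GD}},\leq_{\mathrm{GP}},\leq_{\mathrm{Dlex}}$ that defines $\leq_{\operatorname{PD}}$. Two of the three cases will be settled by the very first comparison in the cascade. For $\phi_3(u)=D(P(u))-u$, one has $\deg_D(D(P(u)))=\deg_D(u)+1>\deg_D(u)$, so $D(P(u))$ wins by $\leq_{\mathrm{D}}$. For $\phi_2(u,v)$, among the four monomials $D(u)D(v)$, $D(u)v$, $uD(v)$, $D(uv)$ only the first has $\deg_D=\deg_D(u)+\deg_D(v)+2$ while the remaining three have $\deg_D=\deg_D(u)+\deg_D(v)+1$, so again $\leq_{\mathrm{D}}$ alone identifies $D(u)D(v)$ as the leader.

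The real work is $\phi_1(u,v)$. All four monomials $P(u)P(v), P(uP(v)), P(P(u)v), P(uv)$ share the same $\deg_D$. A pass with $\leq_{\mathrm{P}}$ eliminates $P(uv)$ (which contributes one fewer $P$-bracket) and leaves three candidates of equal $\deg_P$ and $\deg_Z$. I would then observe that the left-to-right sequence of $Z$-letters is identical across the three remaining words and that each right $D$-bracket occupies the same gap between letters, so the $\deg_{GD}$ coincide as well. The distinguishing invariant is $\deg_{GP}$: writing $a=\deg_Z(u)$ and $b=\deg_Z(v)$, both at least $1$ because $u,v\in\frakS(Z)$, and noting that the total letter count $n=a+b$ is common to all three words, every $P$-bracket sitting strictly inside $u$ or inside $v$ contributes the same value $n-j$ in each of the three monomials; the outer/enclosing $P$-brackets, however, contribute $b+a$ in $P(u)P(v)$, only $0+a$ in $P(uP(v))$, and only $0+b$ in $P(P(u)v)$. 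Hence $P(u)P(v)$ strictly dominates the other two, completing the case.

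I expect the delicate step to be the $\deg_{GP}$ comparison for $\phi_1$: one must track that nesting one $P$-bracket inside another leaves the GP-contributions of already-enclosed $P$-brackets unchanged, but collapses what was an outer $P$-bracket (contributing its full complement of non-contained letters) into a bracket containing all of $n$ letters and thus contributing $0$. It is precisely here that the non-unital hypothesis $u,v\in\frakS(Z)$, forcing $a,b\geq 1$, is used to obtain strict inequality; with $a=0$ or $b=0$ we would get only a weak comparison. This is also the point where the earlier orders from \cite{BokutChenQiu} fail to give the desired leading monomial, which is exactly what motivated the construction of $\leq_{\operatorname{PD}}$ in Section~\ref{section: order}.
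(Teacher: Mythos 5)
Your proposal is correct and follows essentially the same route as the paper: $\phi_2$ and $\phi_3$ are settled by $\deg_D$ alone, $P(uv)$ is eliminated by $\deg_P$, and the remaining three monomials of $\phi_1(u,v)$ are separated by the $\deg_{GP}$ comparison, with differences $\deg_Z(v)$ and $\deg_Z(u)$ exactly as in the paper's computation. Your added remarks --- that the inner $P$-brackets and the $\deg_{GD}$ contributions are unaffected, and that strictness uses $\deg_Z(u),\deg_Z(v)\geq 1$ for $u,v\in\frakS(Z)$ --- only make explicit what the paper leaves implicit.
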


\begin{proof}
Let $u_1, \cdots, u_n$ and $v_1, \cdots, v_m$ be all the elements of $Z$ occurring in $u$ and $v$ from left to right.


For $\phi_1(u,v) = P(u) P(v)-P(u P(v))-P(P(u) v)-\lambda P(u v)$, we have $\deg_P(P(uv))$ is smaller than those of the other three  terms, while  the $\deg_D, \deg_Z$ and $\deg_{GD}$ of the other  elements are the same. And one can see  $$\deg_{GP}(P(u)P(v))-\deg_{GP}(P(uP(v)))=m>0,$$  $$\deg_{GP}(P(u)P(v))-\deg_{GP}(P(P(u)v))=n>0,$$ so   the leading monomial of $\phi_1(u, v)$ is $P(u)P(v)$.

The statements about  $\phi_2(u, v)$ and $\phi_3(u)$ are obvious by comparing   $\deg_D$.
\end{proof}

Now let   $$\dRB':=\left\lbrace\phi_1(x, y) , \phi_2(x,y) , \phi_3(x) \right\rbrace. $$
However,  $\dRB'$  is not  $\Omega$-GS in $\bk\frakS(Z)$ with respect to $\leq_{\operatorname{PD}}$.
\begin{exam}\label{phi_4}
For $u, v\in \frakS(Z)$, let
 $$\begin{array}{rcl}  f&=& \phi_2(P(u),v)=D(P(u))D(v) + \lambda^{-1}D(P(u))v + \lambda^{-1}P(u)D(v) - \lambda^{-1}D(P(u)v),\\\
   g &=& \phi_3(u)=D(P(u))-u,\\
   q&=&\star D(v),\\
   p&=&D(P(u))D(v)=\bar{f}=\left.q\right|_{\bar{g}}. \end{array}$$
Then $$
(f,g)_p^q=f-\left.q\right|_{g}\equiv \lambda^{-1}( P(u)D(v)-D(P(u)v)+uv+\lambda uD(v)).
$$
Let $$\phi_4(x,y)=P(x)D(y)-D(P(x)y)+xy+\lambda xD(y).$$  It is clear  that the leading monomial  of $\phi_4(u,v)$ is   $P(u)D(v)$ with respect to $\leq_{\operatorname{PD}} $ which cannot be reduced further.
\end{exam}
\begin{exam}\label{phi_5}
For $u, v\in \frakS(Z)$, let
 $$\begin{array}{rcl}   f&=& \phi_2(u,P(v))=D(u)D(P(v)) + \lambda^{-1}D(u)P(v) + \lambda^{-1}uD(P(v)) - \lambda^{-1}D(uP(v)),\\
   g &=& \phi_3(v)=D(P(v))-v,\\
   q&=&D(u)\star, \\
     p&=&D(u)D(P(v))=\bar{f}=\left.q\right|_{\bar{g}}.\end{array}$$
 Then
$$
(f,g)_p^q=f-\left.q\right|_{g}\equiv \lambda^{-1}(D(u)P(v)-D(uP(v))+uv+\lambda D(u)v).
$$
Let  $$\phi_5(x,y)=D(x)P(y)-D(xP(y))+xy+\lambda D(x)y.$$
It is clear  that the leading monomial  of $ \phi_5(u,v) $ is    $D(u)P(v)$ with respect to $\leq_{\operatorname{PD}} $ which cannot  be reduced further.
\end{exam}
Now denote  $\dRB$ to be the set of the following OPIs:
\begin{itemize}
		\item [(1)] $\phi_1(x,y) =P(x)P(y) - P(xP(y)) - P(P(x)y) -  \lambda P(xy)$,

		\item [(2)] $ \phi_2(x,y)=D(x)D(y) + \lambda^{-1}D(x)y + \lambda^{-1}xD(y) - \lambda^{-1}D(xy) $,

		\item [(3)] $\phi_3(x)=D(P(x)) - x$,

		\item [(4)] $\phi_4(x,y)=P(x)D(y)-D(P(x)y)+xy+\lambda xD(y)$,

		\item [(5)] $\phi_5(x,y)=D(x)P(y)-D(xP(y))+xy+\lambda D(x)y$.

 	\end{itemize}
 It is obvious that
$\left\langle S_{\dRB'}(Z)\right\rangle_\mtOpAlg=\left\langle S_{\dRB}(Z)\right\rangle_\mtOpAlg$ for each set $Z$.

Next we will show that  $\dRB$ is $\Omega$-GS with respect to $\leq_{\operatorname{PD}}$. Before that, we need the following lemma to simplify our proof.


\begin{lem}\label{including}
Let $\phi(x_1,\dots,x_n)$ and $\psi(y_1,\dots,y_m)$ be two OPIs.  Let $Z$ be a set. Suppose that, for any $u_1, \dots, u_n, v_1, \dots, v_m\in \frakS(Z)$, the leading monomial of $\phi(u_1,\dots,u_n)$ is $\bar\phi(u_1,\dots,u_n)$ and leading monomial of $\psi(v_1,\dots,v_m)$ is $\bar\psi(v_1,\dots,v_m)$.

Now write $f=\phi(u_1,\dots,u_n)$ and $g=\psi(v_1,\dots,v_m)$ for fixed $u_1, \dots, u_n$, $v_1, \dots, v_m\in \frakS(Z)$.
If there exists $i~(1 \leq i \leq n)$ and $r \in \frakS^{\star}(Z)$ such that
$u_i = \left.r\right|_{\bar{g}}$, then the inclusion composition $(f, g)_p^q = f - q|_g$ with $p=\bar f$ and $q = \bar{\phi}(u_1, \dots, u_{i-1}, r , u_{i+1}, \dots, u_n)$, is trivial modulo $(S_{\{\phi,\psi\}}(Z),w)$.  We call this type of inclusion composition as complete inclusion composition.
\end{lem}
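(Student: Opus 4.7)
The plan is to exhibit an explicit representation $f - q|_g = \sum_i c_i p_i|_{t_i}$ with $t_i \in S_{\{\phi, \psi\}}(\frakS(Z))$ and $p_i|_{\bar{t_i}} < p = \bar f$, which by definition yields that $f - q|_g$ is trivial modulo $(S_{\{\phi, \psi\}}(\frakS(Z)), p)$. The first step is to decompose $g = \bar g + h$ with $h := g - \bar g$ a $\bk$-linear combination of monomials strictly smaller than $\bar g$, so that $r|_g = r|_{\bar g} + r|_h = u_i + r|_h$. Using the multilinearity of $\phi$ and $\bar\phi$ in the variable $x_i$, one computes $q|_g = \bar f + q|_h$ and $f = \phi(u_1, \dots, r|_g, \dots, u_n) - \phi(u_1, \dots, r|_h, \dots, u_n)$; subtracting and writing $\phi = \bar\phi + \tilde\phi$ with $\tilde\phi := \phi - \bar\phi$ the tail of $\phi$, the leading parts cancel and one obtains the key identity
\[
f - q|_g \;=\; \tilde\phi(u_1, \dots, r|_g, \dots, u_n) \;-\; \phi(u_1, \dots, r|_h, \dots, u_n).
\]

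The second step is to rewrite each of the two summands in the required form. Expanding $h = \sum_l \alpha_l h_l$ with $h_l \in \frakS(Z)$ and $h_l < \bar g$, multilinearity gives $\phi(u_1, \dots, r|_h, \dots, u_n) = \sum_l \alpha_l \, \phi(u_1, \dots, r|_{h_l}, \dots, u_n)$, where each summand lies in $S_{\{\phi, \psi\}}(\frakS(Z))$ and has leading monomial $q|_{h_l} < q|_{\bar g} = \bar f$ by the bracket and left/right compatibility of the monomial order on $\frakS(Z)$. For the first summand, writing $\tilde\phi = \sum_{j \neq j_0} \gamma_j T_j$ (the sum being over the non-leading monomials of $\phi$) and setting $Q_j := T_j(u_1, \dots, u_{i-1}, \star, u_{i+1}, \dots, u_n) \in \frakS^\star(Z)$, which is well defined because $T_j$ contains $x_i$ exactly once by multilinearity, one has $\tilde\phi(u_1, \dots, r|_g, \dots, u_n) = \sum_{j \neq j_0} \gamma_j (Q_j|_r)|_g$. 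Each such summand uses the element $g \in S_{\{\phi, \psi\}}(\frakS(Z))$ with context $Q_j|_r \in \frakS^\star(Z)$, whose evaluation at $\bar g$ is $(Q_j|_r)|_{\bar g} = Q_j|_{u_i} = T_j(u_1, \dots, u_n) < \bar f$, the inequality being precisely the hypothesis that $\bar\phi(u_1, \dots, u_n) = \bar f$ is the leading monomial of $\phi(u_1, \dots, u_n)$.

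The main obstacle is recognising the clean identity above: once one sees that $\phi(u_1, \dots, r|_g, \dots, u_n) - \bar\phi(u_1, \dots, r|_g, \dots, u_n)$ equals $\tilde\phi(u_1, \dots, r|_g, \dots, u_n)$, the rest is routine bookkeeping using multilinearity of $\phi$ and the monomial-order axioms. Conceptually, the cancellation reflects the \emph{completeness} of the inclusion composition: because the whole of $\bar g$ sits inside the single argument $u_i$ rather than straddling several $u_j$'s, the reduction passes through the outer evaluation of $\phi$ and leaves only strictly lower-order pieces, each naturally packaged either as a substitution of $g$ into a smaller context $Q_j|_r$, or as an evaluation of $\phi$ at strictly smaller arguments.
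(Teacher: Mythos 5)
Your proposal is correct and follows essentially the same route as the paper: the key identity $f-q|_g=(\phi-\bar\phi)(u_1,\dots,r|_g,\dots,u_n)-\phi(u_1,\dots,r|_{g-\bar g},\dots,u_n)$ is exactly the paper's computation, obtained via the same add-and-subtract use of multilinearity in the $i$-th slot. The only difference is that you spell out explicitly why each resulting term is of the form $c\,q'|_s$ with $q'|_{\bar s}<p$, a verification the paper leaves implicit.
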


\begin{proof}
The assertion follows from
$$
\begin{aligned}
	(f,g)_{p}^q
	&=f-q|_{g}\\
	&=(\phi-\bar\phi)(u_1, \dots, u_{i-1}, r|_{\bar g}, u_{i+1}, \dots,u_n)-\bar\phi(u_1,\dots, u_{i-1}, r|_{g-\bar g}, u_{i+1}, \dots,u_n)  \\
	&=(\phi-\bar\phi)(u_1, \dots, u_{i-1}, r|_{g}, u_{i+1}, \dots,u_n)-\phi(u_1,\dots, u_{i-1}, r|_{g-\bar g}, u_{i+1}, \dots,u_n).
\end{aligned}
$$
\end{proof}
%
\begin{remark}
  Lemma~\ref{including}  extends \cite[Theorem~4.1(b)]{GaoGuo17}  to the case of multiple operators.
\end{remark}

Now we can prove  $\dRB$ is  $\Omega$-GS.

\begin{thm}\label{S2}
 $\dRB$   is   $\Omega$-GS in $\bk\frakS(Z)$ with respect to $\leq_{\operatorname{PD}} $.
\end{thm}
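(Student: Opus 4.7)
The plan is to verify that every intersection composition and every inclusion composition of pairs $f,g\in S_\dRB(\frakS(Z))$ is trivial modulo $(S_\dRB(\frakS(Z)),p)$. A first simplification is Lemma~\ref{including}, which kills every complete inclusion composition, so it remains to handle intersection compositions together with non-complete inclusion compositions. Since $\bar\phi_3=D(P(x))$ has breadth one, $\phi_3$ contributes to no intersection composition, so only $\phi_1,\phi_2,\phi_4,\phi_5$ enter there.

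For the intersection compositions I would enumerate ambiguities by matching the last factor of $\bar\phi_i$ against the first factor of $\bar\phi_j$. Among the four relevant leading words $P(\cdot)P(\cdot)$, $D(\cdot)D(\cdot)$, $P(\cdot)D(\cdot)$, $D(\cdot)P(\cdot)$ this produces exactly eight ambiguities
$$P(u)P(v)P(w),\ P(u)P(v)D(w),\ D(u)D(v)D(w),\ D(u)D(v)P(w),$$
$$P(u)D(v)D(w),\ P(u)D(v)P(w),\ D(u)P(v)P(w),\ D(u)P(v)D(w).$$
The first and third are the classical Rota-Baxter and differential compositions and are known to be trivial (see \cite{BokutChenQiu}). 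For each of the remaining six I would expand $fu_{\mathrm{ext}}-v_{\mathrm{ext}}g$ explicitly and rewrite the surviving factors $P(\cdot)P(\cdot)$, $D(\cdot)D(\cdot)$, $P(\cdot)D(\cdot)$, $D(\cdot)P(\cdot)$ and $D(P(\cdot))$ using $\phi_1,\phi_2,\phi_4,\phi_5,\phi_3$ respectively. For instance, for the overlap $P(u)P(v)D(w)$ between $\phi_1(u,v)$ and $\phi_4(v,w)$ the expansion contains four factors of the form $P(\cdot)D(\cdot)$; after rewriting them by $\phi_4$, the resulting $D(P(u)P(v)w)$ reduces under $\phi_1$ applied inside the outer $D$ so that all but one $D$-term cancels, and the single remaining $uD(P(v)w)$ is dispatched by $\phi_2$ followed by $\phi_3$, yielding zero. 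The five other mixed ambiguities are analogous.

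Next I would address the non-complete inclusion compositions. The only way the sub-leading-monomial can straddle the outer structure of another leading monomial is with $\bar\phi_3=D(P(x))$ sitting at an outer $D$-position, which happens exactly when a $D$-slot of $\phi_2$, $\phi_4$ or $\phi_5$ is specialised at $P(w)$. This yields four cases: $\phi_2(P(w),v)$ and $\phi_2(u,P(w))$ paired with $\phi_3(w)$, already treated in Examples~\ref{phi_4} and \ref{phi_5}, whose residues are precisely what motivated adjoining $\phi_4$ and $\phi_5$; together with $\phi_4(u,P(w))$ and $\phi_5(P(w),v)$ paired with $\phi_3(w)$. For the first new case, with $q=P(u)\star$, a direct expansion gives
$$(f,g)_p^q = -D(P(u)P(w)) + uP(w) + \lambda uD(P(w)) + P(u)w;$$
rewriting $P(u)P(w)$ by $\phi_1$ and then collapsing every resulting $D(P(\cdot))$ and the remaining $\lambda uD(P(w))$ by $\phi_3$ produces $-(uP(w)+P(u)w+\lambda uw)+uP(w)+P(u)w+\lambda uw=0$. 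The $\phi_5(P(w),v)$ case is symmetric.

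The main technical burden lies in the six mixed intersection compositions, each of which requires a careful cascade of rewritings mixing $\phi_1,\phi_2,\phi_3$ with $\phi_4,\phi_5$; that every intermediate monomial lies below $p$ under $\leq_{\operatorname{PD}}$ is automatic from the monomial-order property of $\leq_{\operatorname{PD}}$ together with the fact proved above that the leading monomial of each $\phi_i(u_1,\dots,u_n)$ is $\bar\phi_i(u_1,\dots,u_n)$, so that each rewriting strictly decreases the leading monomial.
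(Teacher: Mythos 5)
Your proposal is correct and follows essentially the same route as the paper: complete inclusion compositions are disposed of by Lemma~\ref{including}, and the remaining ambiguities are exactly the eight one-factor overlaps among $\bar\phi_1,\bar\phi_2,\bar\phi_4,\bar\phi_5$ together with the four inclusions where a $D$-slot is specialised at $P(\cdot)$, each settled by direct reduction (the paper writes out the case $5\wedge 1$ where you write out $1\wedge 4$ and $4\wedge 3$). One cosmetic caveat: in the $P(u)P(v)D(w)$ case the leftover should be absorbed as $\lambda\, u\,\phi_4(v,w)$, whose relevant monomial $uP(v)D(w)$ satisfies $uP(v)D(w)<_{\operatorname{PD}}p$, rather than literally rewriting $uD(P(v)w)$ via $\phi_2(P(v),w)$ and $\phi_3$, since that intermediate step involves $uD(P(v))D(w)>_{\operatorname{PD}}p$ (one more $D$); the fix is immediate and does not affect the argument.
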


\begin{proof}
We write  $i \wedge j$ the composition of OPIs of $\phi_i$ and $\phi_j$, which means $\phi_i$ lies on the left and $\phi_j$ lies on the right for intersection composition or $\phi_j$ is included in $\phi_i$ for inclusion composition.  The ambiguities of all  possible compositions in $\dRB$  are listed  as below: for arbitrary  $u, v, w \in \frakS(Z)$ and $q \in \frakS^{\star}(Z)$,
\begin{itemize}
	\item [$1 \wedge 1$] \quad $\underline{P(u)P(v)P(w)}$, \quad $P\left(\left.q\right|_{P(u)P(v)} \right)P\left(w\right)$, \quad  $P\left(u\right) P\left(\left.q\right|_{P(v)P(w)}\right)$

	\item [$1 \wedge 2$] \quad $P\left(\left.q\right|_{D(u)D(v)} \right)P\left(w\right)$, \quad $P\left(u\right) P\left(\left.q\right|_{D(u)D(v)}\right)$,

	\item [$1 \wedge 3$] \quad $P\left(\left.q\right|_{D(P(u))} \right)P\left(v\right)$, \quad $P\left(u\right) P\left(\left.q\right|_{D(P(v))}\right)$,

		\item [$1 \wedge 4$] \quad $\underline{P(u)P(v)D(w)}$,\quad $P\left(\left.q\right|_{P(u)D(v)} \right)P\left(w\right)$, \quad $P\left(u\right) P\left(\left.q\right|_{P(v)D(w)}\right)$,

		\item [$1 \wedge 5$] \quad $P\left(\left.q\right|_{D(u)P(v)} \right)P\left(w\right)$, \quad $P\left(u\right) P\left(\left.q\right|_{D(v)P(w)}\right)$,

\item [$2 \wedge 1$] \quad $D\left(\left.q\right|_{P(u)(P(v)}\right)D\left(w\right)$,\quad $D\left(u\right)D\left(\left.q\right|_{P(v)P(w)}\right)$,
	
	\item [$2 \wedge 2$] \quad$\underline{D(u)D(v)D(w)}$ , \quad $D\left(\left.q\right|_{D(u)D(v)}\right)D\left(w\right)$,\quad $D\left(u\right)D\left(\left.q\right|_{D(v)D(w)}\right)$,

		\item [$2 \wedge 3$] \quad $\underline{D(P(u))D(v)}$, \quad $\underline{D(u)D(P(v))}$,\quad $D\left(\left.q\right|_{D(P(u))}\right)D\left(v\right)$,\quad $D\left(u\right)D\left(\left.q\right|_{D(P(v))}\right)$,

		\item [$2 \wedge 4$] \quad  $D\left(\left.q\right|_{P(u)D(v)}\right)D\left(w\right)$,\quad $D\left(u\right)D\left(\left.q\right|_{P(v)D(w)}\right)$,

		\item [$2 \wedge 5$] \quad $\underline{D(u)D(v)P(w)}$,\quad  $D\left(\left.q\right|_{D(u)P(v)}\right)D\left(w\right)$,\quad $D\left(u\right)D\left(\left.q\right|_{D(v)P(w)}\right)$,

		\item [$3 \wedge 1$] \quad $D\left(P\left(\left.q\right|_{P(u)P(v)}\right)\right)$,

		\item [$3 \wedge 2$] \quad $D\left(P\left(\left.q\right|_{D(u)D(v)}\right)\right)$,

		\item [$3 \wedge 3$] \quad  $D\left(P\left(\left.q\right|_{D(P(u))}\right)\right)$,

		\item [$3 \wedge 4$] \quad $D\left(P\left(\left.q\right|_{P(u)D(v)}\right)\right)$,

		\item [$3 \wedge 5$] \quad $D\left(P\left(\left.q\right|_{D(u)P(v)}\right)\right)$,

		\item [$4 \wedge 1$] \quad $P\left(\left.q\right|_{P(u)P(v)}\right)D\left(w\right)$,\quad $P\left(u\right)D\left(\left.q\right|_{P(v)P(w)}\right)$,

\item [$4 \wedge 2$] \quad $\underline{P(u)D(v)D(w)}$, \quad $P\left(\left.q\right|_{D(u)D(v)}\right)D\left(w\right)$,\quad $P\left(u\right)D\left(\left.q\right|_{D(v)D(w)}\right)$,

    \item [$4 \wedge 3$] \quad $\underline{P(u)D(P(v))}$, \quad $P\left(\left.q\right|_{D(P(u))}\right)D\left(v\right)$,\quad $P\left(u\right)D\left(\left.q\right|_{D(P(v))}\right)$,

        \item [$4 \wedge 4$] \quad $P\left(\left.q\right|_{P(u)D(v)}\right)D\left(w\right)$,\quad $P\left(u\right)D\left(\left.q\right|_{P(v)D(w)}\right)$,

	\item [$4 \wedge 5$] \quad $\underline{P(u)D(v)P(w)}$, \quad $P\left(\left.q\right|_{D(u)P(v)}\right)D\left(w\right)$,\quad $P\left(u\right)D\left(\left.q\right|_{D(v)P(w)}\right)$,

		\item [$5 \wedge 1$] \quad $\underline{D(u)P(v)P(w)}$, \quad $D\left(\left.q\right|_{P(u)P(v)}\right)P\left(w\right)$,\quad $D\left(u\right)P\left(\left.q\right|_{P(v)P(w)}\right)$,

		\item [$5 \wedge 2$] \quad $D\left(\left.q\right|_{D(u)D(v)}\right)P\left(w\right)$,\quad $D\left(u\right)P\left(\left.q\right|_{D(v)D(w)}\right)$,

		\item [$5 \wedge 3$] \quad $\underline{D(P(u))P(v)}$,\quad $D\left(\left.q\right|_{D(P(u))}\right)P\left(v\right)$,\quad $D\left(u\right)P\left(\left.q\right|_{D(P(v))}\right)$,

\item [$5 \wedge 4$] \quad $\underline{D(u)P(v)D(w)}$,\quad $D\left(\left.q\right|_{P(u)D(v)}\right)P\left(w\right)$,\quad $D\left(u\right)P\left(\left.q\right|_{P(v)D(w)}\right)$,

\item [$5 \wedge 5$] \quad $D\left(\left.q\right|_{D(u)P(v)}\right)P\left(w\right)$,\quad $D\left(u\right)P\left(\left.q\right|_{D(v)P(w)}\right)$.
 \end{itemize}

Notice that all compositions above but the    underlined ones  can be dealt with by Lemma~\ref{including}.
There remains to consider the  underlined compositions. We only give the complete proof for the case $5 \wedge 1$, the other cases being similar.
For the case $5 \wedge 1$,  write
 $f = \phi_5(u,v)$, $g= \phi_1(v,w)$ and $p= D(u)P(v)P(w)$ . So we have
$$
\begin{aligned}
(f,g)_p^{P(w),D(u)} &=-D(uP(v))P(w)+uvP(w)+  \lambda D(u)vP(w)\\
& \quad  +D(u)P(vP(w))+D(u)P(P(v)w)+\lambda D(u)P(vw) \\
& \equiv -D(uP(v)P(w))+uP(v)w +\lambda D(uP(v))w+uvP(w)+  \lambda D(u)vP(w) \\
& \quad +D(uP(P(v)w))-uP(v)w- \lambda D(u)P(v)w\\
& \quad  +D(uP(vP(w)))-uvP(w)-\lambda D(u)vP(w)\\
& \quad  +\lambda D(uP(vw))-\lambda uvw-\lambda^2 D(u)vw\\
& \equiv -D(uP(v)P(w)) +D(uP(vP(w)))+D(uP(P(v)w))+\lambda D(uP(vw))\\
& \quad - \lambda D(u)P(v)w+\lambda D(uP(v))w   -\lambda uvw-\lambda^2 D(u)vw \\
& = -D(u\phi_1(v,w))-\lambda \phi_5(u,v)w\\
& \equiv 0 \bmod \left(S_{\dRB}(Z), p\right).
\end{aligned}
$$
We are done.
\end{proof}

 \begin{thm}
 	Let $Z$ be a set, $A=\bk \cals(Z)\slash I_A$ a nonunital $\bk$-algebra.  Then we have:
	$$\calf^{\dRB\zhx\Alg}_{\Alg}(A)=\bk\frakS(Z)\slash\left\langle S_{\dRB}(Z)\cup I_A\right\rangle_\mtOpAlg.$$
	Moreover, assume $I_A$  has a GS  basis $G$ with respect to the degree-lexicographical order $\leq_{\rm {dlex}}$. Then $S_{\dRB}(Z)\cup G$ is an operated  GS  basis of $\left\langle S_{\dRB}(Z)\cup I_A\right\rangle_\mtOpAlg$ in $\bk\frakS(Z) $  with respect to $\leq_{\operatorname{PD}}$.

 \end{thm}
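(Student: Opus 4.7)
The overall plan is to invoke Theorem~\ref{Thm: GS basis for free nonunital Phi algebra over nonunital alg} with $\Phi = \dRB$. The identification $\calf^{\dRB\zhx\Alg}_{\Alg}(A) = \bk\frakS(Z)/\langle S_{\dRB}(\frakS(Z))\cup I_A\rangle_{\mtOpAlg}$ is immediate from Proposition~\ref{free phi algebra}(a), so the substantive part of the argument is the $\Omega$-GS basis claim.

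To apply Theorem~\ref{Thm: GS basis for free nonunital Phi algebra over nonunital alg}, I would verify its four hypotheses in turn. First, $\dRB$ is $\Omega$-GS on $\bk\frakS(Z)$ with respect to $\leq_{\operatorname{PD}}$ by Theorem~\ref{S2}. Second, the restriction of $\leq_{\operatorname{PD}}$ to $\cals(Z)=\frakS_{,0}(Z)$ coincides with $\leq_{\mathrm{dlex}}$: every element of $\cals(Z)$ has $\deg_D=\deg_P=\deg_{GD}=\deg_{GP}=0$, so the $\leq_{\operatorname{PD}}$-comparison reduces to $\deg_Z$ followed by the Dlex tiebreaker, which on $\frakS_{,0}(Z)$ is $\leq_{\operatorname{Dlex}_0}=\leq_{\mathrm{dlex}}$; hence the hypothesis on $G$ transfers verbatim. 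Third, the five leading monomials
$$P(x)P(y),\ D(x)D(y),\ D(P(x)),\ P(x)D(y),\ D(x)P(y)$$
contain no subword in $\cals(X)\setminus X$, since the only $Z$-occurrences appearing in them are single variables. Fourth, one must check that for every substitution $u_1,\ldots,u_n\in\frakS(Z)$ the leading monomial of $\phi_i(u_1,\ldots,u_n)$ equals $\overline{\phi_i}(u_1,\ldots,u_n)$; for $\phi_1,\phi_2,\phi_3$ this is precisely the unlabelled proposition preceding Example~\ref{phi_4}.

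I expect the fourth condition, specifically for the new OPIs $\phi_4$ and $\phi_5$, to be the main obstacle. The subtlety is that the term $D(P(u)v)$ in $\phi_4(u,v)$ (and symmetrically $D(uP(v))$ in $\phi_5(u,v)$) agrees with the candidate leading monomial $P(u)D(v)$ (resp.\ $D(u)P(v)$) on all five numeric invariants $\deg_D,\deg_P,\deg_Z,\deg_{GD},\deg_{GP}$, so the comparison must be decided by the Dlex tiebreaker. The key observation is that at the relevant level $\frakS_{,n}(Z)$, the candidate $P(u)D(v)=\lfloor u\rfloor_P\lfloor v\rfloor_D$ is a length-two word over $Z\sqcup\lfloor\frakS_{,n-1}(Z)\rfloor_\Omega$ whereas $D(P(u)v)=\lfloor\lfloor u\rfloor_P v\rfloor_D$ is a single such letter, so $P(u)D(v)$ wins on top-level $\deg_Y$ in $\leq_{\operatorname{Dlex}_n}$. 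The remaining terms $uv$ and $uD(v)$ (resp.\ $uv$ and $D(u)v$) are eliminated earlier, by strict drops in $\deg_D$ and in $\deg_P$ respectively. Once this case is dispatched, all four hypotheses of Theorem~\ref{Thm: GS basis for free nonunital Phi algebra over nonunital alg} hold, and the conclusion follows.
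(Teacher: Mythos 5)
Your proposal follows essentially the same route as the paper: identify $\calf^{\dRB\zhx\Alg}_{\Alg}(A)$ via Proposition~\ref{free phi algebra}(a) and then apply Theorem~\ref{S2} together with Theorem~\ref{Thm: GS basis for free nonunital Phi algebra over nonunital alg}, and the extra checks you spell out (the restriction of $\leq_{\operatorname{PD}}$ to $\cals(Z)$ being $\leq_{\rm dlex}$, no subwords in $\cals(X)\setminus X$, stability of leading monomials under substitution) are exactly the hypotheses the paper invokes, the last one being asserted for $\phi_4,\phi_5$ in Examples~\ref{phi_4} and \ref{phi_5}. One small slip, which does not affect the conclusion: for $\phi_5$ the monomials $D(u)P(v)$ and $D(uP(v))$ do \emph{not} agree on all five invariants, since the outer $\rfloor_D$ in $D(u)P(v)$ has GD-degree $\deg_Z(v)>0$ while in $D(uP(v))$ it has GD-degree $0$, so that comparison is already settled (in the direction you want) at the $\leq_{\mathrm{GD}}$ stage, and your Dlex length argument is only needed for $\phi_4$.
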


 \begin{proof}
 Since the leading monomial in  $\dRB$ has no subword in $\cals(X)\backslash X$, the  result follows immediately from   Theorem~\ref{S2} and Theorem~\ref{Thm: GS basis for free nonunital Phi algebra over nonunital alg}.
 \end{proof}

As a consequence, we obtain a linear basis.
\begin{thm}
	Let $Z$ be a set, $A=\bk \cals(Z)\slash I_A$ a nonunital $\bk$-algebra with a GS  basis $G$ with respect to $\leq_{\rm{dlex}}$.   Then  the set
 $\Irr(S_{\dRB}(Z)\cup G)$ which is by definition the complement of
 $$\left\lbrace q|_{\bar{s}},q|_{P(u)P(v)},q|_{D(u)D(v)},q|_{D(P(u))},q|_{P(u)D(v)}, q|_{D(u)P(v)}, s\in G,q\in\frakSstar(Z),u,v\in\frakS(Z)\right\rbrace$$ in $\frakS(Z)$   is a linear basis of the free nonunital $\lambda$-differential Rota-Baxter algebra   $\calf^{\dRB\zhx\Alg}_{\Alg}(A)$ over $A$.

\end{thm}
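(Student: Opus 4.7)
The plan is to obtain the linear basis as a direct consequence of the preceding theorem (which identifies $S_{\dRB}(Z)\cup G$ as an $\Omega$-GS basis) together with the nonunital Composition--Diamond Lemma (Theorem~\ref{Thm: nonunital CD}). The argument is essentially a bookkeeping reduction once the hard work of Theorem~\ref{S2} and the previous theorem has been carried out.

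First, I invoke Proposition~\ref{free phi algebra}(a) to identify
\[
\calf^{\dRB\zhx\Alg}_{\Alg}(A) \;=\; \bk\frakS(Z)\big/\big\langle S_{\dRB}(Z)\cup I_A\big\rangle_{\mtOpAlg},
\]
so that a linear basis of the quotient is what is needed. Next, by the preceding theorem, $S_{\dRB}(Z)\cup G$ is an $\Omega$-GS basis of the $\Omega$-ideal $\langle S_{\dRB}(Z)\cup I_A\rangle_{\mtOpAlg}$ with respect to the monomial order $\leq_{\operatorname{PD}}$. Then Theorem~\ref{Thm: nonunital CD} applied to $\calg=S_{\dRB}(Z)\cup G$ yields, as a $\bk$-space decomposition,
\[
\bk\frakS(Z) \;=\; \bk\,\Irr(S_{\dRB}(Z)\cup G)\,\oplus\,\big\langle S_{\dRB}(Z)\cup G\big\rangle_{\mtOpAlg},
\]
so that $\Irr(S_{\dRB}(Z)\cup G)$ is a $\bk$-basis of $\calf^{\dRB\zhx\Alg}_{\Alg}(A)$.

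It remains to check that $\Irr(S_{\dRB}(Z)\cup G)$ coincides with the explicit complement stated in the theorem. For this I identify the leading monomials of the elements of $S_{\dRB}(Z)$ with respect to $\leq_{\operatorname{PD}}$: for any $u,v\in\frakS(Z)$, these are $P(u)P(v)$, $D(u)D(v)$, $D(P(u))$, $P(u)D(v)$, and $D(u)P(v)$, as computed earlier (for $\phi_1,\phi_2,\phi_3$) and as read off from Examples~\ref{phi_4} and \ref{phi_5} (for $\phi_4,\phi_5$). Therefore the set
\[
\big\{\,q|_{\bar s},\ q|_{P(u)P(v)},\ q|_{D(u)D(v)},\ q|_{D(P(u))},\ q|_{P(u)D(v)},\ q|_{D(u)P(v)} \,:\, s\in G,\ q\in\frakSstar(Z),\ u,v\in\frakS(Z)\big\}
\]
is exactly $\{\,q|_{\bar t} : t\in S_{\dRB}(Z)\cup G,\ q\in\frakSstar(Z)\,\}$, so its complement in $\frakS(Z)$ is $\Irr(S_{\dRB}(Z)\cup G)$ by definition.

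There is no serious obstacle beyond this identification; the entire substance of the statement has been absorbed into the previous theorem and into Theorem~\ref{Thm: nonunital CD}. The only potential pitfall is to verify carefully that the listed leading monomials really are leading monomials in \emph{all} substitutions $u,v\in\frakS(Z)$ (which is exactly the hypothesis of Theorem~\ref{Thm: GS basis for free nonunital Phi algebra over nonunital alg} and is precisely what our monomial order $\leq_{\operatorname{PD}}$ was designed to ensure).
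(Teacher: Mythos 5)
Your proposal is correct and follows essentially the same route as the paper: the paper's proof is simply that the statement is a direct consequence of Theorem~\ref{Thm: nonunital CD} applied to the $\Omega$-GS basis $S_{\dRB}(Z)\cup G$ obtained in the preceding theorem, which is exactly what you do (with the identification $\langle S_{\dRB}(Z)\cup G\rangle_{\mtOpAlg}=\langle S_{\dRB}(Z)\cup I_A\rangle_{\mtOpAlg}$ and the list of leading monomials spelled out more explicitly than in the paper).
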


\begin{proof}
It can be induced directly by   Theorem~\ref{Thm: nonunital CD}.
\end{proof}

\begin{remark}  Since the monomial order  used in \cite{BokutChenQiu} does not satisfy the conditions of  Theorem~\ref{Thm: GS basis for free nonunital Phi algebra over nonunital alg}, we have to make use of a new monomial order while treating free  differential Rota-Baxter algebras over an algebra.  In fact, since the  leading monomials are different,   even for free differential Rota-Baxter algebras over a field, our monomial order will provide new operated GS basis and linear basis.

\end{remark}

\subsection{ Case of nonunital algebras with $\lambda=0$}\label{Subsection: case zero}\

Now we consider
nonunital free differential Rota-Baxter algebras on algebras with $\lambda=0$.
This case can be studied similarly to the case $\lambda\neq 0$, so we omit the details in this subsection.



 Denote $\phi_1(x,y)$   with $\lambda=0$ by $\phi_1^0(x,y)$ . Let
$$\phi_2^0(x,y)=D(x)y+xD(y)-D(xy).$$
We also write $\phi_3^0(x)=\phi_3(x)$ for convenience.

\begin{prop}For any $u,v \in \frakS(Z)$, the leading monomials of  $\phi_1^0(u, v)$,  $\phi_2^0(u, v)$ and $\phi_3^0(u)$ with respect to  $\leq_{\operatorname{PD}}$ are
  $ P(u)P(v),  D(u)v$ and $D(P(u))$ respectively.

\end{prop}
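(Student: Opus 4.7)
The plan is to unwind the lexicographic definition of $\leq_{\operatorname{PD}}$ and, for each of the three OPIs, identify the first of the preorders $\leq_{\mathrm{D}}, \leq_{\mathrm{P}}, \leq_{\mathrm{dZ}}, \leq_{\mathrm{GD}}, \leq_{\mathrm{GP}}, \leq_{\operatorname{Dlex}}$ that separates the candidate leading monomial from the remaining terms. Fix $u,v\in\frakS(Z)$ and set $a=\deg_Z(u)$, $b=\deg_Z(v)$; recall that every element of $\frakS(Z)$ contains at least one symbol of $Z$, so $a,b\geq 1$.

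First, for $\phi_3^0(u)=D(P(u))-u$, the term $D(P(u))$ has $\deg_D$ equal to $\deg_D(u)+1$ while the other term has $\deg_D$ equal to $\deg_D(u)$. Hence $D(P(u))>_{\mathrm{D}} u$, so $D(P(u))$ is the leading monomial with respect to $\leq_{\operatorname{PD}}$.

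Next, for $\phi_2^0(u,v)=D(u)v+uD(v)-D(uv)$, a direct check shows that all three monomials share the same values of $\deg_D$, $\deg_P$ and $\deg_Z$. It therefore remains to compare their $\deg_{\mathrm{GD}}$. Using the definition of GD-degree, the only outer right bracket $\rfloor_D$ in $D(u)v$ lies in the gap between the $a$-th and $(a{+}1)$-th element of $Z$ (counted in the whole word), contributing $(a+b)-a=b$, whereas the corresponding outer bracket in $uD(v)$ and in $D(uv)$ lies to the right of all symbols of $Z$ and contributes $0$; the inner GD-contributions coming from $u$ and $v$ are identical in all three terms. Since $b\geq 1$, this shows $D(u)v>_{\mathrm{GD}}uD(v)$ and $D(u)v>_{\mathrm{GD}}D(uv)$, and hence $D(u)v$ is the leading monomial of $\phi_2^0(u,v)$.

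Finally, for $\phi_1^0(u,v)=P(u)P(v)-P(uP(v))-P(P(u)v)$, the three monomials agree on $\deg_D$, $\deg_P$, $\deg_Z$, and $\deg_{\mathrm{GD}}$ (in each case the only difference between the terms is whether a given pair of $P$-brackets is nested or juxtaposed). The separation therefore occurs at the $\deg_{\mathrm{GP}}$ level. The contributions to $\deg_{\mathrm{GP}}$ from the brackets internal to $u$ or $v$ are identical in all three terms, so one only has to account for the two distinguished $P$-brackets. In $P(u)P(v)$ these brackets contribute $((a+b)-a)+((a+b)-b)=a+b$; in $P(uP(v))$ the outer bracket contains all $a+b$ symbols and contributes $0$, while the inner $P(v)$ contributes $(a+b)-b=a$, for a total of $a$; similarly $P(P(u)v)$ contributes $b$. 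Since $a,b\geq 1$, we obtain $\deg_{\mathrm{GP}}(P(u)P(v))>\deg_{\mathrm{GP}}(P(uP(v)))$ and $\deg_{\mathrm{GP}}(P(u)P(v))>\deg_{\mathrm{GP}}(P(P(u)v))$, so $P(u)P(v)$ is the leading monomial. The main (and essentially only) point requiring care is the bookkeeping for $\deg_{\mathrm{GP}}$ in this last case, which is exactly what forces the use of the refinements $\leq_{\mathrm{GD}}$ and $\leq_{\mathrm{GP}}$ in the definition of $\leq_{\operatorname{PD}}$.
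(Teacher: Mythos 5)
Your proof is correct and follows essentially the same approach as the paper: the paper omits the $\lambda=0$ details, but its proof of the analogous $\lambda\neq 0$ proposition is exactly this kind of degree bookkeeping (equality of $\deg_D,\deg_P,\deg_Z,\deg_{GD}$ and then a strict $\deg_{GP}$ comparison by $a$ and $b$ for the Rota--Baxter identity). Your only genuinely new bookkeeping is the $\deg_{GD}$ comparison showing $D(u)v$ beats $uD(v)$ and $D(uv)$ for $\phi_2^0$ (replacing the $\deg_D$ argument available when $\lambda\neq 0$), and that computation is accurate, including the observation that the inner bracket contributions coincide across the three terms.
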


Let $$\OdRB' :=\left\lbrace ~ \phi_1^0(x,y) , \phi_2^0(x,y) , \phi_3^0(x) \right\rbrace. $$
By the following example, one can see that  $\OdRB'$  is not  $\Omega$-GS in $\bk\frakS(Z)$ with respect to $\leq_{\operatorname{PD}}$.

\begin{exam}
For $u, v\in \frakS(Z)$, let
 $$\begin{array}{rcl}  f&=&\phi_2^0(P(u),v)=D(P(u))v+P(u)D(v)-D(P(u)v),\\\
   g &=& \phi_3^0(u)=D(P(u))-u,\\
   q&=&\star v,\\
   p&=&D(P(u))v=\bar{f}=\left.q\right|_{\bar{g}}. \end{array}$$
Then $$
(f,g)_p^q=f-\left.q\right|_{g}\equiv P(u)D(v)-D(P(u)v)+uv.
$$
Let $$\phi_4^0(x,y)=P(x)D(y)-D(P(x)y)+xy.$$  It is clear  that the leading monomial  of $\phi_4^0(u,v)$ with $u,v \in \frakS(Z)$ is   $P(u)D(v)$ with respect to $\leq_{\operatorname{PD}} $ which cannot  be reduced further.
\end{exam}

Now denote  $\OdRB$ to be the set of the following OPIs:
 	\begin{itemize}
		\item [(1)] $\phi_1^0(x,y) =P(x)P(y) - P(xP(y)) - P(P(x)y)$,

		\item [(2)] $ \phi_2^0(x,y)=D(x)y + xD(y) - D(xy) $,

		\item [(3)] $\phi_3^0(x)=D(P(x)) -x$,

		\item [(4)] $\phi_4^0(x,y)=P(x)D(y)-D(P(x)y)+xy$.
 	\end{itemize}
 It is obvious that
$\left\langle S_{\OdRB'}(Z)\right\rangle_\mtOpAlg=\left\langle S_{\OdRB}(Z)\right\rangle_\mtOpAlg$ for arbitrary set $Z$.

Similar to the case $\lambda\neq0$, it can be
proved that $\OdRB$ is $\Omega$-GS with respect to $\leq_{\operatorname{PD}}$.

\begin{remark}
Note that  $\phi_4^0(x,y)$ is just $\phi_4(x,y)$ with $\lambda=0$, and for $u,v \in \frakS(Z)$
$$
\phi_2^0(u,P(v)) =D(u)P(v) + uD(P(v)) - D(uP(v)) \equiv D(u)P(v) + uv - D(uP(v)),
$$
which is exactly  $\phi_5(u,v)$ with $\lambda=0$. So $\phi_5(x,y)$  ($\lambda=0$) does not appear in $\OdRB$.
\end{remark}

\begin{thm}\label{S1}
$\OdRB$ is  $\Omega$-GS  in $\bk\frakS(Z)$ with respect to  $\leq_{\operatorname{PD}} $.
\end{thm}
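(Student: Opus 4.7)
The plan is to mirror the proof of Theorem~\ref{S2}, adapting the bookkeeping to the smaller system $\OdRB=\{\phi_1^0,\phi_2^0,\phi_3^0,\phi_4^0\}$ and the fact that $\phi_2^0$ and $\phi_4^0$ are lower order (no $DD$ term, no $\lambda$ correction). The first step is to enumerate all ambiguities $i\wedge j$ for $i,j\in\{1,2,3,4\}$ arising from intersection or inclusion compositions of the leading monomials $P(u)P(v)$, $D(u)v$, $D(P(u))$, $P(u)D(v)$ of these five OPIs, paying attention that now $\overline{\phi_2^0(u,v)}=D(u)v$ (rather than $D(u)D(v)$), so compositions coming from $\phi_2^0$ look different from those in the $\lambda\neq 0$ case and must be re-listed.

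Once the ambiguity list is in hand, the bulk of the cases will be \emph{complete inclusion compositions}: those where the small OPI $\phi_j^0$ is substituted into a variable slot of $\phi_i^0$, i.e.\ $p=\bar\phi_i^0(u_1,\dots,u_{k-1},q|_{\bar\phi_j^0},u_{k+1},\dots)$. Each such composition is trivial modulo $(S_{\OdRB}(Z),p)$ by Lemma~\ref{including}, provided we have first checked that each $\phi_i^0(u_1,\dots,u_n)$ has leading monomial exactly $\bar\phi_i^0(u_1,\dots,u_n)$ under $\leq_{\operatorname{PD}}$; this was recorded in the proposition immediately preceding the statement. So these cases are handled uniformly.

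What remains are the genuine intersection compositions and those inclusion compositions that are not of the ``complete'' form, i.e.\ the analogues of the underlined items in the proof of Theorem~\ref{S2}. The expected short list is the ambiguities $\underline{P(u)P(v)P(w)}$ ($1\wedge 1$), $\underline{P(u)P(v)D(w)}$ ($1\wedge 4$), $\underline{D(P(u))}$-based overlaps $\underline{D(P(u))v}$ and $\underline{D(P(u))P(v)}$ (coming from $2\wedge 3$ and $4\wedge 3$, noting that $\phi_2^0$ now has leading monomial $D(u)v$, so $D(P(u))v$ is an overlap of $\phi_2^0$ with $\phi_3^0$), plus $\underline{P(u)D(P(v))}$ ($4\wedge 3$) and $\underline{P(u)D(v)D(w)}$ and $\underline{P(u)D(v)w}$ arising from $4\wedge 2$. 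For each such $p$ one writes $f,g\in\OdRB$, forms $(f,g)_p^{\ldots}$, and rewrites it step by step using $\phi_1^0,\phi_2^0,\phi_3^0,\phi_4^0$, checking that at every step the lower-degree terms introduced are strictly less than $p$ under $\leq_{\operatorname{PD}}$ (which is where the fact that $\leq_{\operatorname{PD}}$ is a monomial order enters); the calculation should collapse into a combination of the form $\sum q_i|_{\phi_{k_i}^0(\cdots)}$ with $q_i|_{\bar\phi_{k_i}^0}<p$, exactly as in the $5\wedge 1$ calculation displayed in Theorem~\ref{S2}.

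The main obstacle I expect is not conceptual but combinatorial: ensuring the list of non-complete ambiguities is exhaustive and that each reduction terminates correctly, in particular checking via the explicit descriptions of $\leq_{\operatorname{PD}}$ (Definitions~\ref{GD}, \ref{GP} and Proposition in Section~\ref{section: order}) that each intermediate monomial is strictly $<_{\operatorname{PD}} p$. Since there is no $\phi_5^0$ (by the remark preceding the theorem, $\phi_2^0$ applied with $v$ replaced by $P(v)$ already reproduces the zero-weight analogue of $\phi_5$), the case analysis is strictly smaller than for $\dRB$, so once the framework of Theorem~\ref{S2} is set up the verification should go through essentially verbatim.
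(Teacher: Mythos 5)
Your overall strategy is the same as the paper's: reuse the framework of Theorem~\ref{S2}, observe that the ambiguities $i\wedge j$ with $i,j\in\{1,3,4\}$ are literally unchanged (since $\phi_1^0,\phi_3^0,\phi_4^0$ have the same leading monomials as in the $\lambda\neq 0$ case), dispose of all complete inclusion compositions by Lemma~\ref{including}, and check the remaining overlaps by explicit reduction under $\leq_{\operatorname{PD}}$. In that sense the proposal is on the right track (modulo the slip that $\OdRB$ has four OPIs, not five).

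However, your tentative list of the remaining ``non-complete'' ambiguities is both partly redundant and, more importantly, missing exactly the cases where the paper's proof genuinely differs from Theorem~\ref{S2}. Because $\overline{\phi_2^0(u,v)}=D(u)v$ ends in a bare argument $v$, the OPI $\phi_2^0$ overlaps with itself and with $\phi_4^0$ through an arbitrary intermediate word $s$: the ambiguities $2\wedge 2$, namely $D(u)sD(v)w$ with $f=\phi_2^0(u,sD(v))$ and $g=\phi_2^0(v,w)$, and $2\wedge 4$, namely $D(u)sP(v)D(w)$. The $2\wedge 2$ self-overlap is the single composition the paper works out in full, resolving it as $uD(s\,\phi_2^0(v,w))-D(us\,\phi_2^0(v,w))$ modulo terms lower than $p$; it has no counterpart among the computations of Theorem~\ref{S2}, so it cannot be dismissed as ``verbatim''. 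By contrast, items on your list such as $D(P(u))P(v)$ and $P(u)D(v)D(w)$ are not separate ambiguity classes but special instances of $D(P(u))v$ (the $2\wedge 3$ overlap) and $P(u)D(v)w$ (the $4\wedge 2$ overlap). To complete the argument you must therefore add the $s$-intervening overlaps of $\phi_2^0$, above all the self-overlap $D(u)sD(v)w$, and verify its triviality; the rest of your plan then goes through as you describe.
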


\begin{proof} As in the proof of  Theorem~\ref{S2},
we write  $i \wedge j$ the composition of OPIs of  $\phi_i$ and $\phi_j$.  There are two kinds of     ambiguities of all  possible compositions in $\OdRB$. Since $\phi_1^0(x,y)$,
$\phi_3^0(x)$, and
$\phi_4^0(x,y)$ have the same leading monomials as in the case $\lambda\neq 0$, the corresponding  ambiguities $i\wedge j$ with $ i, j\in \{1, 3, 4\}$ are the same in the proof of  Theorem~\ref{S2}.
%
%
%
%
%
%
%
%
%
%
%
%
%
%
%
%
%
%
%
%
%
            Since $\phi^0_2(x,y)$ has a different leading monomial, the ambiguities of the case $i\wedge j$ with $i=2$ or $j=2$ are the following:         for arbitrary $u,v,w \in \frakS(Z), q \in \frakS^{\star}(Z) $ and $s \in \frakS(Z) \text{ or } \emptyset$,
 \begin{itemize}

	\item [$1 \wedge 2$] \quad $P\left(\left.q\right|_{D(u)v}\right) P\left(w\right)$, \quad $P\left(u\right) P\left(\left.q\right|_{D(v)w}\right)$;

\item [$2 \wedge 1$] \quad $D\left(\left.q\right|_{P(u)P(v)}\right)w$,\quad $D\left(u\right)\left.q\right|_{P(u)P(v)}$;
	
	\item [$2 \wedge 2$] \quad $\underline{D(u)sD(v)w}$, \quad $D\left(\left.q\right|_{D(u)v}\right)w$,\quad $D\left(u\right)\left.q\right|_{D(v)w}$;

		\item [$2 \wedge 3$] \quad  $\underline{D\left(P(u)\right)v}$, \quad$D\left(\left.q\right|_{D(P(u))}\right)v$,\quad $D\left(u\right)\left.q\right|_{D(P(v))}$;

		\item [$2 \wedge 4$] \quad $\underline{D(u)sP(v)D(w)}$, \quad $D\left(\left.q\right|_{P(u)D(v)}\right)w$,\quad  $D\left(u\right)\left.q\right|_{P(v)D(w)}$;

		\item [$3 \wedge 2$] \quad $D\left(P\left(\left.q\right|_{D(u)v}\right)\right)$;

		\item [$4 \wedge 2$] \quad $\underline{P(u)D(v)w}$, \quad $P\left(\left.q\right|_{D(u)v}\right)D\left(w\right)$,\quad $P\left(u\right)D\left(\left.q\right|_{D(v)w}\right)$.
 \end{itemize}

Almost all the cases can be treated  similarly as in the proof of Theorem~\ref{S2}, except a slight difference in  the case   $2\wedge 2$. In fact,
   let
 $f = \phi_2^0(u,sD(v))$, $g= \phi_2^0(v,w)$ and $p=D(u)sD(v)w$. So we have
$$
\begin{aligned}
(f,g)_p^{D(u)s,D(w)} &=uD(sD(v))w-D(usD(v))w-D(u)svD(w)+D(u)sD(vw)\\
& \equiv -usD(v)D(w)+uD(sD(v)w)+usD(v)D(w)-D(usD(v)w)\\
& \quad +uD(svD(w))-D(usvD(w))-uD(sD(vw))+D(usD(vw))\\
& = uD(s\phi_2^0(v,w))-D(us\phi_2^0(v,w))\\
& \equiv 0 \bmod \left(S_{\OdRB}(Z), p\right).
\end{aligned}
$$
We are done.
\end{proof}

 \begin{thm}
 	Let $Z$ be a set and $A=\bk \cals(Z)\slash I_A$ a nonunital $\bk$-algebra.  Then we have:
	$$\calf^{\OdRB\zhx\Alg}_{\Alg}(A)=\bk\frakS(Z)\slash\left\langle S_{\OdRB}(Z)\cup I_A\right\rangle_\mtOpAlg.$$
	Moreover, assume $I_A$  has a GS  basis $G$ with respect to the degree-lexicographical order $\leq_{\rm {dlex}}$. Then $S_{\OdRB}(Z)\cup G$ is an operated  GS  basis of $\left\langle S_{\OdRB}(Z)\cup I_A\right\rangle_\mtOpAlg$ in $\bk\frakS(Z) $  with respect to $\leq_{\operatorname{PD}}$.
 \end{thm}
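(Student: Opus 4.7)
The plan is a direct application of the lifting theorem for multi-operated algebras, exactly parallel to the $\lambda\neq 0$ case treated in Subsection~\ref{Subsection: Case of lambda non zero}. For the first equality, I specialize Proposition~\ref{free phi algebra}(a) to $\Phi=\OdRB$: that result presents $\calf_{\Alg}^{\OdRB\zhx\Alg}(A)$ as $\bk\frakS(Z)/\langle S_{\OdRB}(\frakS(Z))\cup I_A\rangle_{\mtOpAlg}$, which is exactly the stated presentation.

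For the second assertion, the strategy is to invoke Theorem~\ref{Thm: GS basis for free nonunital Phi algebra over nonunital alg} with $\Phi=\OdRB$ and the monomial order $\leq_{\operatorname{PD}}$ on $\frakS(Z)$. Three hypotheses must be verified. First, that $\OdRB$ is $\Omega$-GS on $\bk\frakS(Z)$ with respect to $\leq_{\operatorname{PD}}$: this is precisely Theorem~\ref{S1}. Second, that the leading monomial $\bar\phi$ of each $\phi\in\OdRB$ carries no subword belonging to $\cals(X)\setminus X$: by inspection the four leading monomials $P(x)P(y)$, $D(x)y$, $D(P(x))$, $P(x)D(y)$ contain no bracket-free subword of length $\geq 2$ in the alphabet $X=\{x,y\}$. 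Third, that for every $\phi\in\OdRB$ and every substitution $u_1,\dots,u_n\in\frakS(Z)$, $\phi(u_1,\dots,u_n)$ either vanishes or has leading monomial $\bar\phi(u_1,\dots,u_n)$: for $\phi_1^0,\phi_2^0,\phi_3^0$ this is exactly the proposition preceding Theorem~\ref{S1}, and for $\phi_4^0(u,v)=P(u)D(v)-D(P(u)v)+uv$ one argues directly by unwinding the definition of $\leq_{\operatorname{PD}}$: the summand $uv$ is strictly smallest under $\leq_{\operatorname{D}}$, while $P(u)D(v)$ and $D(P(u)v)$ agree on all the preliminary preorders $\leq_{\operatorname{D}},\leq_{\operatorname{P}},\leq_{\operatorname{dZ}},\leq_{\operatorname{GD}},\leq_{\operatorname{GP}}$, and at the $\leq_{\operatorname{Dlex}}$ tie-break the leading letter $\lfloor u\rfloor_P$ of $P(u)D(v)$ exceeds the leading letter $\lfloor P(u)v\rfloor_D$ of $D(P(u)v)$, because $P$-brackets dominate $D$-brackets in the construction of $\leq_{\operatorname{Dlex}}$ (Section~\ref{section: order}).

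Because the restriction of $\leq_{\operatorname{PD}}$ to $\cals(Z)$ agrees with $\leq_{\rm dlex}$, the hypothesis that $G$ is a GS basis of $I_A$ in $\bk\cals(Z)$ with respect to $\leq_{\rm dlex}$ matches Theorem~\ref{Thm: GS basis for free nonunital Phi algebra over nonunital alg} exactly. The theorem then yields that $S_{\OdRB}(\frakS(Z))\cup G$ is an $\Omega$-GS basis of $\langle S_{\OdRB}(\frakS(Z))\cup I_A\rangle_{\mtOpAlg}$ in $\bk\frakS(Z)$ with respect to $\leq_{\operatorname{PD}}$, which is the desired conclusion. The only substantial ingredient is the GS property of $\OdRB$ itself, already settled in Theorem~\ref{S1}; the remaining verifications are short inspections flowing from how $\leq_{\operatorname{PD}}$ was engineered, so there is no real obstacle beyond bookkeeping here.
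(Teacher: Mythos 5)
Your proposal is correct and follows essentially the same route the paper intends for this omitted proof: Proposition~\ref{free phi algebra}(a) gives the presentation of $\calf^{\OdRB\zhx\Alg}_{\Alg}(A)$, and the GS statement follows from Theorem~\ref{S1} together with Theorem~\ref{Thm: GS basis for free nonunital Phi algebra over nonunital alg}, once one checks that the leading monomials of $\OdRB$ have no subword in $\cals(X)\backslash X$ and that leading monomials are stable under substitution (exactly as in the $\lambda\neq 0$ case). One small inaccuracy in your verification for $\phi_4^0$: the comparison $P(u)D(v)>_{\operatorname{PD}}D(P(u)v)$ is decided already at the degree stage of $\leq_{\operatorname{Dlex}}$ (the two words have breadths $2$ and $1$ in the extended alphabet), not by comparing leading letters, though your conclusion agrees with the paper's.
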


As a consequence, we obtain a linear basis.
\begin{thm}
	Let $Z$ be a set and $A=\bk \cals(Z)\slash I_A$ a nonunital $\bk$-algebra with a GS  basis $G$ with respect to $\leq_{\rm{dlex}}$.   Then  the set
 $\Irr(S_{\OdRB}(Z)\cup G)$ which is by definition the complement of
 $$\left\lbrace q|_{\bar{s}},q|_{P(u)P(v)},q|_{D(u)v},q|_{D(P(u))},q|_{P(u)D(v)}, s\in G,q\in\frakSstar(Z),u,v\in\frakS(Z)\right\rbrace$$ in $\frakS(Z)$   is a linear basis of the free nonunital $0$-differential Rota-Baxter algebra   $\calf^{\OdRB\zhx\Alg}_{\Alg}(A)$ over $A$.

\end{thm}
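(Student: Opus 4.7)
The plan is to invoke the Composition-Diamond Lemma in the nonunital multi-operated setting (Theorem~\ref{Thm: nonunital CD}) together with the preceding theorem that identifies $S_{\OdRB}(Z)\cup G$ as an $\Omega$-GS basis of $\langle S_{\OdRB}(Z)\cup I_A\rangle_{\mtOpAlg}$ in $\bk\frakS(Z)$ with respect to $\leq_{\operatorname{PD}}$. Since the free nonunital $0$-differential Rota-Baxter algebra over $A$ is realized as $\calf^{\OdRB\zhx\Alg}_{\Alg}(A)=\bk\frakS(Z)/\langle S_{\OdRB}(Z)\cup I_A\rangle_{\mtOpAlg}$ by the previous theorem, Theorem~\ref{Thm: nonunital CD} immediately yields that $\Irr(S_{\OdRB}(Z)\cup G)$ is a $\bk$-linear basis of this quotient, once we verify what this set is explicitly.

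The key step is therefore to unwind the definition of $\Irr(S_{\OdRB}(Z)\cup G)$. By definition, this is $\frakS(Z)$ with all monomials of the form $q|_{\bar{h}}$ removed, where $h$ ranges over the elements of $S_{\OdRB}(Z)\cup G$ and $q\in \frakSstar(Z)$. The leading monomials of $G$ under $\leq_{\rm{dlex}}$ give rise to the subwords $q|_{\bar{s}}$ with $s\in G$. For the four OPIs in $\OdRB$, I have already established earlier in Section~\ref{Subsection: case zero} that, for all $u,v\in\frakS(Z)$, the leading monomial of $\phi_1^0(u,v)$, $\phi_2^0(u,v)$, $\phi_3^0(u)$ and $\phi_4^0(u,v)$ under $\leq_{\operatorname{PD}}$ is precisely $P(u)P(v)$, $D(u)v$, $D(P(u))$ and $P(u)D(v)$ respectively. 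So as $u,v$ range over $\frakS(Z)$, the sets of forbidden subwords contributed by $\OdRB$ are exactly $\{P(u)P(v), D(u)v, D(P(u)), P(u)D(v) : u,v\in \frakS(Z)\}$, giving exactly the set described in the statement.

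I would then assemble the argument: by the previous theorem, $S_{\OdRB}(Z)\cup G$ is an $\Omega$-GS basis in $\bk\frakS(Z)$ with respect to $\leq_{\operatorname{PD}}$. Applying part~(b) of Theorem~\ref{Thm: nonunital CD} to this $\Omega$-GS basis, we obtain the decomposition
\[
\bk\frakS(Z)=\bk\Irr(S_{\OdRB}(Z)\cup G)\oplus\langle S_{\OdRB}(Z)\cup G\rangle_{\mtOpAlg},
\]
and so $\Irr(S_{\OdRB}(Z)\cup G)$ projects to a $\bk$-basis of the quotient $\calf^{\OdRB\zhx\Alg}_{\Alg}(A)$.

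There is no real obstacle here: the theorem is essentially a translation of the GS basis statement into the language of normal forms, and the only care needed is the bookkeeping of which subwords must be avoided. The nontrivial content has already been established in verifying that $S_{\OdRB}(Z)\cup G$ is an $\Omega$-GS basis and in pinning down the leading monomials of $\phi_1^0,\phi_2^0,\phi_3^0,\phi_4^0$ under $\leq_{\operatorname{PD}}$.
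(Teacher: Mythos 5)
Your proposal is correct and follows exactly the paper's route: the paper also deduces this theorem directly from the Composition--Diamond Lemma (Theorem~\ref{Thm: nonunital CD}) applied to the $\Omega$-GS basis $S_{\OdRB}(Z)\cup G$ established in the preceding theorem, with the forbidden subwords read off from the leading monomials $P(u)P(v)$, $D(u)v$, $D(P(u))$, $P(u)D(v)$ of the OPIs in $\OdRB$. The only implicit point worth keeping in mind is that $\left\langle S_{\OdRB}(Z)\cup G\right\rangle_{\mtOpAlg}=\left\langle S_{\OdRB}(Z)\cup I_A\right\rangle_{\mtOpAlg}$ because $G$ generates $I_A$, which you use when passing to the quotient.
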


\subsection{ Case of unital   algebras}\label{Subsection: case unital   algebras}\

Now we consider  unital differential Rota-Baxter algebras. Since the proofs are similar to those in the previous subsections, we omit most of them.
The study still divided into  cases of $\lambda \neq 0$ and $\lambda=0$.

\medskip

When $\lambda \neq 0$, since unital differential Rota-Baxter algebras have the condition    $D(1)=0$, put $\udRB$ to be the union of $\dRB$  with  $\{D(1)\}$, but by abuse of notation,  in $\dRB$,      $x,y$ take their values in $  \frakM(Z)$ instead of $\frakS(Z)$.

\begin{remark}
We have:
$$
\left\{\begin{array}{lll}
\phi_2(u,v)\equiv 0  & \text{ when } u=1 \text{ or } v=1; \\
\phi_4(u,v)\equiv -D(P(u))+u=-\phi_3(u)  & \text{ when }  v=1; \\
\phi_5(u,v)\equiv -D(P(v))+v=-\phi_3(v)  & \text{ when }  u=1.
\end{array}\right.
$$
So adding of the unity $1$ will not produce new OPIs.
 Moreover,
it is clear that except the above cases, the leading monomials of OPIs in  $\dRB$ are  the same  with respect to $\leq_{\operatorname{PD}}$ and  $\leq_{\operatorname{uPD}}$ by Proposition~\ref{uDl}.
\end{remark}

With similar proofs as in Subsection~\ref{Subsection: Case of lambda non zero}, we can prove the following results.

\begin{thm}\label{uS1}
$\udRB$ is $\Omega$-GS in $\bk\frakM(Z)$ with respect to $\leq_{\operatorname{uPD}} $.
\end{thm}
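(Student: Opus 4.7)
The plan is to reduce Theorem~\ref{uS1} to Theorem~\ref{S2} by carefully accounting for the two changes in passing from the nonunital to the unital setting: the variables of $\phi_1,\dots,\phi_5$ now take values in $\frakM(Z)$ (so $1$ is allowed), and the identity $D(1)=0$ is appended to the system.

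First I would check that the leading monomials of $\phi_1,\dots,\phi_5$ under $\leq_{\operatorname{uPD}}$ remain what they were under $\leq_{\operatorname{PD}}$, at least whenever no substituted variable equals $1$. This is clear because, for $u,v,w\in\frakM(Z)\setminus\{1\}=\frakS(Z')$, the invariants $\deg_D$, $\deg_P$, $\deg_{Z'}$, $\deg_{GD}$ and $\deg_{GP}$ are computed exactly as in the nonunital case. The specializations in which some variable is $1$ are precisely those listed in the remark preceding the theorem: each collapses either to zero modulo $D(1)$ or to $\pm\phi_3$, so allowing the value $1$ produces no essentially new OPI.

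Next I would analyse the compositions. Ambiguities $i\wedge j$ with $i,j\in\{1,\dots,5\}$ and all substituted variables in $\frakM(Z)\setminus\{1\}$ are in bijection with the list tabulated in the proof of Theorem~\ref{S2}, and the triviality computations go through verbatim after replacing $\leq_{\operatorname{PD}}$ by $\leq_{\operatorname{uPD}}$, using Proposition~\ref{uDl} to compare monomials that involve the placeholders $\dagger_P=P(1)$ and $\dagger_D=D(1)$. Ambiguities in which some variable equals $1$ are then disposed of by the remark: the composition either vanishes identically modulo $D(1)$, or is absorbed by an existing element of $\udRB$. For the new identity $D(1)$ itself, observe that $|D(1)|=1$, so intersection compositions involving $D(1)$ are automatically absent. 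The only inclusion compositions to consider are of the form $(\phi_i,D(1))_p^q$, where $D(1)$ appears as a subword of some $\overline{\phi}_i(u_1,\dots,u_n)$; since the leading monomials of $\phi_1,\dots,\phi_5$ have no subword in $\calm(X)\setminus X$, any such occurrence of $D(1)$ must sit inside one of the $u_j$, whence the composition is a complete inclusion in the sense of Lemma~\ref{including} and is therefore trivial modulo $(S_{\udRB}(\frakM(Z)),p)$. Conversely, $D(1)$ contains no variable, so no $\overline{\phi}_i$ can be a subword of $D(1)$, ruling out compositions $(D(1),\phi_i)$.

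The main obstacle I foresee is the bookkeeping of the $1$-specializations of compositions: one must check, case by case among the underlined ambiguities of Theorem~\ref{S2}, that when any argument is set equal to $1$ the resulting composition still reduces using only the identities already in $\udRB$, and in particular that the reductions respect the monomial order $\leq_{\operatorname{uPD}}$ rather than $\leq_{\operatorname{PD}}$. This is where Proposition~\ref{uDl} carries the decisive weight, as it certifies the correct ordering of the critical pairs such as $P(u)\dagger_P$ versus $P(u\dagger_P)$, $\dagger_P P(v)$ versus $P(\dagger_P v)$, $D(u)\dagger_P$ versus $D(u\dagger_P)$, and $\dagger_P D(v)$ versus $D(\dagger_P v)$.
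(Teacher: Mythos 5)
Your proposal is correct and is essentially the paper's own argument: the paper establishes Theorem~\ref{uS1} only by observing (in the preceding remark, together with Proposition~\ref{uDl}) that specializing a variable to $1$ produces nothing beyond $D(1)$ and $\pm\phi_3$ and that the leading monomials are otherwise unchanged under $\leq_{\operatorname{uPD}}$, and then invoking the composition analysis of Theorem~\ref{S2} ``with similar proofs'' --- exactly the reduction you outline, with your discussion of the compositions involving $D(1)$ supplying detail the paper leaves implicit. One small correction: in the regime where all $u_j\neq 1$, the reason an occurrence of $D(1)$ inside $\overline{\phi_i}(u_1,\dots,u_n)$ must lie within some $u_j$ is simply that every pattern-level $D$-bracket of $\overline{\phi_i}$ contains $u_j$ or $P(u_j)\neq 1$, not the `no subword in $\calm(X)\setminus X$' hypothesis, which concerns products of variables in the pattern.
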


 \begin{thm}
 	Let $Z$ be a set and $A=\bk \calm(Z)\slash I_A$ a unital $\bk$-algebra.  Then we have:
	$$\calf^{\udRB\zhx\uAlg}_{\uAlg}(A)=\bk\frakM(Z)\slash\left\langle S_{\udRB}(Z)\cup I_A\right\rangle_\mtuOpAlg.$$
	Moreover, assume $I_A$  has a GS  basis $G$ with respect to the degree-lexicographical order $\leq_{\rm {dlex}}$. Then $S_{\udRB}(Z)\cup G$ is an operated  GS  basis of $\left\langle S_{\udRB}(Z)\cup I_A\right\rangle_\mtuOpAlg$ in $\bk\frakM(Z) $  with respect to $\leq_{\operatorname{uPD}}$.

 \end{thm}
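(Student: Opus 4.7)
The first displayed identity expresses the free unital $\udRB$-algebra over $A$ as a quotient of $\bk\frakM(Z)$; it is a direct instance of Proposition~\ref{free phi algebra}(b) applied to $\Phi=\udRB$, with the convention adopted in this paper that $S_{\udRB}(Z)$ and $S_{\udRB}(\frakM(Z))$ generate the same $\Omega$-ideal.

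The substantive assertion is that $S_{\udRB}(Z)\cup G$ is an operated GS basis in $\bk\frakM(Z)$ with respect to $\leq_{\operatorname{uPD}}$, and the plan is to derive it by invoking Theorem~\ref{Thm: GS basis for free unital Phi algebra over unital alg} with $\Phi=\udRB$ and monomial order $\leq_{\operatorname{uPD}}$. Four hypotheses have to be verified: (i) $\udRB$ is $\Omega$-GS in $\bk\frakM(Z)$ with respect to $\leq_{\operatorname{uPD}}$, which is exactly Theorem~\ref{uS1}; (ii) $G$ is a classical GS basis of $I_A$ in $\bk\calm(Z)$ with respect to the restriction of $\leq_{\operatorname{uPD}}$ to $\calm(Z)$; (iii) the leading monomial of each OPI in $\udRB$ has no subword in $\calm(X)\setminus X$; and (iv) for every $\phi(x_1,\dots,x_n)\in\udRB$ and all $u_1,\dots,u_n\in\frakM(Z)$, the element $\phi(u_1,\dots,u_n)$ either vanishes or has leading monomial $\overline{\phi}(u_1,\dots,u_n)$ under $\leq_{\operatorname{uPD}}$.

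For (ii), observe that on bracket-free words $u\in\calm(Z)$ all of $\deg_D(u), \deg_P(u), \deg_{GD}(u), \deg_{GP}(u)$ vanish, so the order $\leq_{\operatorname{uPD}}$ restricts on $\calm(Z)$ to $\leq_{\operatorname{dZ}}$ refined by $\leq_{\operatorname{Dlex}_0}=\leq_{\rm{dlex}}$, with the empty word $1$ at the bottom---precisely the order with respect to which $G$ is assumed to be GS. For (iii), inspection of the six leading monomials $P(x)P(y)$, $D(x)D(y)$, $D(P(x))$, $P(x)D(y)$, $D(x)P(y)$ and $D(1)$ shows that none contains an unbracketed product of two or more elements of $X$.

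Hypothesis (iv) is the principal obstacle. When all $u_i$ lie in $\frakS(Z)$, the required identification of leading monomials under $\leq_{\operatorname{PD}}$ was established in the propositions preceding Theorem~\ref{S2}, and it transfers to $\leq_{\operatorname{uPD}}$ through Proposition~\ref{uDl}, as recorded in the remark preceding Theorem~\ref{uS1}. The remaining work is to catalogue the degenerate substitutions in which some $u_i=1$: one finds $\phi_2(1,v)=\phi_2(u,1)=0$, $\phi_4(u,1)=-\phi_3(u)$ and $\phi_5(1,v)=-\phi_3(v)$, and otherwise the leading monomial is unaffected; the constant OPI $D(1)$ carries no free variables and imposes no condition. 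Once (iv) is in hand, Theorem~\ref{Thm: GS basis for free unital Phi algebra over unital alg} yields the claimed $\Omega$-GS basis.
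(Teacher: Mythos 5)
Your overall route is exactly the paper's: the paper offers no separate argument here, but deduces the statement from Proposition~\ref{free phi algebra}(b), Theorem~\ref{uS1} and Theorem~\ref{Thm: GS basis for free unital Phi algebra over unital alg}, in parallel with the nonunital case, and your verifications (i)--(iii) are fine. The problem is your verification of hypothesis (iv) at the degenerate substitutions. The identities you assert are false in $\bk\frakM(Z)$: for instance $\phi_4(u,1)=P(u)D(1)-D(P(u))+u+\lambda\, u D(1)$, which is not $-\phi_3(u)$; the collapse to $-\phi_3(u)$ happens only after reducing by the relation $D(1)$, which is what the symbol ``$\equiv$'' means in the paper's remark preceding Theorem~\ref{uS1} (that remark is about the completion producing no new OPIs, not about hypothesis (iv)). Worse, if your equalities were literally true they would refute (iv) rather than confirm it: $-\phi_3(u)$ neither vanishes nor has leading monomial $\overline{\phi_4}(u,1)=P(u)D(1)$, so the condition ``$\phi(u_1,\dots,u_n)$ vanishes or its leading monomial is $\overline{\phi}(u_1,\dots,u_n)$'' would fail and Theorem~\ref{Thm: GS basis for free unital Phi algebra over unital alg} could not be invoked.

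What actually makes (iv) hold is the design of $\leq_{\operatorname{uPD}}$ in Definition~\ref{udl}, where $P(1)=\dagger_P$ and $D(1)=\dagger_D$ are kept as letters of $Z'$ rather than being set to zero. Thus $\phi_2(1,v)=D(1)D(v)+\lambda^{-1}D(1)v\neq 0$ and its leading monomial is $D(1)D(v)=\overline{\phi_2}(1,v)$ (it dominates in $D$-degree), and similarly $\phi_4(u,1)$ and $\phi_5(1,v)$ are nonzero with leading monomials $P(u)D(1)$ and $D(1)P(v)$: with the degree conventions of Definition~\ref{udl} these terms beat $D(P(u))$, respectively $D(P(v))$, since all earlier degrees agree and they carry the extra letter $\dagger_D$, hence a strictly larger $\deg_{Z'}$; the substitutions of $1$ into $\phi_1$ are exactly what Proposition~\ref{uDl}(a)--(c) handles. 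With this corrected catalogue (and noting that $D(1)$ is a constant OPI imposing no condition), hypothesis (iv) is verified and the remainder of your argument goes through as in the paper.
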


\begin{thm}
	Let $Z$ be a set and $A=\bk \calm(Z)\slash I_A$ a unital $\bk$-algebra with a GS  basis $G$ with respect to $\leq_{\rm{dlex}}$.   Then  the set
 $\Irr(S_{\udRB}(Z)\cup G)$ which is by definition the complement of
 $$\left\lbrace q|_{\bar{s}},q|_{P(u)P(v)},q|_{D(u)D(v)},q|_{D(P(u))},q|_{P(u)D(v)}, q|_{D(u)P(v)}, q|_{D(1)}, s\in G,q\in\frakMstar(Z),u,v\in\frakM(Z)\right\rbrace$$ in $\frakM(Z)$   is a linear basis of the free unital $\lambda$-differential Rota-Baxter algebra   $\calf^{\udRB\zhx\uAlg}_{\uAlg}(A)$ over $A$.

\end{thm}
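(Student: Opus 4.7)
The plan is to invoke the Composition-Diamond Lemma for the unital multi-operated setting (Theorem~\ref{Thm: unital CD}) applied to the operated GS basis established in the immediately preceding theorem. That preceding theorem tells us $S_{\udRB}(Z)\cup G$ is an $\Omega$-GS basis of the $\Omega$-ideal $\left\langle S_{\udRB}(Z)\cup I_A\right\rangle_{\mtuOpAlg}$ in $\bk\frakM(Z)$ with respect to $\leq_{\operatorname{uPD}}$. Thus the Composition-Diamond Lemma yields
$$\bk\frakM(Z) \;=\; \bk\,\Irr(S_{\udRB}(Z)\cup G)\;\oplus\;\left\langle S_{\udRB}(Z)\cup I_A\right\rangle_{\mtuOpAlg},$$
so the image of $\Irr(S_{\udRB}(Z)\cup G)$ is a $\bk$-basis of the quotient, which by Proposition~\ref{free phi algebra}(b) is exactly the free unital $\udRB$-algebra $\calf^{\udRB\zhx\uAlg}_{\uAlg}(A)$.

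It then remains to check that the abstract description $\Irr(S_{\udRB}(Z)\cup G)=\frakM(Z)\setminus\{q|_{\overline{s}}\mid s\in S_{\udRB}(Z)\cup G,\ q\in\frakMstar(Z)\}$ coincides with the concrete set displayed in the statement. To do this I would enumerate the leading monomials under $\leq_{\operatorname{uPD}}$: for arbitrary $u,v\in\frakM(Z)$, the leading monomials of $\phi_1(u,v),\phi_2(u,v),\phi_3(u),\phi_4(u,v),\phi_5(u,v)$ and of the extra relation $D(1)$ are respectively $P(u)P(v),\ D(u)D(v),\ D(P(u)),\ P(u)D(v),\ D(u)P(v),\ D(1)$. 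These are precisely the computations carried out in Subsections~\ref{Subsection: Case of lambda non zero} and \ref{Subsection: case unital   algebras}, and they transfer from $\leq_{\operatorname{PD}}$ to $\leq_{\operatorname{uPD}}$ by virtue of Proposition~\ref{uDl}, which guarantees that the unity $1$ does not disturb the ordering of these leading terms. Together with the leading monomials $\overline{s}$ for $s\in G$, the forbidden subword list matches the one displayed in the statement.

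Since everything ultimately rests on Theorem~\ref{uS1} and the unital Composition-Diamond Lemma (Theorem~\ref{Thm: unital CD}), no new obstacle is expected here; the argument is essentially a corollary that packages the operated GS basis into an explicit linear basis. The only mild point of care is the bookkeeping around $D(1)$ and the various $\lfloor 1\rfloor_P$-monomials, to ensure that the inequalities recorded in Proposition~\ref{uDl} really do force the listed monomials to be the leading terms rather than some $1$-involving alternative; this is however already settled in the proof of Theorem~\ref{uS1}.
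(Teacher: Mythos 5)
Your proposal is correct and follows the paper's own route: the paper likewise obtains this theorem as a direct consequence of the unital Composition--Diamond Lemma (Theorem~\ref{Thm: unital CD}) applied to the operated GS basis $S_{\udRB}(Z)\cup G$ from the preceding theorem, with the leading monomials (including the substitutions involving $1$) controlled exactly as you indicate via Proposition~\ref{uDl} and the remark preceding Theorem~\ref{uS1}. No substantive difference from the paper's argument.
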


\medskip

When $\lambda= 0$,  denote $\OudRB:=\OdRB$ (again   by abuse of notation,   $\OdRB$ is understood that     $u, v$ take their values in $  \frakM(X)$ instead of $\frakS(X)$).

 \begin{remark}
In $\OudRB$, we have
$$
\phi_2^0(1,1)=D(1)+D(1)-D(1)=D(1),
$$
so it is not necessary to add $D(1)$ into $\OudRB$.

 Note that $\phi_4^0(u,1) \equiv -D(P(v))+v=-\phi_3^0(v)$, so adding the unity $1$ will not induce any new OPI.
\end{remark}

By using similar proofs in Subsection~\ref{Subsection: case zero}, one can show the following results.

\begin{thm}\label{uS2}
$\OudRB$ is  $\Omega$-GS  in $\bk\frakM(Z)$ with $\leq_{\operatorname{uPD}} $.
\end{thm}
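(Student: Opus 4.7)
The plan is to mimic the argument of Theorem~\ref{S1} but in the unital setting $\bk\frakM(Z)$ with the monomial order $\leq_{\operatorname{uPD}}$. First I would verify, using Proposition~\ref{uDl}, that for any $u,v\in\frakM(Z)$ the leading monomial of $\phi_i^0(u,v)$ under $\leq_{\operatorname{uPD}}$ agrees with the one computed in Subsection~\ref{Subsection: case zero}: namely $P(u)P(v)$, $D(u)v$, $D(P(u))$, $P(u)D(v)$ for $i=1,2,3,4$ respectively. Since $\leq_{\operatorname{uPD}}$ restricts to $\leq_{\operatorname{PD}}$ on $\frakS(Z)$, all leading-monomial data for substitutions taking values in $\frakS(Z)\subset\frakM(Z)$ match those used in the proof of Theorem~\ref{S1}. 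The only substitutions involving the unit that require explicit checking are collected in the remark preceding the theorem, where $\phi_2^0(1,1)=D(1)$ is noted to lie in $S_{\OudRB}(\frakM(Z))$, making $D(1)\equiv 0$ automatic.

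Next I would enumerate the ambiguities $i\wedge j$ exactly as in the proof of Theorem~\ref{S1}; the list is identical, because the leading monomials are the same. By Lemma~\ref{including}, every complete inclusion composition is automatically trivial. The remaining intersection and non-complete inclusion compositions for which all substituted variables lie in $\frakS(Z)$ reduce trivially by the same calculations as in Theorem~\ref{S1}, now interpreted in $\bk\frakM(Z)$.

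The genuinely new cases are those compositions in which at least one substituted variable equals $1$. I would handle them by the identities already recorded in the remark: $\phi_2^0(u,1)=uD(1)$, $\phi_2^0(1,v)=D(1)v$, $\phi_2^0(1,1)=D(1)$, and $\phi_4^0(u,1)\equiv-\phi_3^0(u) \pmod{D(1)}$. Each such unit-substituted instance can be rewritten as an element of the form already considered in Theorem~\ref{S1}, plus an expression divisible by $D(1)=\phi_2^0(1,1)\in S_{\OudRB}(\frakM(Z))$; both pieces are trivial modulo $S_{\OudRB}(\frakM(Z))$ under the appropriate monomial. No additional OPI needs to be adjoined, which is precisely the content of the remark.

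The main obstacle is the combinatorial bookkeeping: when $1$ is allowed inside the patterns $P(u)P(v)$, $D(u)v$, $D(P(u))$, $P(u)D(v)$, one obtains several extra overlap patterns (for example $P(1)P(v) \wedge P(u)P(v)$ or $\phi_4^0(u,1) \wedge \phi_3^0(u)$) that did not appear in Theorem~\ref{S1}. One must check in each case that Proposition~\ref{uDl} still singles out the expected leading monomial and that the reduction produces an expression congruent to zero modulo the lower OPIs together with $D(1)$. Once this bookkeeping is carried out, every new composition is a direct consequence of the nonunital reductions from Theorem~\ref{S1} combined with the unit-substitution identities above, and the theorem follows.
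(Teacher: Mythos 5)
Your proposal is correct and follows essentially the same route as the paper, which only sketches this proof: it invokes the remark that $\phi_2^0(1,1)=D(1)$ and $\phi_4^0(u,1)\equiv-\phi_3^0(u)$ (so no new OPIs arise from the unit), relies on Proposition~\ref{uDl} to see that the leading monomials under $\leq_{\operatorname{uPD}}$ agree with those in the nonunital case, and then repeats the composition checks of Theorem~\ref{S1} in $\bk\frakM(Z)$. Your explicit bookkeeping of the unit-substituted instances such as $\phi_2^0(u,1)=uD(1)$ is exactly the detail the paper leaves implicit.
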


 \begin{thm}
 	Let $Z$ be a set and $A=\bk \calm(Z)\slash I_A$ a unital $\bk$-algebra.  Then we have:
	$$\calf^{\OudRB\zhx\uAlg}_{\uAlg}(A)=\bk\frakM(Z)\slash\left\langle S_{\OudRB}(Z)\cup I_A\right\rangle_\mtuOpAlg.$$
	Moreover, assume $I_A$  has a GS  basis $G$ with respect to the degree-lexicographical order $\leq_{\rm {dlex}}$. Then $S_{\OudRB}(Z)\cup G$ is an operated  GS  basis of $\left\langle S_{\OudRB}(Z)\cup I_A\right\rangle_\mtuOpAlg$ in $\bk\frakM(Z) $  with respect to $\leq_{\operatorname{uPD}}$.

 \end{thm}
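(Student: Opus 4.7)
The proof naturally splits into the two assertions of the theorem, and for each I would follow the pattern already established in Subsection~\ref{Subsection: case unital   algebras} (the case $\lambda\neq 0$). The first equality is essentially a citation: since $\OudRB \subseteq \bk\frakM(X)$ is a family of multilinear OPIs, Proposition~\ref{free phi algebra}(b) identifies $\calf^{\OudRB\zhx\uAlg}_{\uAlg}(A)$ with $\bk\frakM(Z)/\langle S_{\OudRB}(Z)\cup I_A\rangle_{\mtuOpAlg}$, using $A=\bk\calm(Z)/I_A$.

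For the second assertion, the plan is to invoke Theorem~\ref{Thm: GS basis for free unital Phi algebra over unital alg}. I have to check four hypotheses. First, that $\OudRB$ is $\Omega$-GS in $\bk\frakM(Z)$ with respect to $\leq_{\operatorname{uPD}}$; this is exactly Theorem~\ref{uS2}. Second, that $G$ is a GS basis of $I_A$ in $\bk\calm(Z)$ with respect to the restriction of $\leq_{\operatorname{uPD}}$ to $\calm(Z)$; since on $\calm(Z)$ the degrees $\deg_D$, $\deg_P$, $\deg_{GD}$, $\deg_{GP}$ all vanish identically, the definition of $\leq_{\operatorname{uPD}}$ collapses to $\leq_{\mathrm{dZ}}$ followed by $\leq_{\mathrm{Dlex}}$, which on words without brackets is precisely $\leq_{\mathrm{dlex}}$, so the hypothesis on $G$ in the statement transfers directly. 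Third, inspecting the leading monomials $P(x)P(y)$, $D(x)y$, $D(P(x))$, $P(x)D(y)$ of the four OPIs in $\OudRB$, none contains a subword lying in $\calm(X)\setminus X$ (every non-single-variable factor is wrapped in a bracket).

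The fourth condition is the main point requiring care: for each $\phi\in\OudRB$ and each substitution $u_1,\ldots,u_n\in\frakM(Z)$, the polynomial $\phi(u_1,\ldots,u_n)$ either vanishes or has leading monomial $\overline{\phi}(u_1,\ldots,u_n)$. For arguments in $\frakS(Z)=\frakM(Z)\setminus\{1\}$ this is exactly the content of the propositions preceding Theorem~\ref{S1}, so the real work is to handle the cases where one or more $u_i$ equals $1$, which is new to the unital setting. Here I would write $\frakM(Z)=\frakS(Z')\sqcup\{1\}$ with $Z'=Z\sqcup\{\dagger_P,\dagger_D\}$ as in Definition~\ref{udl}, and reduce the problem to a finite case analysis: $\phi_1^0$ is unaffected since $P(1)=\dagger_P$ behaves like a letter of $Z'$; for $\phi_2^0(u,v)$ the only degenerate cases are $u=1$ or $v=1$, where I would check directly that the leading term remains $D(u)v$ (with the convention $D(1)=\dagger_D$), or the whole expression collapses; for $\phi_3^0$ there is nothing to check since it has only one variable over $\frakM(Z)$; for $\phi_4^0(u,v)$, the only subtle case is $v=1$, where $\phi_4^0(u,1)=P(u)\dagger_D-D(P(u))+u$, and I would verify that $P(u)\dagger_D >_{\operatorname{uPD}} D(P(u))$ using Proposition~\ref{uDl}(e) (with $\dagger_P$ replaced by $\dagger_D$ in the analogous statement, which follows from the same $\deg_{GD}$ comparison) and $P(u)\dagger_D >_{\operatorname{uPD}} u$ by $\deg_D$.

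Once this verification is complete, Theorem~\ref{Thm: GS basis for free unital Phi algebra over unital alg} delivers the conclusion that $S_{\OudRB}(Z)\cup G$ is an operated GS basis of $\langle S_{\OudRB}(Z)\cup I_A\rangle_{\mtuOpAlg}$ with respect to $\leq_{\operatorname{uPD}}$. The main obstacle is really the bookkeeping in the fourth hypothesis, where the interplay between substitutions sending some $u_i$ to $1$ and the coding $D(1)=\dagger_D$, $P(1)=\dagger_P$ must be tracked so that the $\leq_{\operatorname{uPD}}$-leading term still matches $\overline{\phi}(u_1,\ldots,u_n)$; the refinements in Proposition~\ref{uDl} were designed precisely to handle these configurations and should carry the argument through.
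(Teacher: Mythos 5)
Your overall route is exactly the one the paper intends (it omits the proof, saying it is ``similar'' to the earlier cases): identify $\calf^{\OudRB\zhx\uAlg}_{\uAlg}(A)$ via Proposition~\ref{free phi algebra}(b), then feed Theorem~\ref{uS2} into Theorem~\ref{Thm: GS basis for free unital Phi algebra over unital alg}, after checking that the restriction of $\leq_{\operatorname{uPD}}$ to $\calm(Z)$ is $\leq_{\rm dlex}$ and that the leading monomials $P(x)P(y)$, $D(x)y$, $D(P(x))$, $P(x)D(y)$ have no subword in $\calm(X)\setminus X$. Those parts of your verification are fine and match the paper.

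However, your verification of the fourth hypothesis contains a genuine error at the one place where the unital case actually differs from the nonunital one. For $u\in\frakM(Z)$ with $u\neq 1$ one has
$$\phi_2^0(u,1)=D(u)\cdot 1+u\,D(1)-D(u\cdot 1)=u\,D(1),$$
which neither vanishes nor has leading monomial $\overline{\phi_2^0}(u,1)=D(u)$; so your claim that ``the leading term remains $D(u)v$ or the whole expression collapses'' is false, and the hypothesis of Theorem~\ref{Thm: GS basis for free unital Phi algebra over unital alg}, as literally stated, fails for $\OudRB$. The conclusion is still reachable, but only by the observation the paper makes in the remark preceding Theorem~\ref{uS2}: $D(1)=\phi_2^0(1,1)$ already lies in $S_{\OudRB}(\frakM(Z))$, and $\phi_2^0(u,1)=q|_{\phi_2^0(1,1)}$ with $q=u\star$ (similarly $\phi_2^0(1,v)=D(1)v$, and $\phi_4^0(u,1)$ reduces modulo $D(1)$ to $-\phi_3^0(u)$), so the degenerate substitutions create no new leading monomials and the corresponding compositions are trivially handled; one must therefore either treat the substitutions with some $u_i=1$ separately when running the argument of Theorem~\ref{Thm: GS basis for free unital Phi algebra over unital alg}, or restrict the leading-monomial check to nondegenerate substitutions and dispose of the degenerate ones by this reduction. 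Relatedly, your comparison $P(u)D(1)>_{\operatorname{uPD}}D(P(u))$ for $\phi_4^0(u,1)$ is not justified by Proposition~\ref{uDl} (whose item (e) concerns $\dagger_P$, not $\dagger_D$) and depends on a convention about whether $\dagger_D=\lfloor 1\rfloor_D$ counts toward $\deg_D$ that Definition~\ref{udl} does not fix; if $\dagger_D$ is treated as a plain letter, the leading monomial of $\phi_4^0(u,1)$ is $D(P(u))$, and this case too must be handled by the remark rather than by the naive hypothesis. Until these $u_i=1$ configurations are dealt with as above, the appeal to Theorem~\ref{Thm: GS basis for free unital Phi algebra over unital alg} is not justified as written.
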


\begin{thm}\label{different RB}
	Let $Z$ be a set and $A=\bk \calm(Z)\slash I_A$ a unital $\bk$-algebra with a GS  basis $G$ with respect to $\leq_{\rm{dlex}}$.   Then  the set
 $\Irr(S_{\OudRB}(Z)\cup G)$ which is by definition the complement of
 $$\left\lbrace q|_{\bar{s}},q|_{P(u)P(v)},q|_{D(u)v},q|_{D(P(u))},q|_{P(u)D(v)},  s\in G,q\in\frakMstar(Z),u,v\in\frakM(Z)\right\rbrace$$ in $\frakM(Z)$   is a linear basis of the free unital $0$-differential Rota-Baxter algebra   $\calf^{\OudRB\zhx\uAlg}_{\uAlg}(A)$ over $A$.

\end{thm}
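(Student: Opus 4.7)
The plan is to obtain Theorem~\ref{different RB} as a direct corollary of the preceding theorem (which already asserts that $S_{\OudRB}(Z)\cup G$ is an $\Omega$-GS basis of $\langle S_{\OudRB}(Z)\cup I_A\rangle_{\mtuOpAlg}$ in $\bk\frakM(Z)$ with respect to $\leq_{\operatorname{uPD}}$) together with the Composition-Diamond Lemma in the unital multi-operated setting, namely Theorem~\ref{Thm: unital CD}.

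First I would invoke Proposition~\ref{free phi algebra}(b) to identify $\calf^{\OudRB\zhx\uAlg}_{\uAlg}(A)$ with the quotient $\bk\frakM(Z)/\langle S_{\OudRB}(Z)\cup I_A\rangle_{\mtuOpAlg}$. Then, having already secured the GS property of $S_{\OudRB}(Z)\cup G$, the implication (a)$\Rightarrow$(b) of Theorem~\ref{Thm: unital CD} yields a $\bk$-basis of this quotient, namely the set $\Irr(S_{\OudRB}(Z)\cup G) = \frakM(Z)\setminus\{q|_{\bar s}\mid s\in S_{\OudRB}(Z)\cup G,\ q\in\frakMstar(Z)\}$.

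The one bookkeeping step left is to check that this intrinsic description matches the explicit one in the statement. For this I would use the leading-monomial computation already performed (in the analogue of the nonunital case, Subsection~\ref{Subsection: case zero}, extended to $\frakM(Z)$ via Proposition~\ref{uDl}): for all $u,v\in\frakM(Z)$, the leading monomials of $\phi_1^0(u,v)$, $\phi_2^0(u,v)$, $\phi_3^0(u)$, $\phi_4^0(u,v)$ under $\leq_{\operatorname{uPD}}$ are respectively $P(u)P(v)$, $D(u)v$, $D(P(u))$, $P(u)D(v)$, and observe that the cases $u=1$ or $v=1$ produce no new leading terms (as already noted in the remark preceding Theorem~\ref{uS2}, since $\phi_2^0(1,1)=D(1)$ is subsumed by the reduction $D(u)v$ when $u=1,v=1$, and similarly for $\phi_4^0$). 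Consequently the set $\{\bar s\mid s\in S_{\OudRB}(\frakM(Z))\}$ coincides with $\{P(u)P(v),\ D(u)v,\ D(P(u)),\ P(u)D(v)\mid u,v\in\frakM(Z)\}$, and substituting into the definition of $\Irr$ yields precisely the complement described in the theorem.

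There is no genuine obstacle here: the real content has been concentrated in Theorem~\ref{uS2} (the verification that $\OudRB$ is $\Omega$-GS) and in the general transfer theorem Theorem~\ref{Thm: GS basis for free unital Phi algebra over unital alg}; once those are in hand, the present statement is merely a translation via the Composition-Diamond Lemma. The mildly delicate point, which I would state carefully but not belabor, is ensuring the match of leading monomials on $\frakM(Z)$ (rather than $\frakS(Z)$), for which Proposition~\ref{uDl} is the pertinent tool.
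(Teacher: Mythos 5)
Your proposal coincides with the paper's own (essentially omitted) argument: the paper obtains this theorem exactly as you do, by combining Proposition~\ref{free phi algebra}(b), the preceding theorem that $S_{\OudRB}(Z)\cup G$ is an $\Omega$-GS basis with respect to $\leq_{\operatorname{uPD}}$, and the unital Composition-Diamond Lemma (Theorem~\ref{Thm: unital CD}), with the degenerate substitutions $u=1$ or $v=1$ dispatched via Proposition~\ref{uDl} and the remark preceding Theorem~\ref{uS2}. So the proof is correct and takes essentially the same route as the paper.
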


So far, we have completed the study of differential Rota-Baxter algebras.

\section{Free integro-differential algebras over algebras }\label{Section: integro-differential algebras}

In this section, we carry the study of GS bases of free integro-differential algebras over algebras.
It reveals that integro-differential  algebras can be investigated by using a   method similar to differential Rota-Baxter algebras, but the details are  more difficult.


We first recall the definition of integro-differential algebras.

\begin{defn} Let $\lambda\in \bfk$.
 An integro-differential $\mathbf{k}$-algebra of weight $\lambda$ (also called a $\lambda$-integro-differential $\mathbf{k}$-algebra) is a differential $\mathbf{k}$-algebra $(R, d)$ of weight $\lambda$ with a linear operator $P: R \rightarrow$ $R$ which satisfies (c) in Definition~\ref{differentila rota-baxter algebras}:
 $$
D \circ P=\text { id },
$$
 and such that
$$
\begin{array}{ll}
P(D(u) P(v))=u P(v)-P(u v)-\lambda P(D(u) v) & \text { for all } u, v \in R, \\
P(P(u) D(v))=P(u) v-P(u v)-\lambda P(u D(v)) & \text { for all } u, v \in R.
\end{array}
$$
\end{defn}

Recall that
\begin{itemize}
     \item[(1)]  $\phi_1(x,y) = P(x) P(y)-P(x P(y))-P(P(x) y)-\lambda P(x y)$,
   \item[(2)]  $\phi_2(x,y)= D(x)D(y) + \lambda^{-1}D(x)y + \lambda^{-1}xD(y) - \lambda^{-1}D(xy) $,
    \item[(3)]  $\phi_3(x) = D(P(x))-x$,
\item [(4)] $\phi_4(x,y)=P(x)D(y)-D(P(x)y)+xy+\lambda xD(y)$,

		\item [(5)] $\phi_5(x,y)=D(x)P(y)-D(xP(y))+xy+\lambda D(x)y$,

\end{itemize}
and
denote
\begin{itemize}
     \item[(6)] $\phi_6(x,y) = P(D(x) P(y)) - x P(y)+P(x y)+\lambda P(D(x) y)$,
	  \item[(7)] $\phi_7(x,y) = P(P(x) D(y))-P(x) y+P(x y)+\lambda P(x D(y))$.
\end{itemize}

Notice that for  $u, v\in\frakS(Z)$,
since $P(D(u) P(v))$ (resp. $P(P(u) D(v))$) has the largest $P$-degree in $\phi_6(u,v)$ (resp. $\phi_7(u,v)$),  the leading monomial of $\phi_6(u,v)$ (resp. $\phi_7(u,v)$) with respect to $\leq_{\operatorname{PD}}$ is $P(D(u) P(v))$ (resp. $P(P(u) D(v))$).


\subsection{ Case of nonunital algebras with $\lambda\neq 0$}\

Assume in this subsection that $\lambda\neq 0$.
We first consider   nonunital free integro-differential $\bk$-algebras over  algebras.

According to the definition of integro-differential  algebras, define
$$
\inte' :=\left\lbrace ~  \phi_2(x, y) , \phi_3(x),  \phi_6(x, y) , \phi_7(x, y)~ \right\rbrace.
$$
By Example~\ref{phi_4}, Example~\ref{phi_5}, Example~\ref{phi8} and Example~\ref{phi9},   $\inte'$ is not    $\Omega$-GS in $\bk\frakS(X)$ with respect to $\leq_{\operatorname{PD}}$.

\begin{exam}\label{phi8}  For $u, v\in \frakS(Z)$, let
 $$\begin{array}{rcl}  f&=&\phi_7(u,v)=P(P(u) D(v))-P(u) v+P(u v)+\lambda P(u D(v)),\\\
   g &=& \phi_4(u,v)=P(u)D(v)-D(P(u)v)+uv+\lambda uD(v),\\
   q&=&P(\star),\\
   p&=&P(P(u) D(v))=\bar{f}=\left.q\right|_{\bar{g}}. \end{array}$$
Then $$
(f,g)_p^q=f-\left.q\right|_{g}\equiv -P(D(P(u)v))+P(u)v.
$$
Let$$
\phi_8(x,y)=P(D(P(x)y))-P(x)y.$$
 It is clear  that the leading monomial  of $\phi_8(u, v)$  is   $P(D(P(u)v))$ with respect to $\leq_{\operatorname{PD}}$ which cannot be reduced further.
\end{exam}

\begin{exam}\label{phi9}  For $u, v\in \frakS(Z)$, let
 $$\begin{array}{rcl}  f&=&\phi_6(u,v)=P(D(u) P(v)) - uP(v)+P(u v)+\lambda P(D(u) v),\\\
   g &=& \phi_5(u,v)=D(u)P(v)-D(uP(v))+uv+\lambda D(u)v,\\
   q&=&P(\star),\\
   p&=&P(D(u)P(v))=\bar{f}=\left.q\right|_{\bar{g}}. \end{array}$$
Then $$
(f,g)_p^q=f-\left.q\right|_{g}\equiv -P(D(uP(v)))+uP(v).
$$
Let$$
\phi_9(x,y)=P(D(xP(y)))-xP(y).$$
 It is clear  that the leading monomial  of $\phi_9(u, v)$ is   $P(D(uP(v)))$ with respect to $\leq_{\operatorname{PD}}$ which cannot be reduced further.
\end{exam}

\begin{remark}\label{phi1}
	Note that the OPI $\phi_1(x,y)$ can be induced by $\phi_3(x,y)$ and $\phi_6(x,y)$. So an integro-differential algebra can be seen as a differential Rota-Baxter algebra.
	Explicitly, for $u, v\in \frakS(Z)$, let
	$$\begin{array}{rcl}  f&=&\phi_6(P(u),v)=P(D(P(u)) P(v)) - P(u)P(v)+P(P(u) v)+\lambda P(D(P(u)) v),\\\
		g &=& \phi_3(u)=D(P(u))-u,\\
		q&=&P(\star P(v)),\\
		p&=&P(D(P(u)) P(v))=\bar{f}=\left.q\right|_{\bar{g}}. \end{array}$$
	Then $$
	(f,g)_p^q=f-\left.q\right|_{g}\equiv P(u)P(v) - P(uP(v)) - P(P(u)v) -  \lambda P(uv)=\phi_1(u,v).
	$$

\end{remark}

\bigskip

Now denote  $\inte$ to be the set of the following OPIs:
 	\begin{itemize}
		\item [(1)] $\phi_1(x, y) =P(x)P(y) - P(xP(y)) - P(P(x)y) -  \lambda P(xv)$,

		\item [(2)] $ \phi_2(x, y)=D(x)D(y) + \lambda^{-1}D(x)y + \lambda^{-1}xD(y) - \lambda^{-1}D(xy) $,

		\item [(3)] $\phi_3(x)=D(P(x)) - x$,

		\item [(4)] $\phi_4(x,y)=P(x)D(y)-D(P(x)y)+xy+\lambda xD(y)$,

		\item [(5)] $\phi_5(x,y)=D(x)P(y)-D(xP(y))+xy+\lambda D(x)y$,

		\item [(8)] $\phi_8(x,y)=P(D(P(x)y))-P(x)y$,

		\item [(9)] $\phi_9(x,y)=P(D(xP(y)))-xP(y)$.
 	\end{itemize}

Notice that  $\inte= \dRB\cup  \{\phi_8(x,y),  \phi_9(x,y)\}$.

\begin{prop}\label{s3s3'}
$\left\langle S_{\inte'}(Z)\right\rangle_\mtOpAlg=\left\langle S_{\inte}(Z)\right\rangle_\mtOpAlg$ for each set $Z$.
\end{prop}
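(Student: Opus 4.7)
The plan is to prove the equality of $\Omega$-ideals by establishing both inclusions, reusing the calculations already performed in the preceding Examples~\ref{phi_4}, \ref{phi_5}, \ref{phi8}, \ref{phi9} and Remark~\ref{phi1}. Since $\phi_2,\phi_3\in \inte\cap \inte'$, the only generators that require work are $\phi_6,\phi_7$ on one side, and $\phi_1,\phi_4,\phi_5,\phi_8,\phi_9$ on the other.

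For the inclusion $\left\langle S_{\inte}(Z)\right\rangle_{\mtOpAlg}\subseteq \left\langle S_{\inte'}(Z)\right\rangle_{\mtOpAlg}$, I would argue element-by-element. Examples~\ref{phi_4} and \ref{phi_5} already give explicit expressions
\[
\phi_4(u,v)\equiv 0\quad \text{and}\quad \phi_5(u,v)\equiv 0 \pmod{\langle S_{\{\phi_2,\phi_3\}}(Z)\rangle_{\mtOpAlg}}
\]
for every $u,v\in\frakS(Z)$, by writing them as linear combinations of inclusion compositions of $\phi_2$ and $\phi_3$ with elements of $\frakS(Z)$. Similarly, Examples~\ref{phi8} and \ref{phi9} give $\phi_8(u,v)$ and $\phi_9(u,v)$ as combinations of elements of the $\Omega$-ideal generated by $\phi_4,\phi_7$ (respectively $\phi_5,\phi_6$), and combining with what has just been shown puts $\phi_8,\phi_9$ in $\langle S_{\inte'}(Z)\rangle_{\mtOpAlg}$. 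Finally, Remark~\ref{phi1} displays $\phi_1(u,v)$ as an inclusion composition of $\phi_6$ and $\phi_3$, so $\phi_1\in \langle S_{\inte'}(Z)\rangle_{\mtOpAlg}$ as well.

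For the reverse inclusion $\left\langle S_{\inte'}(Z)\right\rangle_{\mtOpAlg}\subseteq \left\langle S_{\inte}(Z)\right\rangle_{\mtOpAlg}$, I need to show $\phi_6,\phi_7\in \langle S_{\inte}(Z)\rangle_{\mtOpAlg}$. The idea is to reverse Examples~\ref{phi8} and \ref{phi9}. Applying $\phi_5$ in the form $D(u)P(v)\equiv D(uP(v))-uv-\lambda D(u)v$ inside the outermost $P$ of $P(D(u)P(v))$, and then invoking $\phi_9$ to rewrite $P(D(uP(v)))\equiv uP(v)$, one obtains
\[
P(D(u)P(v))\equiv uP(v)-P(uv)-\lambda P(D(u)v)\pmod{\langle S_{\inte}(Z)\rangle_{\mtOpAlg}},
\]
which is exactly $\phi_6(u,v)\equiv 0$. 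The treatment of $\phi_7$ is symmetric: use $\phi_4$ to rewrite $P(u)D(v)$, then apply $\phi_8$ to handle the resulting $P(D(P(u)v))$.

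The steps are essentially bookkeeping exercises; no step is a conceptual obstacle, since all relevant equivalences have been produced in the preceding examples. The only care required is to observe that whenever an example writes ``$\equiv$,'' the difference is an explicit $\bk$-linear combination of elements of the form $q|_{\psi(u_1,\dots,u_k)}$ with $\psi$ ranging over the appropriate $\inte$ or $\inte'$, so that the conclusion $\phi_i\in \langle S_{(\cdot)}(Z)\rangle_{\mtOpAlg}$ is genuine and not only ``up to lower-order monomials.'' Once this is spelled out, the two inclusions combine to give the asserted equality.
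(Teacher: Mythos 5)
Your proposal is correct and follows essentially the same route as the paper: both directions rest on the cited Examples~\ref{phi_4}, \ref{phi_5}, \ref{phi8}, \ref{phi9} and Remark~\ref{phi1}, and your rewriting of $P(D(u)P(v))$ via $\phi_5$ and $\phi_9$ (resp.\ of $P(P(u)D(v))$ via $\phi_4$ and $\phi_8$) is exactly the paper's identities $P(\phi_5(u,v))=\phi_6(u,v)-\phi_9(u,v)$ and $P(\phi_4(u,v))=\phi_7(u,v)-\phi_8(u,v)$. Your remark that the ``$\equiv$'' in the examples must be read as genuine membership in the operated ideal, not merely equality up to lower monomials, is the right point of care and matches what the paper implicitly uses.
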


\begin{proof}
We firstly prove $\left\langle S_{\inte}(Z)\right\rangle_\mtOpAlg  \subseteq  \left\langle S_{\inte'}(Z)\right\rangle_\mtOpAlg$,  which follows from
$$
\left\{\begin{array}{lll}
\phi_1(u, v) \in \left\langle \phi_3(u, v), \phi_6(u, v)\right\rangle_\mtOpAlg \   \mathrm{by\  Remark}~\ref{phi1},\\
\phi_4(u, v)  \in \left\langle \phi_2(u, v) , \phi_3(u)\right\rangle_\mtOpAlg\    \mathrm{by \  Example}~\ref{phi_4},\\
\phi_5(u, v)  \in \left\langle \phi_2(u, v) , \phi_3(u)\right\rangle_\mtOpAlg \  \mathrm{by \  Example}~\ref{phi_5},\\
\phi_8(u, v)  \in \left\langle \phi_4(u, v) , \phi_7(u, v) \right\rangle_\mtOpAlg\   \mathrm{by \  Example}~\ref{phi8},  \\
 \phi_9(u, v)  \in \left\langle \phi_5(u, v) , \phi_6(u, v) \right\rangle_\mtOpAlg\   \mathrm{by \  Example}~\ref{phi9},
\end{array}\right.
$$
where $ u,v\in \frakS(Z)$.

Next we show $  \left\langle S_{\inte'}(Z)\right\rangle_\mtOpAlg   \subseteq \left\langle S_{\inte}(Z)\right\rangle_\mtOpAlg $. Note that
$$
\begin{aligned}
P(\phi_4(u,v)) &=P(P(u)D(v))-P(D(P(u)v))+P(uv)+\lambda P(uD(v))\\
& =  P(P(u)D(v))-P(u)v+P(uv)+\lambda uD(v)-P(D(P(u)v))+P(u)v\\
& = \phi_7(u,v)-\phi_8(u,v),
\end{aligned}
$$
and
$$
\begin{aligned}
P(\phi_5(u,v)) &=P(D(u)P(v))-P(D(uP(v)))+P(uv)+\lambda P(D(u)v)\\
& =  P(D(u)P(v))-uP(v)+P(uv)+\lambda D(u)v-P(D(uP(v)))+uP(v)\\
& = \phi_6(u,v)-\phi_9(u,v).
\end{aligned}
$$
So we have
$$
\left\{\begin{array}{lll}
\phi_6(u,v) \in \left\langle \phi_5(u,v), \phi_9(u,v)\right\rangle_\mtOpAlg,\\
\phi_7(u,v) \in \left\langle \phi_4(u,v), \phi_8(u,v)\right\rangle_\mtOpAlg.
\end{array}\right.
$$
It proves $  \left\langle S_{\inte'}(Z)\right\rangle_\mtOpAlg   \subseteq \left\langle S_{\inte}(Z)\right\rangle_\mtOpAlg $.

We are done.
\end{proof}

Now we can prove  $\inte$ is  $\Omega$-GS.

\begin{thm}\label{S3,S4}
$\inte$ is   $\Omega$-GS  in $\bk\frakS(Z)$ with respect to $\leq_{\operatorname{PD}} $.
\end{thm}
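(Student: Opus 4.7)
Since $\inte=\dRB\cup\{\phi_8,\phi_9\}$ and $\dRB$ is already an $\Omega$-GS basis by Theorem~\ref{S2}, my strategy is to recycle as much of that result as possible and verify only the ambiguities that genuinely involve $\phi_8$ or $\phi_9$. Every composition built purely from $\phi_1,\dots,\phi_5$ is already trivial modulo $(S_{\dRB}(Z),p)\subseteq(S_\inte(Z),p)$, so the task reduces to the pairs $i\wedge j$ with $\{i,j\}\cap\{8,9\}\neq\emptyset$.

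A single structural observation eliminates intersection ambiguities wholesale: because $\bar\phi_3$, $\bar\phi_8$ and $\bar\phi_9$ all have breadth $1$, the inequality $\max\{|\bar f|,|\bar g|\}<|p|<|\bar f|+|\bar g|$ in the definition of an intersection composition is never satisfied when one of these OPIs participates. Hence only inclusion ambiguities remain. Following the bookkeeping used in the proof of Theorem~\ref{S2}, I enumerate for each relevant pair all placements of $\bar\phi_j$ inside $\bar\phi_i$; those for which $\bar\phi_j$ lies strictly inside one of the substituted arguments of $\phi_i$ are complete inclusions and are discharged by Lemma~\ref{including}.

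What is left is a short list of non-complete inclusions in which $\bar\phi_j$ crosses the outer operator skeleton of $\phi_i$: namely $\phi_8\wedge\phi_1$ (second argument of $\phi_8$ a $P$-image), $\phi_8\wedge\phi_4$ (second argument a $D$-image), $\phi_9\wedge\phi_1$ (first argument a $P$-image), $\phi_9\wedge\phi_5$ (first argument a $D$-image), together with the self-ambiguities $\phi_8\wedge\phi_9$ and $\phi_9\wedge\phi_8$, which collapse because $\phi_8(u,P(v))=\phi_9(P(u),v)$ as elements of $\bk\frakS(Z)$. For a representative case such as $\phi_8\wedge\phi_1$ (taking $f=\phi_8(u,P(v'))$, $g=\phi_1(u,v')$, $q=P(D(\star))$, $p=P(D(P(u)P(v')))$), the key manipulation is to replace each $P(D(P(w)))$ that appears in $q|_g$ by $P(w)+P(\phi_3(w))$; a direct computation then gives
\[
(f,g)_p^q \equiv -\phi_1(u,v')+P(\phi_3(uP(v')))+P(\phi_3(P(u)v'))+\lambda P(\phi_3(uv')) \bmod (S_\inte(Z),p),
\]
and each summand on the right is easily checked to have wrapped leading strictly below $p$ in $\leq_{\operatorname{PD}}$: the three $P(\phi_3(\,\cdot\,))$-terms by a strict drop in $\deg_{GP}$ (the inner $P$-bracket contains all of the remaining $Z$-letters, whereas $p$ contains two smaller $P$-brackets) and $\phi_1(u,v')$ by a strict drop in $\deg_D$.

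I expect the technically hardest case to be $\phi_9\wedge\phi_5$ (mirrored by $\phi_8\wedge\phi_4$), where after the usual cancellation of leading terms the composition yields a polynomial whose largest summand in $\leq_{\operatorname{PD}}$ is $P(D(D(u'P(v))))$, a monomial strictly below $p=P(D(D(u')P(v)))$ only through the $\leq_{\operatorname{GD}}$-layer of $\leq_{\operatorname{PD}}$. Its elimination demands a careful combination of $\phi_5(u',v)$ with the trivial wrapper $\star$, together with $\phi_2(u',P(v))$ coupled to $\phi_3(v)$ applied via a $P(\star)$- or $D(\star)$-wrapper; at each step the wrapped leading must be verified to lie strictly below $p$ through the full tie-breaking cascade $\deg_D,\deg_P,\deg_Z,\deg_{GD},\deg_{GP},\leq_{\operatorname{Dlex}}$ built into $\leq_{\operatorname{PD}}$ in Section~\ref{section: order}. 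This is precisely where the refined monomial order of the present paper, as opposed to the coarser order used in \cite{BokutChenQiu}, becomes indispensable.
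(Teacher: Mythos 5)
Your overall architecture coincides with the paper's: ambiguities among $\phi_1,\dots,\phi_5$ are inherited from Theorem~\ref{S2}, complete inclusions are discharged by Lemma~\ref{including}, your remark that the breadth-one leading monomials of $\phi_3,\phi_8,\phi_9$ rule out intersection compositions is correct, and your computation for $8\wedge 1$ (with $s$ empty) is essentially the one the paper writes out. The first genuine gap is your enumeration of what remains: you only list crossings in which $\phi_8$ or $\phi_9$ is the ambient identity. But $\bar\phi_8(u,v)=P(D(P(u)v))$ and $\bar\phi_9(u,v)=P(D(uP(v)))$ can themselves coincide with an inner $P$-factor of the leading monomial of $\phi_1,\phi_3,\phi_4,\phi_5,\phi_8,\phi_9$, and such an occurrence is not inside a substituted argument, so by your own criterion Lemma~\ref{including} does not apply. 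These are the paper's cases $1\wedge 8$ (at $P(D(P(u)v))P(w)$ and $P(u)P(D(P(v)w))$), $3\wedge 8$ (at $D(P(D(P(u)v)))$), $4\wedge 8$, $5\wedge 8$, their analogues for $\phi_9$, and the nested cases $8\wedge 8$, $8\wedge 9$, $9\wedge 8$, $9\wedge 9$ at monomials such as $P(D(P(D(P(u)v))w))$ and $P(D(uP(D(vP(w)))))$; none of them appears in your list. Your identity $\phi_8(u,P(v))=\phi_9(P(u),v)$ is true, but it only disposes of the coincidence of two instances sharing the same leading monomial; it says nothing about these nested crossings, which must be (and in the paper are) checked.

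The second gap is the case you yourself single out as hardest, $p=P(D(D(u')P(v)))$ (the paper's $9\wedge 5$ with $s=\emptyset$), where the composition equals $-D(u')P(v)+P(D(D(u'P(v))))-P(D(u'v))-\lambda P(D(D(u')v))$. Triviality modulo $(S_{\inte}(Z),p)$ requires every term of the certificate to be of the form $q_i|_{s_i}$ with $q_i|_{\bar s_i}<_{\operatorname{PD}}p$, and the steps you sketch violate this: wrapping $\phi_2(u',P(v))$ in $D(\star)$ has wrapped leading $D(D(u')D(P(v)))$ with $\deg_D=3>2=\deg_D(p)$, wrapping it in $P(\star)$ gives $P(D(u')D(P(v)))$, which ties with $p$ on all the degree layers and exceeds it at the $\leq_{\operatorname{Dlex}}$ layer, and the wrappings through $\phi_3$ or $\phi_9$ that reach the offending monomial likewise raise $\deg_D$. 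Note that already for $u',v\in Z$ the monomial $P(D(D(u'P(v))))$ contains no $\bar s$ with $s\in S_{\inte}(Z)$ as a subword, so it can enter an admissible certificate only through a non-leading term of a wrapped relation whose wrapped leading lies strictly below $p$; you have not exhibited such a term, so your write-up does not establish this case (nor its mirror $8\wedge 4$), which is exactly the kind of crossing composition the proof of Theorem~\ref{S3,S4} requires to be verified explicitly rather than asserted.
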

\begin{proof}
  Since the     ambiguities  $i\wedge j$ with $i, j = 1, 2,3,4,5$ in $\inte$ are the same as in Theorem~\ref{S2}, we only need to consider the  ambiguities  involving  $\phi_8$ and $\phi_9$. The cases that cannot be dealt with directly by Lemma~\ref{including} are listed below: for arbitrary $u,v,w \in \frakS(Z), q\in\frakS^{\star}(Z)$ and $s \in \frakS(Z) \text{ or } \emptyset$,
 	\begin{itemize}
		\item [$1 \wedge 8$] \quad $P\left(D\left(P\left(u\right)v\right)\right)P\left(w\right)$, \quad $P\left(u\right)P\left(D\left(P\left(v\right)w\right)\right)$,

		\item [$3 \wedge 8$] \quad $D\left(P\left(D\left(P\left(u\right)v\right)\right)\right)$,

		\item [$4 \wedge 8$] \quad $P\big(D\left(P (u)v\right)\big)D\left(w\right)$,

		\item [$5 \wedge 8$] \quad $D\left(u\right)P\left(D\left(P\left(v\right)w\right)\right)$,

		\item [$1 \wedge 9$] \quad $P\big(D\left(uP(v)\right)\big)P\left(w\right)$, \quad $P\left(u\right)P\big(D\left(vP(w)\right)\big)$,

		\item [$3 \wedge 9$] \quad $D\Big(P\big(D\left(uP(v)\right)\big)\Big)$,

		\item [$4 \wedge 9$] \quad $P\big(D\left(uP(v)\right)\big)D\left(w\right)$,

		\item [$5 \wedge 9$] \quad $D\left(u\right)P\left(D\left(vP\left(w\right)\right)\right)$,

		\item [$8 \wedge 1$] \quad $P\big(D\left(P(u)P(v)s\right)\big)$,

		\item [$8 \wedge 4$] \quad $P\big(D\left(P(u)D(v)s\right)\big)$,

		\item [$9 \wedge 1$] \quad $P\big(D\left(sP(u)P(v)\right)\big)$,

		\item [$9 \wedge 5$] \quad $P\big(D\left(sD(u)P(v)\right)\big)$,

		\item [$8 \wedge 8$] \quad $P\left(D\Big(P\big(D\left(P(u)v\right)\big)w\Big)\right)$,

		\item [$8 \wedge 9$] \quad $P\left(D\Big(P\big(D\left(uP(v)\right)\big)w\Big)\right)$,

		\item [$9 \wedge 8$] \quad $P\left(D\Big(uP\big(D\left(P(v)w\right)\big)\Big)\right)$,

		\item [$9 \wedge 9$] \quad $P\left(D\Big(uP\big(D\left(vP(w)\big)\right)\Big)\right)$.
 	\end{itemize}
All these  compositions can be treated similarly as in the proof of  Theorem~\ref{S2}. We only give the complete proof for the case $8 \wedge 1$.
Take
 $f = \phi_8(u,P(v)s)$, $g= \phi_1(u,v)$, $p= P(D(P(u)P(v)s))$ and $q=P(D(\star s))$. Then we have
$$
\begin{aligned}
(f,g)_p^q &=-P(u)P(v)s+P(D(P(uP(v))s))+P(D(P(P(u)v)s))+\lambda P(D(P(uv)s))\\
& \equiv -P(uP(v))s-P(P(u)v)s-\lambda P(uv)s+P(uP(v))s+ P(P(u)v)s+\lambda P(uv)s\\
& \equiv 0 \bmod \left(S_{\inte}(Z), p\right).
\end{aligned}
$$
We are done.
\end{proof}

 \begin{thm}
 	Let $Z$ be a set and $A=\bk \cals(Z)\slash I_A$ a nonunital $\bk$-algebra.  Then we have:
	$$\calf^{\inte\zhx\Alg}_{\Alg}(A)=\bk\frakS(Z)\slash\left\langle S_{\inte}(Z)\cup I_A\right\rangle_\mtOpAlg.$$
	Moreover, assume $I_A$  has a GS  basis $G$ with respect to the degree-lexicographical order $\leq_{\rm {dlex}}$. Then $S_{\inte}(Z)\cup G$ is an operated  GS  basis of $\left\langle S_{\inte}(Z)\cup I_A\right\rangle_\mtOpAlg$ in $\bk\frakS(Z) $  with respect to $\leq_{\operatorname{PD}}$.

 \end{thm}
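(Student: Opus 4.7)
The plan is to proceed exactly in parallel with the analogous theorems in Subsections~\ref{Subsection: Case of lambda non zero}-\ref{Subsection: case unital   algebras} for differential Rota-Baxter algebras, since the required machinery is now in place for the integro-differential case.

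For the first statement, I would apply Proposition~\ref{free phi algebra}(a) to the set $\inte'$ of OPIs defining integro-differential algebras, which gives
$$\calf^{\inte\zhx\Alg}_{\Alg}(A)=\calf^{\inte'\zhx\Alg}_{\Alg}(A)=\bk\frakS(Z)\slash\left\langle S_{\inte'}(\frakS(Z))\cup I_A\right\rangle_\mtOpAlg.$$
The point is then that Proposition~\ref{s3s3'} gives the equality of $\Omega$-ideals $\left\langle S_{\inte'}(Z)\right\rangle_\mtOpAlg=\left\langle S_{\inte}(Z)\right\rangle_\mtOpAlg$ in $\bk\frakS(Z)$, so one may replace $\inte'$ by the enlarged set $\inte$ without changing the quotient. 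This yields the asserted description of the free $\inte$-algebra on $A$.

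For the second statement, the plan is a direct invocation of Theorem~\ref{Thm: GS basis for free nonunital Phi algebra over nonunital alg}. The first hypothesis (that $\inte$ is $\Omega$-GS on $\bk\frakS(Z)$ with respect to $\leq_{\operatorname{PD}}$) is precisely Theorem~\ref{S3,S4}, and the second (that $G$ is a GS basis of $I_A$ with respect to the restriction of $\leq_{\operatorname{PD}}$ to $\cals(Z)$) holds since that restriction coincides with $\leq_{\rm{dlex}}$ by construction. What remains to verify is the pair of technical conditions on $\inte$: (i) the leading monomial of each $\phi\in\inte$ contains no subword from $\cals(X)\setminus X$, and (ii) for all $u_1,\dots,u_n\in\frakS(Z)$, the evaluation $\phi(u_1,\dots,u_n)$ either vanishes or has leading monomial $\bar\phi(u_1,\dots,u_n)$.

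Condition (i) is a direct inspection: the leading monomials $P(x)P(y)$, $D(x)D(y)$, $D(P(x))$, $P(x)D(y)$, $D(x)P(y)$ of $\phi_1,\dots,\phi_5$ clearly contain no plain word of length $\geq 2$ from $X$, and in the leading monomials $P(D(P(x)y))$ of $\phi_8$ and $P(D(xP(y)))$ of $\phi_9$ the two variables $x$ and $y$ are always separated by a half bracket, so no subword $xx,xy,yx,yy$ appears. Condition (ii) for $\phi_1,\dots,\phi_5$ has already been established in Subsection~\ref{Subsection: Case of lambda non zero}, and for $\phi_8$ and $\phi_9$ it is immediate from the definition of $\leq_{\operatorname{PD}}$: in $\phi_8(u,v)=P(D(P(u)v))-P(u)v$ the two monomials differ in $\deg_D$ (and also in $\deg_P$), so the first one is strictly larger under $\leq_{\operatorname{PD}}$, and similarly for $\phi_9(u,v)$. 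Putting these checks together with Theorem~\ref{Thm: GS basis for free nonunital Phi algebra over nonunital alg} concludes the proof. I do not expect any serious obstacle here since the heavy lifting is done by Theorem~\ref{S3,S4}; the only care needed is the verification that the leading-monomial hypotheses of our general machinery are indeed met by every OPI in $\inte$, including the newly introduced $\phi_8$ and $\phi_9$.
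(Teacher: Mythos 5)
Your proposal is correct and follows essentially the same route as the paper: the free-algebra description comes from Proposition~\ref{free phi algebra} together with Proposition~\ref{s3s3'}, and the GS-basis statement is a direct application of Theorem~\ref{Thm: GS basis for free nonunital Phi algebra over nonunital alg} combined with Theorem~\ref{S3,S4}, with the leading-monomial hypotheses checked exactly as you do (the paper merely states these checks more tersely).
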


 \begin{proof}
 Since the leading monomial in  $\inte$ has no subword in $\cals(X)\backslash X$, the  result follows immediately from   Theorem~\ref{S3,S4} and Theorem~\ref{Thm: GS basis for free nonunital Phi algebra over nonunital alg}.
 \end{proof}

As a consequence, we obtain a linear basis .
\begin{thm}
	Let $Z$ be a set and $A=\bk \cals(Z)\slash I_A$ a nonunital $\bk$-algebra with a GS  basis $G$ with respect to $\leq_{\rm{dlex}}$.  Then a linear basis of the free nonunital $\lambda$-integro-differential algebra   $\calf^{\dRB\zhx\Alg}_{\Alg}(A)$ over $A$ is given by  the set
 $\Irr(S_{\inte}(Z)\cup G)$, which is by definition the complement in $\frakS(Z)$ of the subset consisting of  $  q|_w$ where  $w$ runs through
 $$  \bar{s},P(u)P(v),D(u)D(v),D(P(u)),P(u)D(v), D(u)P(v),P(D(P(u)v)),P(D(uP(v)))$$
 for arbitrary $ s\in G,q\in\frakSstar(Z),u,v\in\frakS(Z).$

\end{thm}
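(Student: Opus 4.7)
The plan is to deduce this statement as a direct corollary of the preceding theorem combined with the Composition--Diamond Lemma in the nonunital multi-operated setting. Concretely, the preceding theorem asserts that $S_{\inte}(Z)\cup G$ is an $\Omega$-GS basis of the $\Omega$-ideal $\left\langle S_{\inte}(Z)\cup I_A\right\rangle_{\mtOpAlg}$ in $\bk\frakS(Z)$ with respect to the monomial order $\leq_{\operatorname{PD}}$. Since $\calf^{\inte\zhx\Alg}_{\Alg}(A)=\bk\frakS(Z)\slash\left\langle S_{\inte}(Z)\cup I_A\right\rangle_{\mtOpAlg}$ by Proposition~\ref{free phi algebra}, an application of Theorem~\ref{Thm: nonunital CD} (the nonunital Composition--Diamond Lemma) immediately yields that $\Irr(S_{\inte}(Z)\cup G)$ is a $\bk$-basis of the quotient.

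The only content left is to match the definition of $\Irr(\calg)$ for $\calg = S_{\inte}(Z)\cup G$ with the explicit description given in the statement. I would argue as follows: by definition,
$$
\Irr(S_{\inte}(Z)\cup G)=\frakS(Z)\setminus\bigl\{ q|_{\overline{s}}\ \bigm|\ s\in S_{\inte}(Z)\cup G,\ q\in\frakSstar(Z)\bigr\}.
$$
For $s\in G$, the leading monomial is $\bar{s}$, contributing the patterns $q|_{\bar{s}}$. For $s\in S_{\inte}(Z)$, one observes that elements of $S_\Phi(\frakS(Z))$ are obtained by substituting arbitrary $u,v\in\frakS(Z)$ into the OPIs $\phi_1,\phi_2,\phi_3,\phi_4,\phi_5,\phi_8,\phi_9$ listed in $\inte$. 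Using the fact already established (either explicitly in the text or in the proposition at the start of Subsection~\ref{Subsection: Case of lambda non zero}) that the leading monomials of $\phi_i(u,v)$ under $\leq_{\operatorname{PD}}$ are respectively $P(u)P(v)$, $D(u)D(v)$, $D(P(u))$, $P(u)D(v)$, $D(u)P(v)$, $P(D(P(u)v))$, $P(D(uP(v)))$, one sees that these are precisely the forms $w$ enumerated in the statement of the theorem.

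There is really no obstacle here beyond bookkeeping: the hard work has been invested in Theorem~\ref{S3,S4} ($\Omega$-GS property of $\inte$) and in the preceding theorem (lifting to $S_{\inte}(Z)\cup G$ via Theorem~\ref{Thm: GS basis for free nonunital Phi algebra over nonunital alg}). The main (minor) point to verify is just that the leading monomials of $\phi_6$ and $\phi_7$ do not need to be included in the list: these OPIs have been replaced in $\inte$ by $\phi_8$ and $\phi_9$, which is consistent with Proposition~\ref{s3s3'} ensuring $\left\langle S_{\inte'}(Z)\right\rangle_{\mtOpAlg}=\left\langle S_{\inte}(Z)\right\rangle_{\mtOpAlg}$. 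With this identification settled, the theorem follows from Theorem~\ref{Thm: nonunital CD} in one line.
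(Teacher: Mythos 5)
Your proposal is correct and follows essentially the same route as the paper: the paper's proof is precisely an application of the nonunital Composition--Diamond Lemma (Theorem~\ref{Thm: nonunital CD}) to the preceding theorem stating that $S_{\inte}(Z)\cup G$ is an $\Omega$-GS basis with respect to $\leq_{\operatorname{PD}}$. Your additional bookkeeping identifying the leading monomials of $\phi_1,\dots,\phi_5,\phi_8,\phi_9$ with the listed patterns is exactly the implicit content of the paper's one-line argument.
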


\begin{proof}
It can be induced directly from   Theorem~\ref{Thm: nonunital CD}.
\end{proof}

\begin{remark}Since the  monomial order  $\leq_{\operatorname{PD}}$ is different from that used in  \cite{GGR15}, our operated GS basis and linear basis are different from theirs. The reason is that   the monomial order in \cite{GGR15} does not satisfy the condition of Theorem~\ref{Thm: GS basis for free nonunital Phi algebra over nonunital alg}, thus cannot enable  us to discuss free integro-differential algebras over algebras.

\end{remark}

\begin{remark}
Define a new OPI $\phi_{10}(x)=P(D(x))$,
and let $$\Phi_{\mathsf{IID}}=\{~\phi_1(x,y),\phi_2(x,y),\phi_3(x),\phi_{10}(x)~\}.$$
A $\Phi_{\mathsf{IID}}$-algebra is just a nonunital $\lambda$-integro-differential algebra with the operators $P$ and $D$ being the inverse operator of each other, so we call such an operated algebra an invertible integro-differential algebra. One can show that
$\Phi_{\mathsf{IID}}\cup\{\phi_4(x,y),\phi_5(x,y)\}$ is  $\Omega$-GS  in $\bk\frakS(Z)$ with respect to $\leq_{\operatorname{PD}} $.

\end{remark}

\subsection{ Case of nonunital algebras with $\lambda=0$}\

Now we consider
nonunital free integro-differential algebras on algebras with $\lambda=0$.
 This case can be studied similarly as the case $\lambda\neq 0$, so we omit the details in this subsection.

As in Subsection~\ref{Subsection: case zero}, for a OPI $\phi$,  we denote $\phi^0$ for    $\phi$ with $\lambda=0$ and also write $\phi^0=\phi$ when  $\lambda$ does not appear in $\phi$ for convenience.
Let
$$
\Ointe' :=\left\lbrace ~ \phi_2^0(x, y) , \phi_3^0(x) , \phi_6^0(x, y), \phi_7^0(x, y) \right\rbrace.
$$
Again, $\Ointe'$  is not  $\Omega$-GS in $\bk\frakS(Z)$ with respect to $\leq_{\operatorname{PD}}$.

\begin{remark}
By  Example~\ref{phi8}, we can   get $\phi_8^0(u,v)$ from $\phi_4^0(u,v)$  and $\phi_7^0(u,v)$.

One can not obtain  $\phi_9^0(u,v)$ from $S_{\Ointe'}(Z)$ as in Example~\ref{phi9}, since $\phi_5$ does not belong to $\Ointe'$.
However, we can still generate $\phi_9^0(u,v)$ as follows:  for $u, v\in \frakS(Z)$, let
 $$\begin{array}{rcl}  f&=&\phi_6^0(u,v)=P(D(u)P(v))-uP(v)+P(u v),\\\
   g &=& \phi_2^0(u,P(v))=D(u)P(v)+uD(P(v))-D(uP(v)),\\
   q&=&P(\star),\\
   w&=&P(D(u)P(v))=\bar{f}=\left.q\right|_{\bar{g}}. \end{array}$$
Then $$
(f,g)_w=f-\left.q\right|_{g}\equiv P(D(uP(v)))-uP(v)=\phi_9^0(u,v).
$$

\end{remark}

Now denote  $\Ointe$ to be the set of the following OPIs:
 	\begin{itemize}
		\item [(1)] $\phi_1^0(x,y) =P(x)P(y) - P(xP(y)) - P(P(x)y)$,

		\item [(2)] $ \phi_2^0(x,y)=D(x)y + xD(y) - D(xy) $,

		\item [(3)] $\phi_3^0(x)=D(P(x)) - x$,

		\item [(4)] $\phi_4^0(x,y)=P(x)D(y)-D(P(x)y)+xy$,

		\item [(8)] $\phi_8^0(x,y)=P(D(P(x)y))-P(x)y$,

		\item [(9)] $\phi_9^0(x,y)=P(D(xP(y)))-xP(y)$.
 	\end{itemize}

 As in the previous subsection, one can prove the following results.
\begin{prop}
$\left\langle S_{\Ointe'}(Z)\right\rangle_\mtOpAlg=\left\langle S_{\Ointe}(Z)\right\rangle_\mtOpAlg$ for any set $Z$.
\end{prop}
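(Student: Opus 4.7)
The plan is to mirror the strategy of Proposition~\ref{s3s3'}, establishing the two inclusions $\langle S_{\Ointe}(Z)\rangle_\mtOpAlg\subseteq\langle S_{\Ointe'}(Z)\rangle_\mtOpAlg$ and $\langle S_{\Ointe'}(Z)\rangle_\mtOpAlg\subseteq\langle S_{\Ointe}(Z)\rangle_\mtOpAlg$ by showing that every generator on one side lies in the $\Omega$-ideal generated by the other. Since $\phi_2^0$ and $\phi_3^0$ occur in both families, the work reduces to deriving $\phi_1^0, \phi_4^0, \phi_8^0, \phi_9^0$ from $\Ointe'$ and $\phi_6^0, \phi_7^0$ from $\Ointe$. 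The key technical point compared to the $\lambda\neq 0$ case is that $\phi_5^0$ is not an independent generator, but rather a consequence of $\phi_2^0$ and $\phi_3^0$; indeed, for $u,v\in\frakS(Z)$ the identity $\phi_2^0(u,P(v))=\phi_5^0(u,v)+u\,\phi_3^0(v)$ shows $\phi_5^0(u,v)\in\langle\phi_2^0,\phi_3^0\rangle_\mtOpAlg$, and this fact will act as the bridge replacing direct appeal to $\phi_5^0$.

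For the forward inclusion, I would carry out four explicit reductions, each mimicking the corresponding argument in the $\lambda\neq 0$ setting. First, $\phi_4^0(u,v)$ arises from $\phi_2^0(P(u),v)$ after replacing $D(P(u))v$ by $uv$ via $\phi_3^0$; this repeats Example~\ref{phi_4} with $\lambda=0$. Second, $\phi_1^0(u,v)$ is recovered (up to sign) from $\phi_6^0(P(u),v)$ after reducing $D(P(u))$ by $\phi_3^0$, paralleling Remark~\ref{phi1}. Third, $\phi_8^0(u,v)=\phi_7^0(u,v)-P(\phi_4^0(u,v))$, which is Example~\ref{phi8} with $\lambda=0$. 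Fourth, $\phi_9^0(u,v)$ is obtained as already indicated in the remark preceding the statement, using $\phi_6^0$ together with $\phi_2^0(u,P(v))$ and $\phi_3^0$, yielding $\phi_9^0(u,v)\equiv\phi_6^0(u,v)-P(\phi_5^0(u,v))$ modulo lower-order consequences of $\phi_3^0$.

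For the reverse inclusion, only $\phi_6^0$ and $\phi_7^0$ need attention. I would verify directly that
\[
P(\phi_4^0(u,v))=\phi_7^0(u,v)-\phi_8^0(u,v),\qquad P(\phi_5^0(u,v))=\phi_6^0(u,v)-\phi_9^0(u,v),
\]
exactly as in the proof of Proposition~\ref{s3s3'} with $\lambda=0$; the first places $\phi_7^0$ inside $\langle\phi_4^0,\phi_8^0\rangle_\mtOpAlg\subseteq\langle S_{\Ointe}(Z)\rangle_\mtOpAlg$, and the second, combined with the bridging identity $\phi_5^0(u,v)=\phi_2^0(u,P(v))-u\,\phi_3^0(v)$, places $\phi_6^0$ inside $\langle S_{\Ointe}(Z)\rangle_\mtOpAlg$. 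The main obstacle is purely bookkeeping: the computations are not substantially harder than in the $\lambda\neq 0$ case, but one must track which cancellations require $\phi_3^0$ to rewrite $D(P(\cdot))$ and which use the product-rule form of $\phi_2^0$, since at $\lambda=0$ the leading monomials of $\phi_2^0$ and $\phi_5^0$ differ from their $\lambda\neq 0$ counterparts. No new compositions beyond those appearing in Proposition~\ref{s3s3'} are introduced.
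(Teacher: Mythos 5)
Your proposal is correct and follows essentially the same route the paper intends: it mirrors Proposition~\ref{s3s3'} with $\lambda=0$, using the identities $P(\phi_4^0)=\phi_7^0-\phi_8^0$ and $P(\phi_5^0)=\phi_6^0-\phi_9^0$ together with the bridge $\phi_5^0(u,v)=\phi_2^0(u,P(v))-u\,\phi_3^0(v)$, which is exactly the adjustment the paper makes in the remark preceding the statement (where $\phi_9^0$ is produced from $\phi_6^0$ and $\phi_2^0(u,P(v))$ since $\phi_5$ is absent from $\Ointe'$). All the stated identities check out, so the proof is a faithful filling-in of the details the paper leaves to the reader.
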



\begin{thm}\label{S1ID}
$\Ointe$ is  $\Omega$-GS  in $\bk\frakS(Z)$ with respect to  $\leq_{\operatorname{PD}} $.
\end{thm}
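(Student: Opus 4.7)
The plan is to follow the strategy employed in Theorem~\ref{S1} and Theorem~\ref{S3,S4}: enumerate all ambiguities $i \wedge j$ of pairs of OPIs in $\Ointe = \OdRB \cup \{\phi_8^0, \phi_9^0\}$, dispatch those that are complete inclusion compositions via Lemma~\ref{including}, and explicitly verify triviality modulo $(S_\Ointe(Z), p)$ for the rest. A helpful preliminary observation is that the leading monomials of $\phi_1^0,\phi_2^0,\phi_3^0,\phi_4^0$ under $\leq_{\operatorname{PD}}$ are the same as those treated in Theorem~\ref{S1}, while the leading monomials of $\phi_8^0, \phi_9^0$ agree with those of $\phi_8, \phi_9$ in $\inte$; this allows me to reuse large portions of the earlier case analyses.

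First I would dispose of all ambiguities $i\wedge j$ with $i,j \in \{1,2,3,4\}$: these are precisely the ambiguities enumerated in the proof of Theorem~\ref{S1}, and the verification there transfers verbatim. What remains are the \emph{new} ambiguities, namely the compositions where at least one of $\phi_8^0$ or $\phi_9^0$ appears: $i\wedge 8$ and $i\wedge 9$ for $i \in \{1,2,3,4\}$; $8\wedge j$ and $9\wedge j$ for $j \in \{1,2,3,4\}$; and the four self- and cross-ambiguities $8\wedge 8$, $8\wedge 9$, $9\wedge 8$, $9\wedge 9$. I would write out these ambiguities mechanically as was done in Theorem~\ref{S3,S4}.

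For each such ambiguity, Lemma~\ref{including} removes every case in which the inner leading monomial occupies exactly one of the variable slots of the outer OPI. What is left is a (short) list of genuinely intersecting overlaps — mostly of the shape $P(D(P(u)v))P(w)$, $P(D(uP(v)))P(w)$, compositions nested inside $P(D(\cdot))$, and the analogues on the right. The computations are parallel to the $8\wedge 1$ verification displayed in the proof of Theorem~\ref{S3,S4}: one expands both sides, repeatedly substitutes using $\phi_1^0,\phi_2^0,\phi_3^0,\phi_4^0,\phi_8^0,\phi_9^0$ to rewrite in $\leq_{\operatorname{PD}}$-smaller form, and checks that the difference collapses to an element of the form $\sum c_i q_i|_{s_i}$ with $q_i|_{\bar{s_i}}<p$.

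The main obstacle I anticipate is the family of ambiguities pairing $\phi_2^0$ with $\phi_8^0$ or $\phi_9^0$. Because $\bar{\phi_2^0}=D(x)y$ is ``short'' (only two breadth units), there are more overlap patterns to dissect than in the $\lambda\ne 0$ case where $\bar{\phi_2}=D(x)D(y)$ forces both sides to have a $D$; in particular the analogue of the case $2\wedge 2$ from Theorem~\ref{S1} reappears in combination with the nested $P(D(-))$ structure of $\phi_8^0$ and $\phi_9^0$. A related subtlety is that the weight-zero rewrite rule $\phi_2^0(u,P(v)) \equiv D(u)P(v)+uv-D(uP(v))$ essentially plays the role that $\phi_5$ played in Theorem~\ref{S3,S4}, so the cancellations must be traced through $\phi_2^0$ rather than quoted from an auxiliary OPI. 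Once these families are discharged, the remaining compositions (involving $\phi_3^0$, $\phi_4^0$ nested inside $P(D(-))$) fall into the same template as the $8\wedge 1$ computation of Theorem~\ref{S3,S4} and present no new difficulty, completing the verification that $\Ointe$ is $\Omega$-GS.
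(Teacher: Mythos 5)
Your proposal is correct and follows essentially the same route the paper takes (and deliberately omits): the paper proves this case ``as in the previous subsection'', i.e., by the same enumeration of ambiguities, reduction of complete inclusion compositions via Lemma~\ref{including}, and explicit checks of the remaining overlaps used in Theorems~\ref{S1} and~\ref{S3,S4}. Your identification of the weight-zero subtlety --- that $\phi_2^0(u,P(v))$ takes over the role of $\phi_5$, so the compositions pairing $\phi_2^0$ with $\phi_8^0$ and $\phi_9^0$ must be traced through $\phi_2^0$ rather than quoted from an auxiliary OPI --- matches the paper's own remark preceding the theorem.
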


 \begin{thm}
 	Let $Z$ be a set and $A=\bk \cals(Z)\slash I_A$ a nonunital $\bk$-algebra.  Then we have:
	$$\calf^{\Ointe\zhx\Alg}_{\Alg}(A)=\bk\frakS(Z)\slash\left\langle S_{\Ointe}(Z)\cup I_A\right\rangle_\mtOpAlg.$$
	Moreover, assume $I_A$  has a GS  basis $G$ with respect to the degree-lexicographical order $\leq_{\rm {dlex}}$. Then $S_{\Ointe}(Z)\cup G$ is an operated  GS  basis of $\left\langle S_{\Ointe}(Z)\cup I_A\right\rangle_\mtOpAlg$ in $\bk\frakS(Z) $  with respect to $\leq_{\operatorname{PD}}$.
 \end{thm}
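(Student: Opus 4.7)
The plan is to reduce the statement to Theorem~\ref{Thm: GS basis for free nonunital Phi algebra over nonunital alg} applied to $\Phi = \Ointe$ and the monomial order $\leq_{\operatorname{PD}}$. The identification
\[
\calf^{\Ointe\zhx\Alg}_{\Alg}(A)=\bk\frakS(Z)\big/\bigl\langle S_{\Ointe}(Z)\cup I_A\bigr\rangle_{\mtOpAlg}
\]
is immediate from Proposition~\ref{free phi algebra}(a) once we notice that $\Ointe\subseteq \bk\frakS(X)$, so no preliminary work is needed for the first assertion.

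For the second assertion, the plan is to verify the four hypotheses of Theorem~\ref{Thm: GS basis for free nonunital Phi algebra over nonunital alg} in turn. First, the $\Omega$-GS property of $\Ointe$ in $\bk\frakS(Z)$ with respect to $\leq_{\operatorname{PD}}$ is exactly the content of Theorem~\ref{S1ID}. Second, the restriction of $\leq_{\operatorname{PD}}$ to $\cals(Z)\subseteq \frakS(Z)$ coincides with $\leq_{\mathrm{dlex}}$, since all terms involved have $D$-degree and $P$-degree zero and the ties are broken by $\leq_{\mathrm{Dlex}_0}=\leq_{\mathrm{dlex}}$; hence $G$ continues to be a GS basis of $I_A$ under the restricted order. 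Third, one inspects the six OPIs of $\Ointe$ and reads off their leading monomials under $\leq_{\operatorname{PD}}$, namely $P(x)P(y)$, $D(x)y$, $D(P(x))$, $P(x)D(y)$, $P(D(P(x)y))$ and $P(D(xP(y)))$: none of these contain a subword lying in $\cals(X)\setminus X$ (the only associative products that occur are inside bracketed subwords, not at the outer level).

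The one remaining point, and the main obstacle, is to check that for every $\phi\in\Ointe$ and every tuple $(u_1,\dots,u_n)$ in $\frakS(Z)^n$, the polynomial $\phi(u_1,\dots,u_n)\in\bk\frakS(Z)$ either vanishes or has leading monomial $\bar\phi(u_1,\dots,u_n)$. This amounts to showing that substitution does not create cancellation or a new dominant term. The argument I would use is a direct case analysis: for each $\phi_i^0$, compare $\deg_D$, $\deg_P$, $\deg_Z$, $\deg_{GD}$ and $\deg_{GP}$ of the candidate leading monomial with those of every other summand in $\phi_i^0(u_1,\dots,u_n)$. For instance, for $\phi_8^0$ the candidate $P(D(P(u_1)u_2))$ strictly dominates $P(u_1)u_2$ in $\deg_D$ and $\deg_P$; for $\phi_2^0$ the candidate $D(u_1)u_2$ strictly dominates $u_1 D(u_2)$ and $D(u_1 u_2)$ via $\leq_{\operatorname{GD}}$ after equality in the coarser preorders. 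The verifications for $\phi_1^0$, $\phi_3^0$, $\phi_4^0$, $\phi_9^0$ are analogous and in fact already implicit in the stability computations carried out in Subsection~\ref{Subsection: case zero}.

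Having verified the four hypotheses, Theorem~\ref{Thm: GS basis for free nonunital Phi algebra over nonunital alg} gives directly that $S_{\Ointe}(Z)\cup G$ is an $\Omega$-GS basis of $\langle S_{\Ointe}(Z)\cup I_A\rangle_{\mtOpAlg}$ in $\bk\frakS(Z)$ with respect to $\leq_{\operatorname{PD}}$, completing the proof.
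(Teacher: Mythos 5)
Your proposal is correct and follows essentially the same route as the paper: the paper (which omits the details here, referring to the analogous $\lambda\neq 0$ case) likewise obtains the first assertion from Proposition~\ref{free phi algebra}(a) and the second by combining Theorem~\ref{S1ID} with Theorem~\ref{Thm: GS basis for free nonunital Phi algebra over nonunital alg}, after noting that the leading monomials of the OPIs in $\Ointe$ have no subword in $\cals(X)\backslash X$ and that their leading monomials are stable under substitution (which the paper records in the propositions and examples of Subsections~\ref{Subsection: case zero} and the $\lambda=0$ integro-differential subsection). Your extra checks (restriction of $\leq_{\operatorname{PD}}$ to $\cals(Z)$ being $\leq_{\mathrm{dlex}}$, and the degree-by-degree verification of substitution stability) merely make explicit what the paper leaves implicit.
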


\begin{thm}\label{integro}
	Let $Z$ be a set and $A=\bk \cals(Z)\slash I_A$ a nonunital $\bk$-algebra with a GS  basis $G$ with respect to $\leq_{\rm{dlex}}$.   Then  the set
 $\Irr(S_{\Ointe}(Z)\cup G)$ which is by definition the complement of
 $$\left\lbrace q|_{\bar{s}},q|_{P(u)P(v)},q|_{D(u)v},q|_{D(P(u))},q|_{P(u)D(v)},q|_{P(D(P(u)v))},q|_{P(D(uP(v)))}, s\in G,q\in\frakSstar(Z),u,v\in\frakS(Z)\right\rbrace$$ in $\frakS(Z)$   is a linear basis of the free nonunital $0$-integro-differential algebra $\calf^{\Ointe\zhx\Alg}_{\Alg}(A)$ over $A$.

\end{thm}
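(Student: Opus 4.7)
The plan is to proceed exactly as in the analogous linear basis theorems already established in the paper for differential Rota-Baxter algebras (cf.\ Theorem~\ref{different RB} and its nonunital counterpart), since the present statement is the natural companion to Theorem~\ref{S1ID} on the GS property and the preceding theorem identifying $\calf^{\Ointe\zhx\Alg}_{\Alg}(A)$ as a quotient of $\bk\frakS(Z)$.

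First, I would invoke the previous theorem to identify
\[
\calf^{\Ointe\zhx\Alg}_{\Alg}(A) \;=\; \bk\frakS(Z)\big/\bigl\langle S_{\Ointe}(Z)\cup I_A\bigr\rangle_{\mtOpAlg},
\]
together with the conclusion that $S_{\Ointe}(Z)\cup G$ is an $\Omega$-GS basis of the ideal $\bigl\langle S_{\Ointe}(Z)\cup I_A\bigr\rangle_{\mtOpAlg}$ in $\bk\frakS(Z)$ with respect to the monomial order $\leq_{\operatorname{PD}}$. This reduces the statement to a mechanical application of the Composition-Diamond Lemma in the multi-operated nonunital setting, namely Theorem~\ref{Thm: nonunital CD}.

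Next, I would apply Theorem~\ref{Thm: nonunital CD} with $\calg := S_{\Ointe}(Z)\cup G$. The equivalence (a)$\Leftrightarrow$(b) in that theorem gives the direct sum decomposition
\[
\bk\frakS(Z) \;=\; \bk\,\Irr(\calg)\,\oplus\,\langle \calg\rangle_{\mtOpAlg},
\]
and asserts that $\Irr(\calg)$ is a $\bk$-basis of the quotient $\bk\frakS(Z)/\langle \calg\rangle_{\mtOpAlg}$, which by the first step is precisely $\calf^{\Ointe\zhx\Alg}_{\Alg}(A)$. It then only remains to unpack what $\Irr(S_{\Ointe}(Z)\cup G)$ is in explicit terms: by definition it is the complement in $\frakS(Z)$ of the set of all $q|_{\bar{s}}$ with $s\in\calg$ and $q\in\frakSstar(Z)$. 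Running through the leading monomials of $\Ointe$ under $\leq_{\operatorname{PD}}$, namely $\overline{\phi_1^0(u,v)}=P(u)P(v)$, $\overline{\phi_2^0(u,v)}=D(u)v$, $\overline{\phi_3^0(u)}=D(P(u))$, $\overline{\phi_4^0(u,v)}=P(u)D(v)$, $\overline{\phi_8^0(u,v)}=P(D(P(u)v))$, $\overline{\phi_9^0(u,v)}=P(D(uP(v)))$, together with the leading monomials $\bar s$ for $s\in G$, produces the complement exactly as described in the statement.

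I do not expect any genuine obstacle in this argument: the heavy lifting has already been done in establishing that $\Ointe$ is $\Omega$-GS (Theorem~\ref{S1ID}) and that $S_{\Ointe}(Z)\cup G$ remains a GS basis when combined with $G$ (via Theorem~\ref{Thm: GS basis for free nonunital Phi algebra over nonunital alg}, whose hypothesis on leading monomials having no subword in $\cals(X)\setminus X$ is satisfied by $\Ointe$). The only mild point worth checking is that the explicit list of forbidden subwords $q|_{P(u)P(v)},\,q|_{D(u)v},\,q|_{D(P(u))},\,q|_{P(u)D(v)},\,q|_{P(D(P(u)v))},\,q|_{P(D(uP(v)))}$ indeed exhausts all leading monomials of elements of $S_{\Ointe}(\frakS(Z))$, which follows since each $\phi_i^0\in\Ointe$ is multilinear and, for any $u,v\in\frakS(Z)$, the leading monomial of $\phi_i^0(u,v)$ is $\overline{\phi_i^0}(u,v)$ as recorded in Subsection~\ref{Subsection: case zero}.
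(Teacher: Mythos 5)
Your proposal is correct and follows exactly the route the paper takes: after the preceding results identify $\calf^{\Ointe\zhx\Alg}_{\Alg}(A)$ as the quotient by $\left\langle S_{\Ointe}(Z)\cup I_A\right\rangle_{\mtOpAlg}$ and establish that $S_{\Ointe}(Z)\cup G$ is an $\Omega$-GS basis, the linear basis statement is obtained as an immediate application of the Composition-Diamond Lemma (Theorem~\ref{Thm: nonunital CD}), with the explicit list of forbidden subwords coming from the leading monomials of the $\phi_i^0$ under $\leq_{\operatorname{PD}}$, just as you describe.
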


\subsection{ Case of unital   algebras}\

Now we consider  unital integro-differential algebras. Since the proofs are similar to those in the previous subsections, we omit most of them.
The study still is divided into  cases of $\lambda \neq 0$ and $\lambda=0$.

When $\lambda \neq 0$, since unital integro-differential algebras have the condition  $D(1)=0$, we put $\uinte:=\inte\cup \{D(1)\}$.

\begin{thm}\label{uS1ID}
$\uinte$ is  $\Omega$-GS  in $\bk\frakM(Z)$ with respect to $\leq_{\operatorname{uPD}} $.
\end{thm}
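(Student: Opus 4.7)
The plan is to mimic the proof of the parallel Theorem~\ref{uS1} in the differential Rota--Baxter setting, reducing the verification to the nonunital case treated in Theorem~\ref{S3,S4}. By Proposition~\ref{uDl}, the leading monomial of each $\phi_i(u_1,\dots,u_n)$ under $\leq_{\operatorname{uPD}}$ agrees with its leading monomial under $\leq_{\operatorname{PD}}$ whenever the arguments lie in $\frakS(Z)=\frakM(Z)\setminus\{1\}$. Consequently every intersection or inclusion composition among elements of $S_{\inte}(\frakS(Z))$ is already resolved by Theorem~\ref{S3,S4}, and the remaining work splits into (a) the unital specializations in which at least one argument equals $1$, and (b) the compositions involving the new OPI $D(1)$.

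For (a), I would verify by direct computation that each such specialization either reduces to zero modulo OPIs already in $\uinte$, or retains the same leading-monomial shape so that the case analysis of Theorem~\ref{S3,S4} transfers verbatim. Explicitly, $\phi_2(1,v)=D(1)(D(v)+\lambda^{-1}v)$, $\phi_2(u,1)=(D(u)+\lambda^{-1}u)D(1)$ and $\phi_2(1,1)$ are each divisible by $D(1)$; $\phi_4(u,1)=(P(u)+\lambda u)D(1)-\phi_3(u)$ and $\phi_5(1,v)=D(1)(P(v)+\lambda v)-\phi_3(v)$ are trivial modulo $\{D(1),\phi_3\}$; and $\phi_8(u,1)=P(\phi_3(u))$, $\phi_9(1,v)=P(\phi_3(v))$ are trivial modulo $\phi_3$. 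The leftover specializations, such as $\phi_1(1,v)$, $\phi_8(1,v)$ or $\phi_9(u,1)$, keep their leading-monomial shape thanks to Proposition~\ref{uDl}, so the composition calculations of Theorem~\ref{S3,S4} carry over with only cosmetic changes.

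For (b), the breadth of $\overline{D(1)}=D(1)$ being $1$ precludes any intersection composition. Every inclusion composition $(f,D(1))_p^q$ occurs precisely when $\overline{f}$ contains the subword $D(1)$; in each such case $(f,D(1))_p^q=f-q|_{D(1)}=f-\overline{f}$ is the tail of $f$, and inspection shows that every surviving monomial still contains $D(1)$ as a subword and lies strictly below $p$ under $\leq_{\operatorname{uPD}}$, yielding triviality modulo $(S_{\uinte}(\frakM(Z)),p)$.

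The main obstacle will be the bookkeeping around occurrences of the symbol $\dagger_P=\lfloor 1\rfloor_P$ in leading monomials such as $P(D(\dagger_P v))$ of $\phi_8(1,v)$ and $P(D(u\dagger_P))$ of $\phi_9(u,1)$: I must verify that every overlap of such a monomial with another leading monomial of $\uinte$ matches one of the cases enumerated in Theorem~\ref{S3,S4}, and that the inequalities of Proposition~\ref{uDl} are strong enough to keep the composition tails strictly below the overlap monomial. Once this is confirmed, an appeal to the Composition--Diamond Lemma (Theorem~\ref{Thm: unital CD}) completes the argument.
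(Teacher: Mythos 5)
Your proposal follows essentially the same route the paper intends (its proof is omitted as ``similar''): reduce to the nonunital Theorem~\ref{S3,S4} by noting via Proposition~\ref{uDl} that the leading monomials keep their shapes under $\leq_{\operatorname{uPD}}$, then check the specializations at $1$ and the compositions involving $D(1)$. One small overstatement in your part (b): for $f=\phi_4(u,1)$ and $f=\phi_5(1,v)$, whose leading monomials are $P(u)D(1)$ and $D(1)P(v)$, the tail of $(f,D(1))^q_p$ contains the monomials $D(P(u))$ (resp.\ $D(P(v))$) and $u$ (resp.\ $v$), which do \emph{not} contain $D(1)$ as a subword; however, as your computations in part (a) already show, this tail equals $-\phi_3+\lambda(\,\cdot\,)D(1)$, so it is still trivial modulo $\bigl(S_{\uinte}(\frakM(Z)),p\bigr)$ and no genuine gap results.
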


 \begin{thm}
 	Let $Z$ be a set and $A=\bk \calm(Z)\slash I_A$ a unital $\bk$-algebra.  Then we have:
	$$\calf^{\uinte\zhx\uAlg}_{\uAlg}(A)=\bk\frakM(Z)\slash\left\langle S_{\uinte}(Z)\cup I_A\right\rangle_\mtuOpAlg.$$
	Moreover, assume $I_A$  has a GS  basis $G$ with respect to the degree-lexicographical order $\leq_{\rm {dlex}}$. Then $S_{\uinte}(Z)\cup G$ is an operated  GS  basis of $\left\langle S_{\uinte}(Z)\cup I_A\right\rangle_\mtuOpAlg$ in $\bk\frakM(Z) $  with respect to $\leq_{\operatorname{uPD}}$.

 \end{thm}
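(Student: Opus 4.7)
The first equality $\calf^{\uinte\zhx\uAlg}_{\uAlg}(A)=\bk\frakM(Z)\slash\langle S_{\uinte}(Z)\cup I_A\rangle_\mtuOpAlg$ is a direct instance of Proposition~\ref{free phi algebra}(b), applied to $\Phi=\uinte$, so the first assertion requires no further work.

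For the second assertion, the plan is to invoke Theorem~\ref{Thm: GS basis for free unital Phi algebra over unital alg} with $\Phi=\uinte$ and monomial order $\leq_{\operatorname{uPD}}$. That theorem has three hypotheses. First, $\uinte$ must be $\Omega$-GS on $\bk\frakM(Z)$ with respect to $\leq_{\operatorname{uPD}}$, which is exactly Theorem~\ref{uS1ID}. Second, $G$ must be a GS basis of $I_A$ with respect to the restriction of $\leq_{\operatorname{uPD}}$ to $\calm(Z)$; by the construction in Definition~\ref{udl} this restriction coincides with $\leq_{\rm dlex}$, so it follows from the given hypothesis on $G$. Third, one must verify both that the leading monomial of every $\phi \in \uinte$ contains no subword in $\calm(X)\setminus X$, and that for all $u_1,\ldots,u_n\in\frakM(Z)$ the evaluation $\phi(u_1,\ldots,u_n)$ is either zero or has leading monomial $\bar{\phi}(u_1,\ldots,u_n)$.

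The first half of the third hypothesis is immediate from inspection: the leading monomials $P(x)P(y)$, $D(x)D(y)$, $D(P(x))$, $P(x)D(y)$, $D(x)P(y)$, $P(D(P(x)y))$, $P(D(xP(y)))$, and $D(1)$ are all built from single-letter factors and bracketed singletons, so none contains a word in $\calm(X)\setminus X$ as a subword.

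The main technical point is the second half, which I would verify by splitting into two regimes. When all $u_i$ lie in $\frakM(Z)\setminus\{1\}=\frakS(Z')$, the leading-monomial comparisons under $\leq_{\operatorname{uPD}}$ reduce to those under $\leq_{\operatorname{PD}}$ via Definition~\ref{udl} and Proposition~\ref{uDl}, and these were established in Subsection~\ref{Subsection: Case of lambda non zero} for each $\phi\in\dRB$ as well as for $\phi_8,\phi_9$. When some $u_i=1$, the expected leading monomial may no longer dominate, so one inspects each OPI and checks that the evaluation either vanishes or collapses to a scalar multiple of another element of $\uinte$: for instance, $\phi_2(u,1)\equiv 0$, $\phi_4(u,1)\equiv -\phi_3(u)$, $\phi_5(1,v)\equiv -\phi_3(v)$, while $D(1)$ itself handles the $D$-degenerate case, and parallel reductions cover $\phi_6,\phi_7,\phi_8,\phi_9$. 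Once this degeneracy bookkeeping is in place, Theorem~\ref{Thm: GS basis for free unital Phi algebra over unital alg} delivers the conclusion.
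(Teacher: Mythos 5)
Your overall architecture is the same as the paper's (omitted) proof: the first equality is Proposition~\ref{free phi algebra}(b); the second assertion is meant to follow from Theorem~\ref{Thm: GS basis for free unital Phi algebra over unital alg} applied with $\Phi=\uinte$, whose hypotheses are supplied by Theorem~\ref{uS1ID}, by the observation that the restriction of $\leq_{\operatorname{uPD}}$ to $\calm(Z)$ is $\leq_{\rm dlex}$, and by the leading-monomial checks. Up to that point your proposal is fine.

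The genuine gap is in your treatment of evaluations with some $u_i=1$. The hypothesis of Theorem~\ref{Thm: GS basis for free unital Phi algebra over unital alg} requires that the \emph{literal} element $\phi(u_1,\dots,u_n)\in\bk\frakM(Z)$ either vanishes or has leading monomial $\overline{\phi}(u_1,\dots,u_n)$; ``collapses to a scalar multiple of another element of $\uinte$'' is not one of the admissible alternatives, and the identities you quote are only congruences modulo $D(1)$, not equalities: literally $\phi_2(u,1)=D(u)D(1)+\lambda^{-1}uD(1)\neq 0$ and $\phi_4(u,1)=P(u)D(1)-D(P(u))+u+\lambda uD(1)\neq-\phi_3(u)$. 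Worse, if your identities held on the nose, the hypothesis would actually \emph{fail} (the leading monomial of $-\phi_3(u)$ is $D(P(u))$, not $\overline{\phi_4}(u,1)=P(u)D(1)$), so the theorem could not be invoked at all; your degeneracy bookkeeping therefore does not verify what is needed. The correct check, which is exactly what Definition~\ref{udl} and Proposition~\ref{uDl} are designed for, is that with the conventions $\deg_P(\lfloor 1\rfloor_P)=1$, $\deg_{GP}(\lfloor 1\rfloor_P)=0$ (and the analogous treatment of $\lfloor 1\rfloor_D$), every such evaluation is nonzero and \emph{still} has leading monomial $\overline{\phi}(u_1,\dots,u_n)$ under $\leq_{\operatorname{uPD}}$: e.g.\ $\phi_2(u,1)$ has leading monomial $D(u)D(1)$ by comparing $\deg_D$; $\phi_4(u,1)$ has leading monomial $P(u)D(1)$ since its $Z'$-degree exceeds that of $D(P(u))$; $\phi_4(1,v)$ and $\phi_5(u,1)$ are handled by Proposition~\ref{uDl}(d) and (e); $\phi_1$ with a unit argument by Proposition~\ref{uDl}(a)--(c); and $\phi_8$, $\phi_9$ by comparing $\deg_D$. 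With the degenerate cases argued this way (and noting the small slip that $\phi_6,\phi_7$ are not members of $\uinte=\inte\cup\{D(1)\}$, so they need no checking), the rest of your argument goes through.
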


\begin{thm}
	Let $Z$ be a set, $A=\bk \calm(Z)\slash I_A$ a unital $\bk$-algebra with a GS  basis $G$ with respect to $\leq_{\rm{dlex}}$.   Then a linear basis of the free unital $\lambda$-integro-differential algebra   $\calf^{\uinte\zhx\uAlg}_{\uAlg}(A)$ over $A$ is given by  the set
 $\Irr(S_{\uinte}(Z)\cup G)$,  which is by definition the complement  in $\frakM(Z)$ of  the subset consisting of  $  q|_w$ where  $w$ runs through
 $$  \bar{s},P(u)P(v),D(u)D(v),D(P(u)),P(u)D(v), D(u)P(v),P(D(P(u)v)),P(D(uP(v))), D(1)$$
 for arbitrary $ s\in G,q\in\frakMstar(Z),u,v\in\frakM(Z).$

\end{thm}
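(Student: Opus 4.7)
The plan is to derive this result immediately from the preceding theorem (asserting that $S_{\uinte}(Z)\cup G$ is an $\Omega$-GS basis of $\langle S_{\uinte}(Z)\cup I_A\rangle_\mtuOpAlg$ under $\leq_{\operatorname{uPD}}$) combined with the Composition-Diamond Lemma in the unital multi-operated setup, Theorem~\ref{Thm: unital CD}. No new algebraic input is required; the work lies only in reading off the leading monomials.

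First, by Proposition~\ref{free phi algebra}(b), one has the presentation
$$\calf^{\uinte\zhx\uAlg}_{\uAlg}(A) \;=\; \bk\frakM(Z)\big/\langle S_{\uinte}(Z)\cup I_A\rangle_\mtuOpAlg.$$
Feeding the $\Omega$-GS basis $S_{\uinte}(Z)\cup G$ produced by the preceding theorem into Theorem~\ref{Thm: unital CD} yields that $\Irr(S_{\uinte}(Z)\cup G)$ is a $\bk$-linear basis of the quotient, hence of $\calf^{\uinte\zhx\uAlg}_{\uAlg}(A)$.

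Next I would make the set of reducible monomials $\{q|_{\bar s}\mid s\in S_{\uinte}(Z)\cup G,\, q\in\frakMstar(Z)\}$ explicit, by identifying the leading monomials of every element of $S_{\uinte}(Z)\cup G$. For the seven OPIs $\phi_1,\phi_2,\phi_3,\phi_4,\phi_5,\phi_8,\phi_9$ evaluated at arbitrary $u,v\in\frakM(Z)$, the leading monomials under $\leq_{\operatorname{uPD}}$ are respectively $P(u)P(v)$, $D(u)D(v)$, $D(P(u))$, $P(u)D(v)$, $D(u)P(v)$, $P(D(P(u)v))$ and $P(D(uP(v)))$; these computations were carried out in Section~\ref{Section: differential Rota-Baxter algebras} for $u,v\in\frakS(Z)$. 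The extra generator $D(1)\in\uinte$ contributes its own leading monomial $D(1)$, and the remaining elements $s\in G\subseteq\bk\calm(Z)$ contribute $\bar s$. Substituting these leading monomials into the definition of $\Irr$ reproduces exactly the complement described in the statement.

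The main (mild) obstacle is handling the boundary cases in which one of $u,v$ equals $1$: one has to confirm both that the listed leading term is still correct under $\leq_{\operatorname{uPD}}$ and that such degenerate substitutions do not yield a genuinely new OPI that would enlarge the reducible set. Persistence of the leading term under $\leq_{\operatorname{uPD}}$ is guaranteed by the inequalities of Proposition~\ref{uDl}, while the remark preceding Theorem~\ref{uS1ID} already shows that each degenerate substitution either vanishes modulo $\uinte$ or reproduces an OPI already on the list, so no additional reducible patterns appear.
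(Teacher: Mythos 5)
Your proposal is correct and matches the paper's (omitted, but clearly intended) argument: the paper derives this linear basis exactly as in the earlier cases, namely by feeding the $\Omega$-GS basis $S_{\uinte}(Z)\cup G$ from the preceding theorem into the unital Composition-Diamond Lemma and reading off the leading monomials of $\phi_1,\dots,\phi_5,\phi_8,\phi_9$ and $D(1)$. Your extra care with the degenerate substitutions $u=1$ or $v=1$ (via Proposition~\ref{uDl} and the analogues of the remarks in the differential Rota-Baxter section) is exactly the check the paper leaves implicit when it says the unital proofs are similar and omits them.
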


When $\lambda= 0$, denote $\Ouinte:=\Ointe$.

\begin{thm}\label{uS2ID}
$\Ouinte$ is  $\Omega$-GS  in $\bk\frakM(Z)$ with respect to $\leq_{\operatorname{uPD}} $.
\end{thm}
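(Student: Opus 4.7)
The plan is to adapt the proof of Theorem~\ref{S1ID} to the unital setting, mirroring the way Theorem~\ref{uS2} extends Theorem~\ref{S1} in Subsection~\ref{Subsection: case unital   algebras}. Since $\Ouinte = \Ointe$ as a set of OPIs, the only difference from the nonunital setting is that arguments now range over $\frakM(Z)$ instead of $\frakS(Z)$, so the composition analysis splits into two parts: ambiguities whose substituted variables all lie in $\frakS(Z) = \frakM(Z) \setminus \{1\}$, and genuinely new ones in which some variable equals $1$.

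First I would verify that for any $u, v \in \frakM(Z)$ the leading monomial of $\phi_i^0(u,v)$ under $\leq_{\operatorname{uPD}}$ agrees with the one computed under $\leq_{\operatorname{PD}}$ in Subsection~\ref{Subsection: case zero}, namely $P(u)P(v)$, $D(u)v$, $D(P(u))$, $P(u)D(v)$, $P(D(P(u)v))$, $P(D(uP(v)))$ for $i = 1, 2, 3, 4, 8, 9$ respectively. The only nontrivial checks occur when some argument equals $1$; these I would settle by Proposition~\ref{uDl}. For instance, $\phi_1^0(1,v)$ has leading monomial $\lfloor 1 \rfloor_P \lfloor v \rfloor_P$ by Proposition~\ref{uDl}(b), and analogous arguments handle $\phi_4^0$, $\phi_8^0$, $\phi_9^0$ at $1$.

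Next I would record the degenerate reductions obtained by substituting $1$:
\begin{align*}
\phi_2^0(1,v) &\equiv D(1)v, \quad \phi_2^0(u,1) \equiv uD(1), \quad \phi_2^0(1,1) = D(1),\\
\phi_4^0(u,1) &\equiv -\phi_3^0(u),\\
\phi_8^0(u,1) &= P(D(P(u))) - P(u) \equiv 0 \text{ via } \phi_3^0(u),\\
\phi_9^0(1,v) &= P(D(P(v))) - P(v) \equiv 0 \text{ via } \phi_3^0(v).
\end{align*}
These show that $1$-substitutions produce no new OPIs outside $\langle S_{\Ouinte}(Z)\rangle_{\mtuOpAlg}$; in particular $D(1) \equiv 0$ already follows from $\phi_2^0(1,1)$, so no separate $D(1)$ generator is needed, exactly as for $\OudRB$ in the remark preceding Theorem~\ref{uS2}.

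Then I would traverse the composition tables. All ambiguities with arguments in $\frakS(Z)$ are trivial modulo $S_{\Ouinte}(Z)$ by the proof of Theorem~\ref{S1ID}, and Lemma~\ref{including} dispatches every complete inclusion composition automatically. What remains are the new intersection and inclusion compositions where $\dagger_P = \lfloor 1 \rfloor_P$ or $\dagger_D = \lfloor 1 \rfloor_D$ occurs inside the leading monomial; each of these reduces by the same rewriting steps as its nonunital analogue, after applying the collapses above and using Proposition~\ref{uDl} to ensure every intermediate term is strictly smaller under $\leq_{\operatorname{uPD}}$. The main obstacle is the bookkeeping for the $\phi_8^0 \wedge \phi_9^0$ and $\phi_9^0 \wedge \phi_8^0$ ambiguities in which a $1$ is inserted between the nested $P \circ D \circ P$ layers: there the leading monomial has large $\deg_{GP}$ and $\deg_{GD}$, and one must confirm that each rewrite strictly decreases one of the finer refinements entering $\leq_{\operatorname{uPD}}$. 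No conceptual ingredient beyond the proof of Theorem~\ref{S1ID} and the order estimates of Proposition~\ref{uDl} is needed, but the combinatorial verification of the unital-specific compositions is the bulk of the work.
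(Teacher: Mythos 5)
Your proposal is correct and follows exactly the route the paper intends: the paper omits this proof, stating it is obtained by the same arguments as the nonunital case (Theorem~\ref{S1ID}) together with the unital adjustments of Subsection~\ref{Subsection: case unital   algebras}, i.e.\ checking via Proposition~\ref{uDl} that leading monomials are unchanged under $\leq_{\operatorname{uPD}}$ and that substituting $1$ only yields collapses such as $\phi_2^0(1,1)=D(1)$ and $\phi_4^0(u,1)\equiv-\phi_3^0(u)$, so no new OPIs or non-trivial compositions arise. Your degenerate reductions and the observation that $D(1)$ need not be adjoined match the paper's remarks, so the proposal is essentially the paper's (omitted) proof spelled out.
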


 \begin{thm}
 	Let $Z$ be a set and $A=\bk \calm(Z)\slash I_A$ a unital $\bk$-algebra.  Then we have:
	$$\calf^{\Ouinte\zhx\uAlg}_{\uAlg}(A)=\bk\frakM(Z)\slash\left\langle S_{\Ouinte}(Z)\cup I_A\right\rangle_\mtuOpAlg.$$
	Moreover, assume $I_A$  has a GS  basis $G$ with respect to the degree-lexicographical order $\leq_{\rm {dlex}}$. Then $S_{\Ouinte}(Z)\cup G$ is an operated  GS  basis of $\left\langle S_{\Ouinte}(Z)\cup I_A\right\rangle_\mtuOpAlg$ in $\bk\frakM(Z) $  with respect to $\leq_{\operatorname{uPD}}$.

 \end{thm}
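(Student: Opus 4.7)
The plan is to mirror the proofs of the analogous results for $\OudRB$ (Subsection~\ref{Subsection: case unital   algebras}) and for $\Ointe$ (Theorem~\ref{integro}). The first assertion---the identification of the free $\Ouinte$-algebra---is an immediate instance of Proposition~\ref{free phi algebra}(b) with $\Phi=\Ouinte$, since by hypothesis $A=\bk\calm(Z)/I_A$. Only the $\Omega$-GS claim requires argument, and the strategy is to verify the four hypotheses of Theorem~\ref{Thm: GS basis for free unital Phi algebra over unital alg} with $\Phi=\Ouinte$ and $\leq\,=\,\leq_{\operatorname{uPD}}$.

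First, I would note that $\Ouinte$ is $\Omega$-GS on $\bk\frakM(Z)$ with respect to $\leq_{\operatorname{uPD}}$ by Theorem~\ref{uS2ID}, and that $G$ is a GS basis of $I_A$ in $\bk\calm(Z)$ with respect to the restriction of $\leq_{\operatorname{uPD}}$ to $\calm(Z)$ because this restriction coincides with the degree-lexicographical order $\leq_{\mathrm{dlex}}$ (all monomials in $\calm(Z)$ have $D$-degree, $P$-degree, $GD$-degree and $GP$-degree equal to $0$, and $\leq_{\operatorname{uPD}}$ reduces to $\leq_{\mathrm{dlex}}$ in that case). Next I would inspect, one by one, the leading monomials of the six elements of $\Ouinte$, namely $P(x)P(y)$, $D(x)y$, $D(P(x))$, $P(x)D(y)$, $P(D(P(x)y))$ and $P(D(xP(y)))$, and observe that each variable $x,y\in X$ occurs either isolated or within brackets, so none of these leading monomials contains a subword lying in $\calm(X)\setminus X$.

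The last hypothesis---that for all $u_1,\dots,u_n\in\frakM(Z)$ the element $\phi(u_1,\dots,u_n)$ either vanishes or has leading monomial exactly $\bar\phi(u_1,\dots,u_n)$---is the only nontrivial point. For elements of $\frakS(Z)$ this has already been justified in the computations opening Subsections~\ref{Subsection: Case of lambda non zero} and~\ref{Subsection: case zero} (using the successive preorders $\leq_{\mathrm{D}}$, $\leq_{\mathrm{P}}$, $\leq_{\mathrm{dZ}}$, $\leq_{\mathrm{GD}}$ and $\leq_{\mathrm{GP}}$). It remains to handle the substitutions $u_i=1$. The critical cases $\phi_2^0(1,v)=D(1)v$, $\phi_4^0(u,1)\equiv -\phi_3^0(u)$, $\phi_8^0(u,1)\equiv 0$, $\phi_9^0(1,v)\equiv 0$, etc., are all accounted for by the explicit identifications $\lfloor 1\rfloor_P=\dagger_P$, $\lfloor 1\rfloor_D=\dagger_D$ built into Definition~\ref{udl}, together with Proposition~\ref{uDl}. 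In each surviving case one checks directly that the $D$- and $P$-degree terms still dominate, so the leading monomial is indeed $\bar\phi(u_1,\dots,u_n)$ (or the whole expression vanishes).

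The main obstacle is the verification of this last point with the unity substituted into several OPIs at once (notably in $\phi_1^0, \phi_8^0$ and $\phi_9^0$, which contain nested $P$'s and $D$'s). This is however precisely where the bespoke monomial order $\leq_{\operatorname{uPD}}$ pays off: the lexicographic stacking of preorders together with the inequalities recorded in Proposition~\ref{uDl} ensures that no collapse due to $1$ can promote a non-leading monomial above $\bar\phi(u_1,\dots,u_n)$. Once this case analysis is complete, Theorem~\ref{Thm: GS basis for free unital Phi algebra over unital alg} applies and yields the claim.
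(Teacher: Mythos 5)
Your overall route is the one the paper intends: the first identity is Proposition~\ref{free phi algebra}(b), and the Gr\"obner--Shirshov claim is meant to follow from Theorem~\ref{uS2ID} together with Theorem~\ref{Thm: GS basis for free unital Phi algebra over unital alg} (the paper itself omits the proof as ``similar to the previous subsections''). The restriction of $\leq_{\operatorname{uPD}}$ to $\calm(Z)$ being $\leq_{\mathrm{dlex}}$ and the absence of subwords from $\calm(X)\setminus X$ in the six leading monomials are fine as you state them.

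The gap is in your third paragraph, where you assert that for every substitution, including $u_i=1$, ``the $D$- and $P$-degree terms still dominate, so the leading monomial is indeed $\bar\phi(u_1,\dots,u_n)$ (or the whole expression vanishes).'' This is false for $\phi_2^0$ with $y=1$: in $\frakM(Z)$ one has $D(u)\cdot 1=D(u\cdot 1)=D(u)$, so the top term cancels and
$$\phi_2^0(u,1)=D(u)\cdot 1+uD(1)-D(u\cdot 1)=u\lfloor 1\rfloor_D,$$
which neither vanishes nor has leading monomial $\overline{\phi_2^0}(u,1)=D(u)$ for $u\neq 1$. (A similar issue threatens $\phi_4^0(u,1)=P(u)D(1)-D(P(u))+u$ unless one fixes the convention $\deg_D(\lfloor 1\rfloor_D)=1$, which Definition~\ref{udl} imposes only for $\dagger_P$.) Hence the hypothesis of Theorem~\ref{Thm: GS basis for free unital Phi algebra over unital alg} fails as literally stated, and the theorem cannot simply be invoked; note also that your ``$\equiv$'' computations (e.g.\ $\phi_8^0(u,1)\equiv 0$) are reductions modulo the ideal, which is not what that hypothesis asks for. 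To close the gap you must treat these degenerate instances separately, for example by observing that $D(1)=\phi_2^0(1,1)\in S_{\Ouinte}(\frakM(Z))$ and $\phi_2^0(u,1)=u\,\phi_2^0(1,1)$, so these elements are reducible by $D(1)$ and all compositions they create are trivial, and then rerunning the argument of \cite[Theorem~5.9]{QQWZ21} with this weakened hypothesis (or excising the degenerate instances from $S_{\Ouinte}(\frakM(Z))$ before applying the theorem). The paper glosses over exactly this point, so your proposal reproduces its strategy but does not yet supply the missing verification.
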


\begin{thm}
	Let $Z$ be a set and $A=\bk \calm(Z)\slash I_A$ a unital $\bk$-algebra with a GS  basis $G$ with respect to $\leq_{\rm{dlex}}$.   Then  the set
 $\Irr(S_{\Ouinte}(Z)\cup G)$ which is by definition the complement of
 $$\left\lbrace q|_{\bar{s}},q|_{P(u)P(v)},q|_{D(u)v},q|_{D(P(u))},q|_{P(u)D(v)} ,q|_{P(D(P(u)v))},q|_{P(D(uP(v)))}, s\in G,q\in\frakMstar(Z),u,v\in\frakM(Z)\right\rbrace$$ in $\frakM(Z)$   is a linear basis of the free unital $0$-integro-differential algebra   $\calf^{\Ouinte\zhx\uAlg}_{\uAlg}(A)$ over $A$.
\end{thm}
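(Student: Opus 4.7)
The plan is to derive this linear basis theorem as a direct consequence of the preceding theorem, which establishes that $S_{\Ouinte}(Z)\cup G$ is an $\Omega$-GS basis of $\left\langle S_{\Ouinte}(Z)\cup I_A\right\rangle_\mtuOpAlg$ in $\bk\frakM(Z)$ with respect to $\leq_{\operatorname{uPD}}$, together with the Composition-Diamond Lemma in its unital form (Theorem~\ref{Thm: unital CD}) and the identification of the free unital $\Phi$-algebra in Proposition~\ref{free phi algebra}(b).

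First I would invoke the previous theorem to secure the $\Omega$-GS basis property. By Theorem~\ref{Thm: unital CD}, this immediately yields a direct sum decomposition
\[
\bk\frakM(Z) = \bk\,\Irr(S_{\Ouinte}(Z)\cup G) \oplus \left\langle S_{\Ouinte}(Z)\cup G \right\rangle_{\mtuOpAlg},
\]
and hence $\Irr(S_{\Ouinte}(Z)\cup G)$ descends to a $\bk$-basis of the quotient $\bk\frakM(Z)\,/\,\left\langle S_{\Ouinte}(Z)\cup I_A\right\rangle_{\mtuOpAlg}$. By Proposition~\ref{free phi algebra}(b), this quotient is precisely the free unital $\Ouinte$-algebra $\calf^{\Ouinte\zhx\uAlg}_{\uAlg}(A)$, so the abstract form of the basis is settled.

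The second step is to convert the abstract description of $\Irr(S_{\Ouinte}(Z)\cup G)$ into the explicit set claimed in the statement. For this I would record the leading monomials with respect to $\leq_{\operatorname{uPD}}$ of each OPI in $\Ouinte$ evaluated at arbitrary $u,v\in\frakM(Z)$: namely $P(u)P(v)$ for $\phi_1^0$, $D(u)v$ for $\phi_2^0$, $D(P(u))$ for $\phi_3^0$, $P(u)D(v)$ for $\phi_4^0$, $P(D(P(u)v))$ for $\phi_8^0$, and $P(D(uP(v)))$ for $\phi_9^0$. These identifications have already been made in the nonunital setting in Section~\ref{Section: differential Rota-Baxter algebras} and in the previous subsection; the same computations survive verbatim over $\frakM(Z)$ once one checks, using Proposition~\ref{uDl}, that no new leading monomial is produced when $u$ or $v$ equals $1$. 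Since $G\subseteq \bk\calm(Z)$ has leading monomials $\bar s$ for $s\in G$, the set $\Irr(S_{\Ouinte}(Z)\cup G)$ is then, by its very definition, the complement in $\frakM(Z)$ of the set of $q|_w$ with $w$ ranging over these leading monomials, matching the claim.

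The only mild obstacle is the second step: one must verify that the list of leading monomials produced by $S_{\Ouinte}(\frakM(Z))$ is indeed $\{P(u)P(v), D(u)v, D(P(u)), P(u)D(v), P(D(P(u)v)), P(D(uP(v)))\}$ with $u,v\in\frakM(Z)$, with no degeneracies occurring when $u=1$ or $v=1$ that would either kill a leading monomial or produce an overlap with $G$. This is a routine case analysis using the bracket/unity comparisons of Proposition~\ref{uDl} and the fact that $\phi_2^0(1,v)=\phi_2^0(u,1)=0$ and $\phi_4^0(u,1)$ reduces to a multiple of $\phi_3^0$, which are the same checks used to justify omitting $D(1)$ from $\Ouinte$ in the preceding subsection. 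Once these verifications are in place, the theorem follows.
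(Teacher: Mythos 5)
Your proposal is correct and follows essentially the same route as the paper, which likewise deduces this theorem from the preceding GS-basis statement via the unital Composition--Diamond Lemma (Theorem~\ref{Thm: unital CD}) together with the identification $\calf^{\Ouinte\zhx\uAlg}_{\uAlg}(A)=\bk\frakM(Z)/\left\langle S_{\Ouinte}(Z)\cup I_A\right\rangle_\mtuOpAlg$ and the list of leading monomials of $\Ouinte$. One tiny slip in your degeneracy check: $\phi_2^0(u,1)=uD(1)$ and $\phi_2^0(1,v)=D(1)v$ are not zero (indeed the paper notes $\phi_2^0(1,1)=D(1)$), but their leading monomials are still instances of $q|_{D(u)v}$ with $u$ or $v$ equal to $1$, so the stated description of $\Irr(S_{\Ouinte}(Z)\cup G)$ is unaffected.
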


\subsection{Differential Rota-Baxter algebras vs integro-differential algebras }\

Since integro-differential algebras have one more defining relation  than differential Rota-Baxter algebras,
by Proposition~\ref{free phi algebra}, the free integro-differential algebra over an algebra $A$ is a quotient of the free differential Rota-Baxter algebra over $A$ in general.  However, by using the descriptions of  $\dRB$  and $\inte$ and    Theorems~\ref{S2}    and \ref{S3,S4},  we could  also show that the former one is a differential Rota-Baxter algebra subalgebra of the later one.

\begin{thm}\label{thm: RB vs ID}
The free nonunital $\lambda$-integro-differential algebra $\calf^{\inte\zhx\Alg}_{\Alg}(A)$ over an algebra $A$ is differential Rota-Baxter subalgebra of the free nonunital $\lambda$-differential Rota-Baxter algebra $\calf^{\dRB\zhx\Alg}_{\Alg}(A)$ over $A$.
 \end{thm}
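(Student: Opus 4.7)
The plan is to build the embedding $\iota\colon \calf^{\inte\zhx\Alg}_{\Alg}(A)\hookrightarrow \calf^{\dRB\zhx\Alg}_{\Alg}(A)$ by exploiting the comparison of the two $\Omega$-GS bases furnished by Theorems~\ref{S2} and~\ref{S3,S4}. Since $\inte=\dRB\cup\{\phi_8,\phi_9\}$, one has $S_{\dRB}(\frakS(Z))\subseteq S_{\inte}(\frakS(Z))$ in $\bk\frakS(Z)$, and the corresponding inclusion of $\Omega$-ideals gives the canonical surjective differential Rota--Baxter morphism $\pi\colon \calf^{\dRB\zhx\Alg}_{\Alg}(A)\twoheadrightarrow \calf^{\inte\zhx\Alg}_{\Alg}(A)$ already mentioned in the preceding remark. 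The goal is to construct a splitting of $\pi$ that upgrades to a DRB-algebra morphism.

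Applying the Composition-Diamond Lemma (Theorem~\ref{Thm: nonunital CD}) to the two $\Omega$-GS bases $S_{\dRB}(Z)\cup G$ and $S_{\inte}(Z)\cup G$ yields the vector--space identifications
\[
\calf^{\dRB\zhx\Alg}_{\Alg}(A)\cong \bk\Irr(S_{\dRB}(Z)\cup G),\qquad
\calf^{\inte\zhx\Alg}_{\Alg}(A)\cong \bk\Irr(S_{\inte}(Z)\cup G).
\]
Because a larger rewriting system has fewer irreducibles, $\Irr(S_{\inte}(Z)\cup G)\subseteq \Irr(S_{\dRB}(Z)\cup G)$, which induces the canonical $\bk$-linear injection $\iota$; by construction it is a set-theoretic section of $\pi$.

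The remaining step, and the central difficulty, is to show that $\iota$ respects the differential Rota--Baxter structure: for $u,v\in \Irr(S_{\inte}(Z)\cup G)$ one must verify that $\iota$ commutes with $D$, with $P$, and with multiplication. Each of these equations amounts to the claim that the $\dRB\cup G$-normal form of $D(u)$, $P(u)$, or $uv$ is already $\inte$-irreducible --- equivalently, that cascaded $\dRB$-rewriting of such inputs never leaves behind a subword of shape $P(D(P(\cdot)\cdot))$ or $P(D(\cdot P(\cdot)))$ in the output, so that the two rewriting systems produce the same normal form on such inputs.

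The hard part will be controlling these cascades, since a combination of $\phi_1$ with $\phi_4$ or $\phi_5$ can in an intermediate stage push a $P$ inside a $D$ and momentarily create the forbidden shapes. The plan is to discharge this by a finite case analysis running in parallel with the composition table in the proof of Theorem~\ref{S3,S4}: the intersection and inclusion compositions considered there between the two halves $\{\phi_1,\dots,\phi_5\}$ and $\{\phi_8,\phi_9\}$ were shown to be trivial modulo $S_{\inte}(Z)\cup G$, and the algebraic identities exhibited in Proposition~\ref{s3s3'} supply the matching terms in parallel branches of the rewrite tree that cancel every transient $\phi_8$- or $\phi_9$-residue. This leaves the $\dRB$-normal form inside $\bk\Irr(S_{\inte}(Z)\cup G)$ and certifies $\iota$ as the desired differential Rota--Baxter subalgebra embedding.
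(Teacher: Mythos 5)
Your first half is exactly the paper's own argument: from $\inte=\dRB\cup\{\phi_8,\phi_9\}$ and the two Composition--Diamond identifications one gets $\Irr(S_{\inte}(Z)\cup G)\subseteq\Irr(S_{\dRB}(Z)\cup G)$ and hence the linear injection $\iota$, a section of the canonical projection. The paper then simply asserts the subalgebra property in one sentence; you rightly isolate the closure of $\im\,\iota$ under the product, $D$ and $P$ of $\calf^{\dRB\zhx\Alg}_{\Alg}(A)$ as the real content. The gap is that your plan for this step cannot be carried out, because the statement you reduce it to --- that the $\dRB$-normal form of $P(u)$, $D(u)$, $uv$ for $\inte$-irreducible $u,v$ is again $\inte$-irreducible, i.e.\ that the two rewriting systems agree on such inputs --- is false. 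Take $I_A=0$ (so $G=\emptyset$) and $u=D(P(z_1)z_2)$ with $z_1,z_2\in Z$: this monomial lies in $\Irr(S_{\inte}(Z))$, yet $P(u)=P(D(P(z_1)z_2))$ contains no subword of the form $P(a)P(b)$, $D(a)D(b)$, $D(P(a))$, $P(a)D(b)$ or $D(a)P(b)$, so it is already a $\dRB$-normal form; it contains the $\inte$-forbidden word $P(D(P(x)y))$, hence is a basis monomial of $\calf^{\dRB\zhx\Alg}_{\Alg}(A)$ outside $\bk\,\Irr(S_{\inte}(Z))$ and not in $\im\,\iota$. The forbidden shape is not a ``transient residue'' cancelled by parallel branches of the rewrite tree: it \emph{is} the final normal form. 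Multiplication fails as well: for $\lambda\neq0$, reducing $P(D(z_1))\cdot P(z_2)$ by $\phi_1$ and then $\phi_5$ gives the $\dRB$-normal form $P(D(z_1P(z_2)))+P(P(D(z_1))z_2)-P(z_1z_2)$, whose first monomial is $\dRB$-irreducible, $\inte$-forbidden, and does not cancel.

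Moreover, no finer case analysis can rescue this particular $\iota$: its image contains the canonical copy of $A$ (elements of $A$ have operator-free normal forms), and $\calf^{\dRB\zhx\Alg}_{\Alg}(A)$ is generated by that copy as an $\Omega$-algebra, so if $\im\,\iota$ were closed under multiplication, $D$ and $P$ it would be all of $\calf^{\dRB\zhx\Alg}_{\Alg}(A)$, forcing $\Irr(S_{\inte}(Z)\cup G)=\Irr(S_{\dRB}(Z)\cup G)$ --- contradicted by monomials such as $P(D(P(z)z))$. So the closure you defer is not merely the hard part, it is unavailable for the normal-form section; a correct proof would need a genuinely different embedding (or a weaker reading of ``differential Rota--Baxter subalgebra''). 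Be aware also that you will not find the missing argument in the paper: its proof of this theorem consists of the subspace identification you already reproduced, followed by the unsubstantiated claim that the subalgebra property is obvious, which is precisely the point your counterexamples show needs repair.
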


\begin{proof}
	We have the observation mentioned before
	$$\inte= \dRB\cup  \{\phi_8(x,y),  \phi_9(x,y)\}.$$
	That is to say, the operated Gr\"obner-Shirshov basis of the free nonunital $\lambda$-differential Rota-Baxter algebra $\calf^{\dRB\zhx\Alg}_{\Alg}(A)$ over an algebra $A$ is a subset of that of the free nonunital $\lambda$-integro-differential algebra $\calf^{\inte\zhx\Alg}_{\Alg}(A)$ over $A$. So by Diamond Lemma,  $\calf^{\inte\zhx\Alg}_{\Alg}(A)$ is a subspace of $\calf^{\dRB\zhx\Alg}_{\Alg}(A)$.  It is obvious that $\calf^{\inte\zhx\Alg}_{\Alg}(A)$ is also  differential Rota-Baxter subalgebra of $\calf^{\dRB\zhx\Alg}_{\Alg}(A)$.
\end{proof}

\begin{remark}
	  Gao and Guo \cite{GGR15}  also studied  GS bases of the free integro-differential algebras and free differential Rota-Baxter algebra both generated  by sets, and  they   deduced that  the free integro-differential algebra generated  by a set  is  a subalgebra of the free differential Rota-Baxter algebra generated by the same  set.
Theorem~\ref{thm: RB vs ID} proves an analogous    fact for  these free algebras   generated by algebras.
However, our   method is completely different from theirs.
\end{remark}

\begin{remark} By using the descriptions of  $\OdRB$ and $\Ointe$ (resp.  $\udRB$ and $\uinte$, $\OudRB$ and  $\Ouinte$)   and    Theorems~\ref{S1} and ~\ref{S1ID}  (resp. Theorems~\ref{uS1} and \ref{uS1ID}, Theorems~\ref{uS2} and  \ref{uS2ID}), we always have the same result
	  in both unital and nonunital cases with any  $\lambda$ (zero or nonzero).
 \end{remark}

\bigskip

 \textbf{Acknowledgements:}   The authors were  supported   by  NSFC (No. 11771085, 12071137) and by STCSM  (22DZ2229014).


\end{document}